\definecolor{green1}{RGB}{0,107,28}
\definecolor{green2}{RGB}{17,85,35}
\definecolor{green3}{RGB}{0,77,20}
\definecolor{green4}{RGB}{33,166,68}
\definecolor{green5}{RGB}{60,166,88}
\definecolor{blue1}{RGB}{3,56,91}
\definecolor{blue2}{RGB}{16,51,73}
\definecolor{blue3}{RGB}{1,40,66}
\definecolor{blue4}{RGB}{35,109,157}
\definecolor{blue5}{RGB}{59,118,157}
\def\Hom{\mathrm{Hom}}
\def\JJJ{\mathrm J}
\def\pd{\partial}
\def\dell{\mathcal D}
\def\ppi{g}
\def\chr{\mathrm{Chain}_R}
\def\crg{\mathrm{Chain}_{R \mathcal G}}
\def\modr{\mathrm{Mod}_R}
\def\Cat{\mathcal C}
\def\XX{X}
\def\DDD{\mathcal D}
\def\GGG{\mathcal G}
\def\UUU{\mathcal U}
\def\TTT{\mathcal T}
\def\PPP{\mathcal P}
\def\GG{G}
\def\Map{\mathrm{Map}}
\def\MMM{\mathcal M}
\def\MU{\MMM_\UUU}
\def\mor{\mathrm{Mor}}
\def\ob{\mathrm{Ob}}
\def\Id{\mathrm{Id}}
\def\NNN{N}
\def\Ord{\mathrm{Ord}}
\def\Mor{\mathrm{Mor}}
\def\Mort{\Mor_{\mathrm{tw}}}
\def\mult{\mu}
\def\KK{K}
\def\last{\mathrm{last}}
\def\Moo{\mathrm{Mo}}
\def\TTT{\mathcal T}
\def\FF{\mathrm{F}}
\newtheorem{thm}{Theorem}[section]
\newtheorem{cor}[thm]{Corollary}
\newtheorem{lem}[thm]{Lemma}
\newtheorem{prop}[thm]{Proposition}
\newtheorem{examp}[thm]{Example}
\newtheorem{rema}[thm]{Remark}
\newtheorem{compl}[thm]{Complement}
\long
\def\MSC#1\EndMSC{\def\arg{#1}\ifx\arg\empty\relax\else
      {\par\narrower\noindent
      2020 Mathematics Subject Classification. #1\par}\fi}
\long
\def\KEY#1\EndKEY{\def\arg{#1}\ifx\arg\empty\relax\else
    {\par\narrower\noindent
      Keywords and Phrases: #1\par}\fi}
\newcommand{\n}[1]{\nobreakdash-\hspace{0pt}}
\newcommand{\ainf}[1]{$A_\infty$\nobreakdash-\hspace{0pt}}
\DeclareMathOperator\Ob{Ob}
\DeclareMathOperator\pr{pr}
\numberwithin{equation}{section}
\long
\def\MSC#1\EndMSC{\def\arg{#1}\ifx\arg\empty\relax\else
      {\par\narrower\noindent
      2020 Mathematics Subject Classification. #1\par}\fi}
\long
\def\KEY#1\EndKEY{\def\arg{#1}\ifx\arg\empty\relax\else
    {\par\narrower\noindent
      Keywords and Phrases: #1\par}\fi}
\title 
 {The breadth of  Berikashvili's functor $\DDD$}
\author{Johannes Huebschmann  }
\address{
\newline
Universit\'e de Lille - Sciences et Technologies 
\\
D\'epartement de Math\'ematiques\\
\newline CNRS-UMR 8524,
Labex CEMPI (ANR-11-LABX-0007-01)
\\
\newline
59655 Villeneuve d'Ascq Cedex, France\\
\newline
Johannes.Huebschmann@univ-lille.fr
 }
\date{\today}
\numberwithin{equation}{section}
\begin{document}
\setcounter{page}{1}

\begin{abstract}
\noindent
We discuss variants of Berikashvili's functor 
that arise in differential homological algebra,
from simplicial bundles,  from ordinary topological bundles,
and in more general categorical settings.
We prove that, under suitable circumstances,
 the value of Berikasvili's functor
parametrizes isomorphism classes of bundles in various contexts.
\end{abstract}
\maketitle

\centerline{\em To the memory of Nodar Berikashvili} 
\bigskip

\MSC

\noindent
Primary: 16E45 

\noindent
Secondary:  18G31, 18G35,  18G50, 18N50, 55R10,  55R20, 55U10
\EndMSC

\KEY
Twisting cochain, twisted tensor products and their classification, 
simplicial principal bundles and their classification,
moduli space of flat connections, first nonabelian cohomology
 \EndKEY
{\tableofcontents}

\section{Introduction}

In \cite{MR0264667},
N. Berikashvili 
introduced the 
functor $\DDD$ 
in terms of \lq\lq twisting elements\rq\rq\  or
\lq\lq twisting cochains\rq\rq\  in a differential graded algebra.
At that time 
twisting cochains already had a history in topology and differential
homological algebra,
see e.~g.
\cite {MR0436178} (where the terminology is \lq\lq twisting morphism\rq\rq)
and the  references there to \cite{MR0105687}
and \cite{MR0042426, MR0042427}.
With hindsight we see that twisting cochains make precise a certain
piece of structure behind the notion of transgression
\cite{MR0042426, MR0042427}.
The original version of Berikashvili's functor $\DDD$ assigns
to a differential graded algebra $\mathcal A$
the set $\DDD(\mathcal A)=\TTT (\mathcal A)/ \GGG $ of orbits
of twisting elements (homogeneous degree $-1$ members $\tau$ of $\mathcal A$ 
that satisfy the identity $d\tau = \tau \tau$ or master equation
relative to the differential $d$ in $\mathcal A$) in 
$\mathcal A$ with respect to the group $\GGG $ of inner automorphisms
of $\mathcal A$. 
For a differential graded algebra $\mathcal A$,
the set $\DDD(\mathcal A)$ 
ressembles  a {\em moduli space of gauge 
equivalence classes of 
flat connections\/}.
Such a moduli 
space is sometimes considered as a first non-abelian coomology space.
The results in the present paper suggest that, in 
the situations we discuss here,
 the value of Berikashvili's functor
could also be viewed as a first non-abelian cohomology set.
Remark \ref{nonabcoh} renders this observation explicit.

The paper \cite{MR1710565}
explains some of  the significance of Berikashvili's functor
within ordinary differential homological algebra
and how it relates to  deformation theory,
and \cite{MR3956729} develops a small aspect of that relationship
further.
Here we discuss variants of Berikashvili's functor in various contexts.

We begin by reviewing
bundles in the category of chain complexes.
Applying Berikashvili's functor
$\DDD$ to 
the differential graded algebra $\Hom(C,A)$ associated to
a differential graded coalgebra $C$ and a differential 
graded algebra $A$
(endowed with the cup or convolution product)
yields  the set $\DDD(C,A)$
of orbits of twisting cochains $\tau \colon C \to A$
relative to the group of inner automorphisms of
the differential graded algebra $\Hom(C,A)$.
We  prove
(Theorem \ref{param1}) 
that, for
a differential graded coalgebra $C$ and a differential 
graded algebra $A$, the set
 $\DDD(C,A)$
parametrizes isomorphism classes of bundles (principal
 twisted tensor products) having $C$ as base and
 $A$ as fiber.
Thereafter,
guided by the idea that, 
for a group $\GG$,
a principal $\GG$-bundle 
admits a characterization
in terms of a certain $\GG$-valued functor
defined on a certain category, cf. \cite{MR0232393},
 we proceed in a more abstract  manner
within the appropriate categorical framework. 
Theorem \ref{thm4} 
says that, 
for a simplicial set $B$ and a simplicial group $\KK$,
the value  $\DDD(B,\KK)$ of Berikash\-vili's functor on the pair $(B,\KK)$
parametrizes isomorphism classes of
simplicial principal $\KK$-bundles 
on $B$.
 Theorem \ref{thm5} establishes the fact
that, for a simplicial set $B$ and a simplicial group $\KK$,
 the assignment
to a twisting function $\rho\colon B \to \KK$ of the  twisting cochain
which $\rho$ determines via the perturbation lemma
yields a map from isomorphism classes of simplicial principal bundles
to the isomorphism classes of the associated twisted tensor products.
Theorem \ref{bericat}
yields the same kind of parametrization result, but  phrased over a category.
This recovers
principal bundles over a simplicial complex
(Example \ref{examp1}) and 
over the simplicial category arising from
an open cover of a manifold  
(Example \ref{examp2}) 
and hence
ordinary principal bundles
over a manifold endowed with a partition of unity subordinate to the
open cover:
A construction in \cite{MR0232393}
assigns to an  open cover $\UUU$ of a manifold $M$
a (topological or smooth) category $\MU$ in such a way that a functor
$F$ from  $\MU$ to a group $\GG$
(viewed as a category in the standard way)
determines a simplicial principal $\GG$-bundle
on the nerve of $\MU$. 
Theorem \ref{bericatG} says that evaluating Berikasvili's functor
$\DDD$ at $\MU$ and $\GG$ yields a set
$\DDD(\MU,\GG)$ which recovers the non-abelian cohomology set
$\mathrm H^1(B \UUU,\GG)$
of equivalence classes of $\GG$-transition functions
relative to the open cover $B \UUU$ of $M$
arising as the  \lq barycentric subdivision\rq\ 
$B \UUU$ of $\UUU$; when 
$M$ admits a partition of unity subordinate to
$B \UUU$, in particular, when $M$ is paracompact,
 by a classical result, this cohomology set
characterizes isomorphism classes of principal $\GG$-bundles
on the  manifold $M$.
Section \ref{tcs} explores Berikashvili's functor in a general 
categorical setting.
This relates Berikashvili's functor to
 $A_{\infty}$-functors and provides, perhaps,
clarification and simplification 
for the theory of dg categories. For example,
the paper \cite{MR2028075} explores dg quotients of dg categories; in
 \cite[p.~687, p.~689]{MR2028075}, the terminology is
\lq Maurer-Cartan functor\rq\  for the
assignment to a coalgebra and algebra of
the family of twisting cochains between the two.
Thus there is a huge
unexplored territory in this area.

\section{Preliminaries}

References for  notation and terminology are
\cite{MR0345115, MR0402769, MR0056295,  MR0065162,  MR112135, 
MR0301736, MR662761, 
MR394720, MR0347946,MR1710565,
MR2640649, MR2762544,
MR2762538,
MR2820385,
MR0365571, 
MR1109665,  MR1932522, MR0436178, MR0413090,  MR350735, MR0258031}.

The ground ring $R$ is a commutative ring with $1$.
We  take {\em chain complex\/} to mean {\em
differential graded\/} $R$-{\em module\/}. A chain 
complex is
not necessarily  concentrated in non-negative or non-positive
degrees. The differential of a chain complex is always 
 of degree $-1$. For a filtered chain complex $X$, a
{\em perturbation\/} of the differential $d$ of $X$ is a
(homogeneous) morphism $\partial$ of the same degree as $d$ such
that $\partial$ lowers  filtration and $(d +
\partial)^2 = 0$ or, equivalently,
\begin{equation}
[d,\partial] + \partial \partial = 0.
\end{equation}
Thus, when $\partial$ is a perturbation on $X$, the sum $d +
\partial$, referred to as the {\em perturbed differential\/},
endows $X$ with a new differential. When $X$ has a graded
coalgebra structure such that $(X,d)$ is a differential graded
coalgebra, and when the {\em perturbed differential\/} $d +
\partial$ is compatible with the graded coalgebra structure, we
refer to $\partial$ as a {\em coalgebra perturbation\/}; the
notion of {\em algebra perturbation\/} is defined similarly. Given
a differential graded coalgebra $C$ and a coalgebra perturbation
$\partial$ of the differential $d$ on $C$,  occasionally we
denote the new or {\em perturbed\/} differential graded coalgebra
by $C_{\partial}$.
Given a differential graded algebra $A$ and an algebra perturbation
$\partial$ of the differential $d$ on $A$,  occasionally we
likewise
denote the new or {\em perturbed\/} differential graded algebra
by $A_{\partial}$.

Given two chain complexes $X$ and $Y$, recall that
$\mathrm{Hom}(X,Y)$ inherits the structure of a chain complex by
the operator $D$ defined by
\begin{equation}
D \phi = d \phi -(-1)^{|\phi|} \phi d
\end{equation}
where $\phi$ is a homogeneous homomorphism from $X$ to $Y$ and
where $|\phi|$ refers to the degree of $\phi$.
The notation $D$ for the Hom-differential
and $\DDD$ for Berikashvili's functor
might be slightly confusing but is, perhaps,
unavoidable, for reasons of consistency with notation established 
in the literature.

A {\em contraction\/}
\begin{equation}
\xymatrix{(M
\ar@<0.5ex>[r]^{\nabla\phantom{}}
&\ar@<0.5ex>[l]^{\ppi\phantom{}}
N,h)
} 
\label{co}
\end{equation}
of chain complexes \cite{MR0056295} consists of

-- chain complexes $N$ and $M$,
\newline
\indent -- chain maps $\ppi\colon N \to M$ and $\nabla \colon M \to
N$,
\newline
\indent --  a morphism $h\colon N \to N$ of the underlying graded
modules of degree 1,
\newline
\noindent subject to
\begin{align}
 \ppi \nabla &= \mathrm{Id},
\label{co0}
\\
Dh &= \nabla \ppi - N, \label{co1}
\\
\ppi h &= 0, \quad h \nabla = 0,\quad hh = 0. \label{side}
\end{align}
It is common to refer to 
the requirements \eqref{side}  as {\em annihilation
properties\/} or {\em side conditions\/}. We say a 
contraction \eqref{co} having $M$ and $N$ filtered chain complexes 
and $\nabla$, $g$, $h$ filtration preserving is {\em filtered\/}.

\begin{rema}
{\rm
Given $M$, $N$, $\ppi$, $\nabla$, $h$, subject to \eqref{co0} and
\eqref{co1} but not necessarily \eqref{side}, substituting
$\ppi - \ppi d h$ and $hdh$ 
for $\ppi$ and $h$, we obtain
a contraction in the strict sense, that is, the
data satisfy \eqref{side} as well.
}
\end{rema}

For later reference, we recall the 
ordinary perturbation lemma.

\begin{lem} 
\label{ordinary} 
Let 
\begin{equation}
\xymatrix{(M
\ar@<0.5ex>[r]^{\nabla\phantom{}}
&\ar@<0.5ex>[l]^{\ppi\phantom{}}
N,h)
} 
\label{266}
\end{equation}
be a filtered contraction. 
Let $\partial$ be a perturbation of the differential
on $N$, and let
\begin{align} \dell &= \sum_{n\geq 0} \ppi\partial (h\partial)^n\nabla =
\sum_{n\geq 0} \ppi(\partial h)^n\partial\nabla
\label{1.1.1}
\\
\nabla_{\partial}&= \sum_{n\geq 0} (h\partial)^n\nabla
\label{1.1.2}
\\
\ppi_{\partial}&= \sum_{n\geq 0} \ppi(\partial h)^n
\label{1.1.3}
\\
h_{\partial}&=\sum_{n\geq 0} (h\partial)^n h =\sum_{n\geq 0}
h(\partial h)^n .
\label{1.1.4}
\end{align}
When the filtrations on $M$ and $N$ are complete, these infinite
series converge, the operator $\dell$ is a perturbation of the differential on
$M$ and, with the notation $N_{\partial}$ and $M_{\dell}$ for  the new
chain complexes,
\begin{equation}
\xymatrix{(M_\dell
\ar@<0.5ex>[r]^{\nabla_\partial\phantom{}}
&\ar@<0.5ex>[l]^{\ppi_\partial\phantom{}}
N_\partial,h_\partial)
} 
\label{2.66}
\end{equation}
constitute a new filtered contraction that is natural in terms of
the given data.
\end{lem}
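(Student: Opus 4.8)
The plan is to establish the four assertions of the lemma---convergence of the series, that $\dell$ is a perturbation of the differential on $M$, that the data \eqref{2.66} satisfy the contraction axioms \eqref{co0}--\eqref{side} with respect to the perturbed differentials, and naturality---in that order, treating the geometric series formally and justifying each rearrangement by the completeness hypothesis.

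First I would settle convergence. Since $\partial$ is a perturbation it lowers filtration, whereas in a filtered contraction $\nabla$, $\ppi$, and $h$ preserve filtration; hence each summand $(h\partial)^n$ and $(\partial h)^n$ lowers filtration by at least $n$, so modulo any fixed filtration stage only finitely many terms of each series act nontrivially, and completeness guarantees that the limits exist. The two displayed expressions for $\dell$, and likewise for $h_\partial$, coincide by the trivial associativity identities $\partial(h\partial)^n = (\partial h)^n\partial$ and $(h\partial)^n h = h(\partial h)^n$.

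Next I would introduce closed-form bookkeeping. Writing $\Sigma = \sum_{n\geq 0}(h\partial)^n$ and $\Theta = \sum_{n\geq 0}(\partial h)^n$, one has $h_\partial = \Sigma h = h\Theta$, $\nabla_\partial = \Sigma\nabla$, $\ppi_\partial = \ppi\Theta$, and $\dell = \ppi\partial\Sigma\nabla = \ppi\Theta\partial\nabla$. Termwise rearrangement, legitimate by convergence, yields the resolvent identities $\Sigma = \mathrm{Id} + h\partial\Sigma = \mathrm{Id} + \Sigma h\partial$ and $\Theta = \mathrm{Id} + \partial h\Theta = \mathrm{Id} + \Theta\partial h$; these are the engine for every subsequent cancellation. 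From them and the original annihilation properties \eqref{side} one reads off at once that $\Theta h = h$ and $h\Sigma = h$ (using $hh=0$), whence the perturbed side conditions $\ppi_\partial h_\partial = 0$, $h_\partial\nabla_\partial = 0$, $h_\partial h_\partial = 0$ follow by telescoping together with $\ppi h = 0$ and $h\nabla = 0$. The identity \eqref{co0} for the perturbed data, $\ppi_\partial\nabla_\partial = \mathrm{Id}$, drops out the same way: applying the resolvent identities to collapse $\nabla_\partial = \nabla + h\partial\nabla_\partial$ and $\ppi_\partial = \ppi + \ppi_\partial\partial h$ and invoking $\ppi h = 0$, $h\nabla = 0$ reduces it to the original $\ppi\nabla = \mathrm{Id}$.

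The genuinely delicate step---the main obstacle---is the homotopy identity \eqref{co1} recomputed for the perturbed differential $d + \partial$ on $N$ and $d + \dell$ on $M$, carried out \emph{simultaneously} with the verification that $\dell$ is a perturbation. Concretely, I would show in one stroke that $(d+\dell)^2 = 0$, equivalently $[d,\dell] + \dell\dell = 0$, and that $D_\partial h_\partial = \nabla_\partial\ppi_\partial - \mathrm{Id}$, where $D_\partial$ is the Hom-differential built from the perturbed differentials. Here one must feed in both the perturbation equation $[d,\partial] + \partial\partial = 0$ on $N$ and the original $Dh = \nabla\ppi - \mathrm{Id}$, and then resum: the cancellations are precisely those that make $\Sigma$ and $\Theta$ behave like the formal inverses $(\mathrm{Id} - h\partial)^{-1}$ and $(\mathrm{Id} - \partial h)^{-1}$, and keeping track of the Koszul signs is the chief bookkeeping burden. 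Finally, naturality is immediate: every perturbed datum is one and the same universal power-series expression in $\nabla$, $\ppi$, $h$, $\partial$, so any morphism of filtered contractions intertwining these ingredients intertwines the four series term by term.
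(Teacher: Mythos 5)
Your proposal is correct and takes essentially the same approach as the paper: the paper itself supplies no argument, deferring to \cite{MR0220273} and \cite[4.3 Lemma]{MR0301736}, and your geometric-series/resolvent verification --- convergence from the filtration-lowering of $\partial$, the collapsing identities $\Theta h = h$ and $h\Sigma = h$ yielding the perturbed side conditions and $\ppi_\partial\nabla_\partial = \mathrm{Id}$, then feeding $[d,\partial]+\partial\partial = 0$ and $Dh = \nabla\ppi - \mathrm{Id}$ into the resummation for $(d+\dell)^2 = 0$ and the perturbed homotopy identity, with naturality from the universal power-series form of the perturbed data --- is precisely the computation carried out in those cited sources. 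Every identity you assert checks out, and you correctly locate the only delicate point (the simultaneous sign-sensitive verification that $\dell$ is a perturbation and that \eqref{co1} holds for the perturbed data).
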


\begin{proof} See \cite{MR0220273} or
\cite[4.3 Lemma  Section 4 p.~404]{MR0301736}.
\end{proof}

Under the circumstances of Lemma \ref{ordinary}, we refer to
\eqref{2.66} as the {\em perturbed contraction\/}.

\begin{rema}
{\rm 
In \eqref{co1}, the
sign of $h$ is the same
as in \cite[ Section 4 p.~403]{MR0301736}
and \cite[Section 1 p.~247]{MR1109665},
opposite to that in \cite[2.2 p.~164]{MR2640649}
(in (2.2) of that paper,  $M$ and $N$ should be  exchanged).
This explains the appearance of the minus sign in
 \cite[(9.2) - (9.5), p.~183]{MR2640649}.
}
\end{rema}
Let $(C,\eta,\Delta)$ be a 
{\em coaugmented\/} differential graded coalgebra
and let 
$\JJJ C = \mathrm{coker}(\eta)$ denote its {\em
coaugmentation\/} coideal. 
 Recall  the counit $\varepsilon \colon C \to R$ and the
coaugmentation map $\eta$ determine a direct sum decomposition $C = R
\oplus \JJJ C$, and the diagonal $\Delta$ 
induces a diagonal $\JJJ \Delta \colon \JJJ C \to \JJJ C \otimes \JJJ C$.
The 
ascending sequence 
\begin{equation}
R \subseteq \ldots \subseteq  \mathrm F_nC\subseteq
\mathrm F_{n+1}C \subseteq \ldots \ (n \geq 1)
\end{equation}
formed by the kernels
\[\mathrm F_nC = \mathrm{ker}(C\stackrel{\pr}\longrightarrow \JJJ C 
\stackrel{(\JJJ \Delta)^{\otimes (n+1)}}\longrightarrow 
(\JJJ C)^{\otimes (n+1)})\  (n \geq 0)
\]
yields the {\em coaugmentation\/} filtration $\{\mathrm
F_nC\}_{n \geq 0}$ of $C$,
cf., e.g.,
\cite[Section 1 p.~11]{MR0347946},
well known to turn
$C$ into a {\em filtered\/} coaugmented differential graded
coalgebra; thus, in particular, $\mathrm F_0C = R$. We recall that
$C$ is said to be {\em cocomplete\/} when $C=\cup \mathrm F_nC$.

Write $s$ for the {\em suspension\/} operator and
accordingly $s^{-1}$ for the {\em desuspension\/} operator. Thus,
given the chain complex $X$, $(sX)_j = X_{j-1}$, etc., and the
differential ${d\colon sX \to sX}$ on the suspended object $sX$ is
defined in the standard manner so that $ds+sd=0$.

Consider  a simplicial $R$-module $W$.
We take its {\em Moore complex\/}
$(\Moo(W),d)$
to be
\begin{equation}
\begin{aligned}
\Moo(W)&\colon \ldots \stackrel{d_{p+1}}\longrightarrow \Moo_p(W)\stackrel{d_p} \longrightarrow \ldots \stackrel{d_1}\longrightarrow \Moo_0(W) = W_0,\ 
\\
\Moo_p(W) &= \cap_{0 \leq j <p} \ker (\pd_j\colon W_p \to W_{p-1}),\ p \geq 1, 
\\
d_p&= \pd_p|_{ \Moo_p(W)},\ p \geq 1.
\end{aligned}
\end{equation}
The canonical injection 
$\iota \colon \Moo(W) \to W$ extends to a contraction
\begin{equation}
 (\negthickspace\xymatrix{\Moo(W)
\ar@<0.5ex>[r]^{\phantom{a}\iota}
&\ar@<0.5ex>[l]^{\phantom{a}\ppi}
{W} ,
h 
}\negthickspace) 
\label{contramoore}
\end{equation}
that is natural in terms of the data,
the kernel of $\ppi$ coincides with the degeneracy subcomplex 
$D(W)$
of $W$, and $\ppi$ induces an isomorphism
$|W| =W/D(W) \to \Moo(W)$
from the normalized $R$-chain complex
$|W|$ onto the Moore complex $\Moo(W)$ of $W$;
see, e.g.,
\cite[Ch. III Theorem 2.1  p.~146]{MR1711612}.
We identify 
$|W|$ with $\Moo(W)$.

\section{Bundles in the category of chain complexes}
\label{bundles}
The classical notion of {\em twisting cochain\/} goes back to \cite{MR0105687};
this notion arises by abstraction from properties of the transgression
\cite{MR0042427, MR0036511, MR42428}. 
The defining equation of a twisting cochain
also occurs in the literature as
{\em master equation\/}; see \cite{MR1932522} and 
the literature there.

Let $C$ be a differential graded coalgebra and $A$
a differential graded algebra.
Write the unit as $\eta \colon R \to A$, the counit as
$\varepsilon \colon C \to R$, the multiplication map
of $A$ as $\mult \colon A \otimes A \to A$,
and the diagonal map
of $C$ as $\Delta \colon C \to C \otimes C$.
The  Hom differential $D$ and {\em cup product\/}
\begin{equation*}
\begin{aligned}
\cup&\colon \Hom(C,A) \otimes \Hom(C,A) \longrightarrow \Hom(C,A),
\\
\alpha \cup \beta&\colon C \stackrel \Delta \longrightarrow C \otimes C
 \stackrel{\alpha \otimes \beta}  \longrightarrow A\otimes A
\stackrel{\mult}  \longrightarrow A, \ \alpha, \beta \colon C \to A,
\end{aligned}
\end{equation*}
\cite[II.1.1 Definition p.~135]{MR0365571}
turn  $\Hom(C,A)$ into a differential graded algebra
having unit the composite $\eta  \varepsilon $.
The cup product has also come to be known as
{\em convolution product\/}
\cite[p.~6]{MR1243637}.
When $C$ is  coaugmented,
 with coaugmentation
$\eta \colon R \to C$,  and $A$  augmented 
with augmentation
$\varepsilon \colon A \to R$,
\begin{equation}
\Hom(C,A) \longrightarrow R,\ \varphi \mapsto 
\varepsilon \varphi  \eta
\end{equation}
is an augmentation map for $\Hom(C,A)$ as a differential graded algebra.

A $(C,A)$-{\em bundle\/}
is a differential graded right $A$-module left $C$-comodule
$N$ together with
an isomorphism $\lambda \colon N \to C \otimes A$
of  graded right $A$-modules and left $C$-comodules, 
differentials being neglected, the right $A$-module and left $C$-comodule
structures on $C \otimes A$ being the extended ones
\cite[II.1.6 p.~137]{MR0365571}.
For a morphism $\phi\colon A_1 \to A_2$ of 
differential graded algebras
and a morphism $\psi \colon C_1 \to C_2$ of 
differential graded coalgebras,
a $(\phi,\psi)$-{\em morphism\/} 
\begin{equation}
(C_1 \otimes A_1, d_1) \longrightarrow (C_2 \otimes A_2, d_2)
\end{equation}
from the $(C_1,A_1)$-bundle 
$(C_1 \otimes A_1, d_1)$
to the $(C_2,A_2)$-bundle $(C_2 \otimes A_2, d_2)$
is a morphism of chain complexes which is, furthermore,
a morphism of differential graded  $A_1$-modules via $\phi\colon A_1 \to A_2$
and of differential graded
$C_2$-comodules via $\psi\colon C_1 \to C_2$.
With the obvious notions of composition of morphisms and identity,
 $(C,A)$-bundles and, more generally, bundles constitute a category.
In particular, isomorphism classes of bundles
are well defined.

For an augmented differential graded algebra $(A,\varepsilon)$,
we say a $(C,A)$-bundle $(C\otimes A, d)$ 
is {\em augmented\/} when
$C \otimes \varepsilon \colon (C \otimes A,d) \to C$
is a morphism of differential graded $C$-comodules.
For a coaugmented differential graded coalgebra $(C,\eta)$,
we say a $(C,A)$-bundle $(C\otimes A, d)$ 
is {\em coaugmented\/} when
$\eta \otimes A \colon A \to (C \otimes A,d)$
is a morphism of differential graded $A$-modules,
and we say
a $(C,A)$-bundle $(C\otimes A, d)$ 
is {\em supplemented\/} 
when it is both augmented and coaugmented.
Now, for a $(\phi,\psi)$-morphism 
$(C_1 \otimes A_1, d_1) \to
(C_2 \otimes A_2, d_2)$
of coaugmented bundles,
the diagram
\begin{equation}
\begin{gathered}
\xymatrix{
A_1\ar[r]^\phi\ar[d]_{\eta \otimes \Id} & A_2 \ar[d]^{\eta \otimes\Id }
\\
(C_1 \otimes A_1, d_1) \ar[r] & 
(C_2 \otimes A_2, d_2)
}\end{gathered}
\end{equation}
is commutative and,
for a $(\phi,\psi)$-morphism 
$(C_1 \otimes A_1, d_1) \to 
(C_2 \otimes A_2, d_2)$
of augmented bundles, the diagram
\begin{equation}
\begin{gathered}
\xymatrix{
(C_1 \otimes A_1, d_1) \ar[r] \ar[d]_{\Id \otimes \varepsilon}& 
(C_2 \otimes A_2, d_2) \ar[d]^{\Id \otimes \varepsilon}
\\
C_1\ar[r]_\psi & C_2
}
\end{gathered}
\end{equation}
is commutative.
In practice, morphisms of
bundles 
arising from simplicial fiber bundles
are  augmented; they are, furthermore, 
coaugmented after a choice of base point of the 
base.

We define
a homogeneous degree $-1$ 
morphism $\tau \colon C \to A$ 
of the underlying graded $R$-modules
to be a
{\em twisting cochain\/}
when 
\begin{equation}
D\tau +\tau \cup \tau=0.
\end{equation}
For a coaugmented differential graded coalgebra
$(C,\eta)$, we say
a twisting cochain $\tau \colon C \to A$ is {\em coaugmented\/}
when $\tau  \eta = 0$ and,
for an augmented differential graded algebra
 $(A,\varepsilon)$,
 we say
a twisting cochain $\tau \colon C \to A$ is {\em augmented\/}
when $ \varepsilon  \tau = 0$.
We refer to a twisting cochain
that is both coaugmented and augmented
as being {\em supplemented\/}.

\begin{rema}
{\rm This definition of a twisting cochain is that in
\cite[Section 2, p.~401]{MR0301736}. 
The terminology in \cite{MR0365571} is \lq twisting morphism\rq\ 
for a supplemented  twisting cochain,
 with  opposite sign, and that in
\cite{zbMATH03921525}
is twisting cochain,
still with  the sign opposite to the present one.
The present sign for a twisting cochain
is most convenient for bundles of the kind $C \otimes A$
(as opposed to those of the kind $A \otimes C$). 
}
\end{rema}

Let $d^\otimes$ denote the tensor product differential.
The cap product
\begin{equation}
\begin{aligned}
\cap &\colon \Hom(C,A) \otimes C \otimes A  \longrightarrow C \otimes A,
\\
\varphi \cap \,
\cdot\,  &\colon 
C \otimes A  \stackrel{\Delta \otimes A}\longrightarrow C \otimes C  \otimes A
\stackrel{C \otimes \varphi \otimes A}\longrightarrow C \otimes A  \otimes A
\stackrel{C \otimes \mult}\longrightarrow C \otimes A, 
\ \varphi \colon C \to A,
\end{aligned}
\label{action1}
\end{equation}
\cite[2.3 Definitions p.~401]{MR0301736}
yields an action of $\Hom(C,A)$
on $C \otimes A$.
For a twisting cochain $\tau \colon C \to A$, let
$d^\tau =  \cap \tau$. Then the sum
\begin{equation}
d^\otimes + d^\tau \colon C \otimes A \to C \otimes A
\end{equation}
is a differential on $C \otimes A$
which, relative to the obvious structures,
 turns $C \otimes A$ into a 
$(C,A)$-bundle.
It is common to denote this $(C,A)$-bundle by 
$C \otimes_\tau A$; we refer to it as an 
$A$-{\em principal twisted tensor product on\/}
$C$.
See, e.g.,
\cite{MR0365571} for details.
By construction, the $A$-principal twisted tensor product
$C \otimes_\tau A$ on $C$ is a  $(C,A)$-bundle
in a canonical manner,
the right $A$-module and left $C$-comodule
structures on $C \otimes A$ being the extended ones.
The terminology in
\cite{MR0301736}
is \lq\lq principal twisted object\rq\rq.
The following reproduces
\cite[2.2 Proposition p.~400]{MR0301736}.

\begin{prop}
\label{gug1}
For a differential graded algebra $A$
and a  differential graded coalgebra $C$,
the assignment to a bundle differential 
$D$ on $C \otimes A$ of the degree $-1$ morphism 
\begin{equation}
\tau_D\colon C 
\stackrel{\Id \otimes \eta} \longrightarrow C \otimes A
\stackrel D\longrightarrow C \otimes A
\stackrel{\varepsilon \otimes \Id} \longrightarrow  A 
\end{equation}
of the underlying graded objects 
establishes, in the general,
 augmented, coaugmented, and supplemented case,
 a bijection between
twisting cochains from $C$ to $A$ and
bundle differentials 
$D$ on $C \otimes A$ in such a way that
$(C\otimes A,D) = C\otimes _{\tau_D}A$.
Thus
any $(C,A)$-bundle structure on $C \otimes A$
is of the kind $C \otimes_\tau A$, for some uniquely determined twisting cochain 
$\tau\colon C \to A$. 
\qed
\end{prop}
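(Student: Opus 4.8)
\emph{The plan} is to realize the asserted bijection as the composite of two elementary correspondences and to reduce the two defining equations to a single algebraic identity for the cap action. Writing $d^\otimes$ for the tensor product differential and $\psi = \Delta \otimes A$ for the extended left $C$-coaction on $C \otimes A$, I would first argue that every bundle differential $D$ on $C \otimes A$ differs from $d^\otimes$ by a map $t = D - d^\otimes$ that is right $A$-linear and left $C$-colinear. Both $D$ and $d^\otimes$ are derivations for the extended right $A$-module structure, so the two Leibniz correction terms cancel in $t(z \cdot a) = t(z)\cdot a$; likewise both are comodule differentials, so subtracting the identities $\psi D = (d_C \otimes \Id + \Id \otimes D)\psi$ and $\psi d^\otimes = (d_C \otimes \Id + \Id \otimes d^\otimes)\psi$ yields $\psi t = (\Id \otimes t)\psi$. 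Thus $t$ is determined by its restriction $\bar t = t(\Id \otimes \eta)\colon C \to C \otimes A$, which by $C$-colinearity is a morphism of left $C$-comodules $(C,\Delta) \to (C \otimes A, \psi)$; since $C \otimes A$ is the cofree $C$-comodule on $A$, such a morphism is in turn determined by the single map $(\varepsilon \otimes \Id)\bar t\colon C \to A$. A direct check shows this map is exactly $\tau_D$, and that the cap product $\cap\,\tau$ is the unique $A$-linear $C$-colinear map with the same associated core $\tau$. Hence $t = \cap\,\tau_D$ and $D = d^\otimes + \cap\,\tau_D$, identifying the underlying bundle with $C \otimes_{\tau_D} A$.

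It then remains to match the two defining conditions. On one side, $(C \otimes A, D)$ is a bundle precisely when $D^2 = 0$; on the other, $\tau_D$ is a twisting cochain precisely when $D\tau_D + \tau_D \cup \tau_D = 0$. The bridge is the fact that $\cap$ exhibits $C \otimes A$ as a differential graded left module over the convolution algebra $(\Hom(C,A), D, \cup)$, which encodes the two (sign-careful) identities $d^\otimes(\cap\,\tau) + (\cap\,\tau)d^\otimes = \cap(D\tau)$ and $(\cap\,\tau)^2 = \cap(\tau \cup \tau)$ for $\tau$ of degree $-1$. Granting these, expanding $D^2 = (d^\otimes + \cap\,\tau_D)^2$ and using $(d^\otimes)^2 = 0$ gives $D^2 = \cap(D\tau_D + \tau_D \cup \tau_D)$. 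Since $\varphi \mapsto \cap\,\varphi$ is injective --- one recovers $\varphi$ as $(\varepsilon \otimes \Id)(\cap\,\varphi)(\Id \otimes \eta)$ --- the equation $D^2 = 0$ holds if and only if $\tau_D$ satisfies the master equation. Together with the round-trip computation $\tau_{d^\otimes + \cap\,\tau} = \tau$ (which follows because $(\varepsilon \otimes \Id)d^\otimes(\Id \otimes \eta) = 0$ while $(\varepsilon \otimes \Id)(\cap\,\tau)(\Id \otimes \eta) = \tau$), this establishes the bijection in the general case.

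For the refined cases I would simply track the relevant structure maps through $D = d^\otimes + \cap\,\tau$. The bundle is coaugmented, i.e.\ $\eta \otimes A\colon A \to (C \otimes A, D)$ is a chain map, exactly when $(\cap\,\tau)(1_C \otimes a) = 0$ for all $a$; since $1_C = \eta(1)$ is grouplike, $\Delta 1_C = 1_C \otimes 1_C$, and this reduces to $\tau(1_C) = 0$, that is $\tau \eta = 0$. Dually, the bundle is augmented, i.e.\ $C \otimes \varepsilon\colon (C \otimes A, D) \to C$ is a chain map, exactly when $(C \otimes \varepsilon)(\cap\,\tau) = 0$; using that $\varepsilon\colon A \to R$ is multiplicative this reduces to $(\Id \otimes (\varepsilon\tau))\Delta = 0$, and applying the counit gives $\varepsilon \tau = 0$. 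The supplemented case is the conjunction of the two. The main obstacle, and the only place genuine computation is required, is verifying the two cap-action identities with the correct Koszul signs; everything else is the formal bookkeeping of free modules and cofree comodules.
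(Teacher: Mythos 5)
Your proof is correct, and it is essentially the argument the paper leaves implicit: Proposition \ref{gug1} is stated there without proof, as a reproduction of Gugenheim's Proposition 2.2 in \cite{MR0301736}, and your verification --- splitting $D = d^\otimes + t$, using right $A$-freeness and left $C$-cofreeness of $C \otimes A$ to identify $t$ with $\tau_D \cap$, invoking the convolution-module identities $d^\otimes(\tau\cap) + (\tau\cap)d^\otimes = (D\tau)\cap$ and $(\tau\cap)(\tau\cap) = (\tau\cup\tau)\cap$ together with the injectivity of $\varphi \mapsto \varphi\cap$ to convert $D^2 = 0$ into the master equation, and then tracking $\eta$ and $\varepsilon$ through $D = d^\otimes + \tau\cap$ for the augmented, coaugmented, and supplemented refinements --- is exactly the standard computation behind that citation. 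The only outstanding work is the routine Koszul-sign check you already flag, so there is no gap.
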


As before, let $C$ be a differential graded coalgebra and $A$
a differential graded algebra.
Consider two twisting cochains $\tau_1,\tau_2 \colon C \to A$.
A homogeneous  morphism
$h \colon C \to A$ 
of the underlying graded modules
having degree zero
is a {\em homotopy of twisting cochains\/}
from $\tau_1$ to $\tau_2$, written
$h \colon \tau_1 \simeq \tau_2$, 
when
\begin{equation}
\tau_2\cup h = h \cup \tau_1  - D h.
\label{htw}
\end{equation}
In the augmented case,
a homotopy $h \colon \tau_1 \simeq \tau_2$
of twisting cochains is {\em augmented\/}
when $\varepsilon  h = \varepsilon$, in the coaugmnented case,
a homotopy $h$
of twisting cochains is {\em coaugmented\/}
when  $ h  \eta = \eta$ and, in the supplemented case,
a homotopy $h$
of twisting cochains is {\em supplemented\/}
when it is both
augmented and coaugmented.
An augmented  homotopy $h$  necessarily satisfies
the identity $\varepsilon  (Dh) = 0$ and
a coaugmented  homotopy $h$  satisfies
the identity $(Dh) \eta = 0$.

For $h\colon \tau_1 \simeq \tau_2\colon C \to A$,
the resulting identity
\begin{align*}
(Dh)\cap &= (h \cup \tau_1) \cap-(\tau_2\cup h) \cap
= h \cap \tau_1 \cap- \tau_2\cap h \cap
\end{align*}
says that the diagram
\begin{equation}
\begin{gathered}
\xymatrix{
C\otimes A \ar[d]_{d^\otimes+\tau_1\cap} \ar[r]^{h \cap}& C\otimes A 
\ar[d]^{d^\otimes +\tau_2 \cap}
\\
C\otimes A \ar[r]_{h \cap}& C\otimes A 
}
\end{gathered}
\end{equation}
is commutative. 
Hence $h \cap\colon C\otimes_{\tau_1} A \to
 C\otimes_{\tau_2} A$ is an $(\Id,\Id)$-morphism
of bundles.

\begin{rema}
{\rm
In \cite{zbMATH03921525},
the defining 
identity 
of a homotopy of twisting cochains
$k \colon \tau_1 \cong \tau_2$ reads 
\begin{equation}
\tau_1 \cup k =k \cup \tau_2 +Dk .
\end{equation}
}
\end{rema}

The following is again straightforward.
\begin{prop}
\label{gug3}
For a differential graded algebra $A$,
a  differential graded coalgebra $C$,
and two twisting cochains   $\tau_1, \tau_2\colon C \to A$,
the assignment to an $(\Id,\Id)$-bundle morphism  
\begin{equation}
\Psi \colon C \otimes_{\tau_1} A \to  C \otimes_{\tau_2} A
\end{equation} 
of the degree $0$ morphism 
\begin{equation}
h_\Psi\colon C 
\stackrel{\Id \otimes \eta} \longrightarrow C \otimes A
\stackrel \Psi\longrightarrow C \otimes A
\stackrel{\varepsilon \otimes \Id} \longrightarrow  A 
\end{equation}
of the underlying graded objects 
establishes, in the general,
 augmented, coaugmented, and supplemented case, a bijection between
homotopies of twisting cochains  from $\tau_1$ to $\tau_2$ and
 $(\Id,\Id)$-bundle morphisms
in such a way that
\begin{equation}
\Psi = h_\Psi \cap \colon  
C \otimes_{\tau_1} A
 \longrightarrow C \otimes_{\tau_2} A. 
\end{equation}
Thus
any  $(\Id,\Id)$-bundle morphism from
$C \otimes_{\tau_1} A$ to
$C \otimes_{\tau_2} A$
is of the kind 
\begin{equation*}
h\cap\colon C \otimes_{\tau_1} A
 \longrightarrow C \otimes_{\tau_2} A, 
\end{equation*}
for some uniquely determined 
homotopy $h \colon \tau_1 \simeq \tau_2$
of twisting cochains. \qed
\end{prop}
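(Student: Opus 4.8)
The plan is to exhibit the assignment $\Psi\mapsto h_\Psi$ and the assignment $h\mapsto h\cap$ as mutually inverse bijections; the latter has already been shown, in the discussion preceding the statement, to carry a homotopy $h\colon\tau_1\simeq\tau_2$ to an $(\Id,\Id)$-bundle morphism $h\cap\colon C\otimes_{\tau_1}A\to C\otimes_{\tau_2}A$. This runs parallel to Proposition~\ref{gug1}, where bundle differentials were matched with twisting cochains; here degree $0$ bundle morphisms get matched with homotopies of twisting cochains.

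First I would dispose of the easy composite. For any homogeneous degree $0$ morphism $\varphi\colon C\to A$, unravelling the cap product \eqref{action1} and applying the counit and unit axioms gives
\begin{equation*}
(\varepsilon\otimes\Id)\,(\varphi\cap)\,(\Id\otimes\eta)=\varphi,
\end{equation*}
because $(\varphi\cap)(c\otimes 1)=\sum c'\otimes\varphi(c'')$ and $\varepsilon$ then collapses the first tensor factor. In particular $h_{h\cap}=h$, so $\Psi\mapsto h_\Psi$ is a left inverse of $h\mapsto h\cap$; equivalently, the operator assignment $\varphi\mapsto\varphi\cap$ is injective (it admits the displayed left inverse, in all degrees up to the relevant Koszul sign).

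It remains to see that $\Psi\mapsto h_\Psi$ is also a right inverse, i.e.\ that every $(\Id,\Id)$-bundle morphism $\Psi$ equals $h_\Psi\cap$ and that $h_\Psi$ is a genuine homotopy of twisting cochains. The first assertion is a representability statement: any morphism $\Psi\colon C\otimes A\to C\otimes A$ of graded right $A$-modules and left $C$-comodules is a cap product. This is where the real work lies. Right $A$-linearity shows that $\Psi$ is determined by its restriction $\overline{\Psi}=\Psi(\Id\otimes\eta)\colon C\to C\otimes A$, since $C\otimes A$ is free over $A$ on $C\otimes 1$; left $C$-colinearity makes $\overline{\Psi}$ a morphism of left $C$-comodules into the extended (cofree) comodule $C\otimes A$, whence by the universal property of the latter $\overline{\Psi}=(C\otimes h_\Psi)\Delta$ with $h_\Psi=(\varepsilon\otimes\Id)\overline{\Psi}$; extending $A$-linearly then yields $\Psi=h_\Psi\cap$. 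I expect this cofreeness argument to be the main obstacle, the surrounding reductions being formal.

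Finally I would translate the chain-map condition on $\Psi=h_\Psi\cap$ into the homotopy identity \eqref{htw} on $h=h_\Psi$. Exactly as in the discussion preceding the statement, the commutativity of the defining square of a bundle morphism is equivalent, using that $\cap$ is an action of the differential graded algebra $\Hom(C,A)$ compatible with the differentials (so that $(\varphi\cup\psi)\cap=(\varphi\cap)(\psi\cap)$ and $(Dh)\cap=d^\otimes(h\cap)-(h\cap)d^\otimes$), to the single identity
\begin{equation*}
(Dh)\cap=(h\cup\tau_1-\tau_2\cup h)\cap .
\end{equation*}
Now injectivity of $\varphi\mapsto\varphi\cap$, established in the second step, lets me cancel the cap and recover $\tau_2\cup h=h\cup\tau_1-Dh$, which is \eqref{htw}; conversely this identity was already seen to make $h\cap$ a chain map. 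This settles the general case. The augmented, coaugmented, and supplemented refinements then follow by the same arguments once one checks that the supplementary constraints on $\Psi$ (commutation with $\Id\otimes\varepsilon$, respectively with $\eta\otimes\Id$) correspond under $\Psi\mapsto h_\Psi$ to the conditions $\varepsilon h=\varepsilon$, respectively $h\eta=\eta$, defining augmented and coaugmented homotopies.
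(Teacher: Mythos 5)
Your proof is correct and follows exactly the route the paper intends: Proposition \ref{gug3} is stated there with a \lq\lq straightforward\rq\rq\ remark and no printed proof, the forward direction (that a homotopy $h\colon\tau_1\simeq\tau_2$ yields the bundle morphism $h\cap$) being contained in the displayed computation preceding the statement. Your filling-in of the converse --- freeness of $C\otimes A$ over $A$ plus cofreeness of the extended comodule to force $\Psi=h_\Psi\cap$, then cancelling $\cap$ via its left inverse $(\varepsilon\otimes\Id)(\,\cdot\,)(\Id\otimes\eta)$ to recover \eqref{htw}, with the evident checks in the augmented, coaugmented, and supplemented cases --- is exactly the intended verification, and all steps are sound.
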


\begin{prop}
\label{gug2}
Let $A_1,A_2$ be differential graded algebras,
 $C_1,C_2$ differential graded coalgebras,
$\tau_1\colon C_1 \to A_1$ and $\tau_2\colon C_2 \to A_2$
twisting cochains,
  $\phi\colon A_1 \to A_2$ a morphism of augmented
differential graded algebras,
 $\chi \colon C_1 \to C_2$ a morphism of coaugmented
differential graded coalgebras,
and $h \colon \phi  \tau_1 \simeq \tau_2  \chi: C_1 \to A_2$.
Then the  composite
\begin{equation}
[\chi,h,\phi]\colon 
C_1\otimes_{\tau_1} A_1 \stackrel{\Id \otimes \phi}\longrightarrow 
C_1\otimes_{\phi  \tau_1} A_2\stackrel {h \cap}
\longrightarrow 
C_1\otimes_{ \tau_2  \chi} A_2
\stackrel{\chi \otimes \Id }\longrightarrow 
C_2\otimes_{\tau_2} A_2
\label{compo1}
\end{equation}
is a $(\phi,\chi)$-morphism  of bundles, and every  
 $(\phi,\chi)$-morphism 
$C_1\otimes_{\tau_1} A_1 \to C_2\otimes_{\tau_2} A_2$
of bundles arises in this manner from a suitable
homotopy $ \phi  \tau_1 \simeq \tau_2  \chi: C_1 \to A_2$
of twisting cochains.
This claim holds as well in the augmented, cooaugmented, and 
supplemented case.
\end{prop}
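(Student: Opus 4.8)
The plan is to establish the two assertions — that the composite \eqref{compo1} is a $(\phi,\chi)$-morphism, and that every $(\phi,\chi)$-morphism arises this way — by reducing everything to the three preceding propositions, which handle the building blocks. First I would verify that \eqref{compo1} is well defined and of the correct type. The middle arrow $h\cap$ makes sense precisely because $h\colon \phi\tau_1 \simeq \tau_2\chi$ is a homotopy between the two twisting cochains $\phi\tau_1, \tau_2\chi \colon C_1 \to A_2$, so by the discussion preceding Proposition~\ref{gug3} it is an $(\Id,\Id)$-bundle morphism $C_1\otimes_{\phi\tau_1}A_2 \to C_1\otimes_{\tau_2\chi}A_2$. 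The outer arrows are the maps induced on twisted tensor products by $\phi$ and $\chi$ respectively; I would check that $\Id\otimes\phi\colon C_1\otimes_{\tau_1}A_1 \to C_1\otimes_{\phi\tau_1}A_2$ is a $(\phi,\Id)$-morphism and that $\chi\otimes\Id\colon C_1\otimes_{\tau_2\chi}A_2 \to C_2\otimes_{\tau_2}A_2$ is an $(\Id,\chi)$-morphism. These are the naturality statements for the twisted differential: since $\phi$ is an algebra map one has $(\Id\otimes\phi)(d^\otimes + \tau_1\cap) = (d^\otimes + (\phi\tau_1)\cap)(\Id\otimes\phi)$, and dually for $\chi$ using that $\chi$ is a coalgebra map so that $(\chi\otimes\Id)$ intertwines $\tau_2\chi\cap$ and $\tau_2\cap$. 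Composing a $(\phi,\Id)$-, an $(\Id,\Id)$-, and an $(\Id,\chi)$-morphism yields a $(\phi,\chi)$-morphism, which gives the first claim.

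For the converse, let $F\colon C_1\otimes_{\tau_1}A_1 \to C_2\otimes_{\tau_2}A_2$ be an arbitrary $(\phi,\chi)$-morphism. The strategy is to strip off the outer factors so as to land in the situation governed by Proposition~\ref{gug3}. I would first factor $F$ through $\Id\otimes\phi$ and $\chi\otimes\Id$: because $F$ is a morphism of $A_1$-modules via $\phi$ and of $C_2$-comodules via $\chi$, and because the underlying graded objects are free modules (resp. cofree comodules) on $C_i$ and $A_i$, the map $F$ is determined by its \lq\lq matrix coefficient\rq\rq\ $C_1 \to A_2$ obtained by composing with $\Id\otimes\eta$ on the left and $\varepsilon\otimes\Id$ on the right, exactly as in Propositions~\ref{gug1} and~\ref{gug3}. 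Concretely, I would show there is a unique $(\Id,\Id)$-morphism $\Psi\colon C_1\otimes_{\phi\tau_1}A_2 \to C_1\otimes_{\tau_2\chi}A_2$ such that $F = (\chi\otimes\Id)\,\Psi\,(\Id\otimes\phi)$, which amounts to checking that the composite $(\chi\otimes\Id)^{-1}$-style adjustment of $F$ respects the $A_2$-module and $C_1$-comodule structures — here the fact that $\chi$ and $\phi$ are the given fixed maps means the residual morphism is genuinely $(\Id,\Id)$.

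Once $F$ is written as $(\chi\otimes\Id)\,\Psi\,(\Id\otimes\phi)$ with $\Psi$ an $(\Id,\Id)$-morphism of bundles over $C_1$ with fiber $A_2$, Proposition~\ref{gug3} applies directly: it produces a unique homotopy $h = h_\Psi\colon \phi\tau_1 \simeq \tau_2\chi$ of twisting cochains with $\Psi = h\cap$, and substituting back gives $F = [\chi,h,\phi]$ in the notation of \eqref{compo1}. The augmented, coaugmented, and supplemented variants follow by the same argument, invoking the corresponding cases of Propositions~\ref{gug1} and~\ref{gug3} and observing that the extra conditions ($\varepsilon h = \varepsilon$, $h\eta = \eta$) on the homotopy correspond to the compatibility of $F$ with the augmentation and coaugmentation maps, which are preserved by $\Id\otimes\phi$ and $\chi\otimes\Id$ since $\phi$ and $\chi$ respect these structures. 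The main obstacle I anticipate is the converse direction, specifically making rigorous the factorization of an arbitrary $F$ through the two \lq\lq structural\rq\rq\ maps $\Id\otimes\phi$ and $\chi\otimes\Id$: one must be careful that $\Id\otimes\phi$ need not be injective nor $\chi\otimes\Id$ surjective, so the cleanest route is not to invert these maps but to define $h_F$ directly by $h_F = (\varepsilon\otimes\Id)\,F\,(\Id\otimes\eta)\colon C_1\to A_2$ and then verify, by a computation parallel to the one preceding Proposition~\ref{gug3}, that $h_F$ is a homotopy $\phi\tau_1\simeq\tau_2\chi$ and that $[\chi,h_F,\phi] = F$; this sidesteps any invertibility issue and reduces the whole proposition to the bijection already recorded in Proposition~\ref{gug3}.
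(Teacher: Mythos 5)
Your proposal is correct and follows essentially the same route as the paper: the paper's proof simply factors an arbitrary $(\phi,\chi)$-morphism as $(\chi\otimes\Id)\circ\Psi\circ(\Id\otimes\phi)$ and invokes Proposition~\ref{gug3} to identify the middle $(\Id,\Id)$-morphism as $h\cap$ for a unique homotopy $\phi\tau_1\simeq\tau_2\chi$, exactly as you do. Your closing observation --- that since $\Id\otimes\phi$ need not be injective nor $\chi\otimes\Id$ surjective, one should define $h_F=(\varepsilon\otimes\Id)\,F\,(\Id\otimes\eta)$ directly and verify the homotopy identity by the computation preceding Proposition~\ref{gug3} --- is a sound way of making rigorous the factorization step that the paper asserts without further detail.
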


\begin{proof}
Any  $(\phi,\chi)$-morphism 
 \begin{equation*}
C_1\otimes_{\tau_1} A_1 \longrightarrow 
C_2\otimes_{\tau_2} A_2
\end{equation*}
factors as
 \begin{equation*}
C_1\otimes_{\tau_1} A_1 \stackrel{\Id \otimes \phi}\longrightarrow 
C_1\otimes_{\phi  \tau_1} A_2
\longrightarrow 
C_1\otimes_{ \tau_2  \chi} A_2
\stackrel{\chi \otimes \Id }\longrightarrow 
C_2\otimes_{\tau_2} A_2  . 
\end{equation*}
By Proposition \ref{gug3}, the middle bundle morphism
is of the kind
\begin{equation*}
C_1\otimes_{\phi  \tau_1} A_2\stackrel {h \cap}
\longrightarrow 
C_1\otimes_{ \tau_2  \chi} A_2,
\end{equation*}
for a unique homotopy 
$ \phi  \tau_1 \simeq \tau_2  \chi: C_1 \to A_2$
of twisting cochains.
\end{proof}

When a homotopy $h\colon \tau_1 \simeq \tau_2$,
viewed as a homogeneous 
degree zero member of the algebra $(\Hom(C,A), \cup)$, is invertible,
\eqref{htw} is equivalent to
\begin{equation}
\tau_2= h \cup \tau_1 \cup h^{-1} - (D h) \cup h^{-1}.
\label{htw2}
\end{equation}

As before, let $C$ be a differential graded coalgebra and $A$ a 
differential graded algebra.
Let $\TTT(C,A)$ denote the set of twisting cochains from $C$ to $A$.
The invertible members $\varphi$ of  $\Hom(C,A)$ 
(invertible degree zero morphisms of the underlying graded $R$-modules)
form a group $\GGG(C,A)$, and a straightforward verification shows that
the association
\begin{equation}
\begin{aligned}
\GGG(C,A) \times \Hom(C,A) &\longrightarrow \Hom(C, A),
\\
(\varphi,\rho) & \mapsto \varphi* \rho = \varphi \cup \rho \cup \varphi^{-1}
- (D\varphi) \varphi^{-1}
\end{aligned}
\end{equation}
yields an action of $\GGG(C,A)$ on $\TTT(C,A)$.

The assignment to $(C,A)$ of the $\GGG(C,A)$-orbits
$\DDD(C,A)= \TTT(C,A)/\GGG(C,A)$ 
is Berikashvili's functor under the present circumstances.
In the same vein, in the augmented, coaugmented, and supplemented  case,
let
 $\TTT_{\mathrm{aug}}(C,A)$, $\TTT_{\mathrm{coaug}}(C,A)$,
$\TTT_{\mathrm{supp}}(C,A)$,
denote the set of, respectively, 
augmented, coaugmented, supplemented
twisting cochains
and  let
\begin{align*}
\GGG_{\mathrm{aug}}(C,A)&=\{\varphi; \varepsilon \varphi = \varepsilon \}\subseteq \GGG(C,A),
 &\DDD_{\mathrm{aug}}(C,A)
&
=\TTT_{\mathrm{aug}}(C,A)/\GGG_{\mathrm{aug}}(C,A),
\\
\GGG_{\mathrm{coaug}}(C,A)&=\{\varphi;  \varphi \eta = \eta\}\subseteq \GGG(C,A),
&
\DDD_{\mathrm{coaug}}(C,A)
&
=\TTT_{\mathrm{coaug}}(C,A)/\GGG_{\mathrm{coaug}}(C,A),
\\
\GGG_{\mathrm{supp}}(C,A)&
=\{\varphi; \varepsilon \varphi = \varepsilon, \varphi \eta = \eta
\}\subseteq \GGG(C,A), 
&\DDD_{\mathrm{supp}}(C,A)
&
=\TTT_{\mathrm{supp}}(C,A)/\GGG_{\mathrm{supp}}(C,A).
\end{align*}

\begin{thm}
\label{param1}
For a  differential graded coalgebra 
 $C$ and a differential graded algebra $A$,
the assignment to a twisting cochain $t \colon C \to A$
of  the $A$-principal twisted tensor product
$C \otimes_t A$ on $C$
induces a bijection between $\DDD(C,A)$,
$\DDD_{\mathrm{aug}}(C,A)$,
$\DDD_{\mathrm{coaug}}(C,A)$,
$\DDD_{\mathrm{supp}}(C,A)$
and
isomorphism classes of, respectively,
general, augmented, coaugmented, and supplemented
$(C,A)$-bundles.
Thus the value $\DDD(C,A)$ of  Berikashvili's functor
$\DDD$ on $(C,A)$ 
parametrizes isomorphism classes of
$(C,A)$-bundles.
\end{thm}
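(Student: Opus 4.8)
The plan is to fold Propositions~\ref{gug1} and~\ref{gug3} into a single dictionary: the passage $\tau\mapsto C\otimes_\tau A$ should convert the gauge action of $\GGG(C,A)$ on $\TTT(C,A)$ into isomorphism of bundles. Concretely I would establish that the assignment descends to $\GGG$-orbits (well-definedness), is injective, and is surjective on isomorphism classes; the augmented, coaugmented and supplemented statements will then come out of the very same argument restricted to the relevant subgroups. Throughout, an isomorphism of $(C,A)$-bundles means an invertible $(\Id,\Id)$-morphism, so that Proposition~\ref{gug3} applies verbatim. The conceptual heart is the remark already encoded in~\eqref{htw2}: for an invertible $\varphi$, the gauge-transformed cochain $\varphi\ast\tau_1=\varphi\cup\tau_1\cup\varphi^{-1}-(D\varphi)\varphi^{-1}$ equals $\tau_2$ exactly when $\varphi\colon\tau_1\simeq\tau_2$ is an invertible homotopy of twisting cochains.

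For well-definedness I would take $\tau_1,\tau_2$ in one $\GGG(C,A)$-orbit, say $\tau_2=\varphi\ast\tau_1$ with $\varphi$ invertible, and read~\eqref{htw2} as saying that $\varphi\colon\tau_1\simeq\tau_2$ is a homotopy; Proposition~\ref{gug3} then furnishes the $(\Id,\Id)$-morphism $\varphi\cap\colon C\otimes_{\tau_1}A\to C\otimes_{\tau_2}A$. Since the action is a group action, $\varphi^{-1}\ast\tau_2=\tau_1$, so $\varphi^{-1}\colon\tau_2\simeq\tau_1$ is a homotopy as well; as $\varphi^{-1}\cup\varphi=\varphi\cup\varphi^{-1}=\eta\varepsilon$, the composites $(\varphi^{-1}\cap)(\varphi\cap)=(\eta\varepsilon)\cap=\Id$ and its reverse exhibit $\varphi\cap$ as an isomorphism, placing $C\otimes_{\tau_1}A$ and $C\otimes_{\tau_2}A$ in one class. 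Injectivity runs this backwards: a bundle isomorphism $\Psi\colon C\otimes_{\tau_1}A\to C\otimes_{\tau_2}A$ is written $\Psi=h\cap$ and $\Psi^{-1}=k\cap$ for homotopies $h\colon\tau_1\simeq\tau_2$, $k\colon\tau_2\simeq\tau_1$ by Proposition~\ref{gug3}. Because the cap action is faithful---one recovers $\varphi$ from $\varphi\cap$ via the retraction $(\varepsilon\otimes\Id)(\varphi\cap)(\Id\otimes\eta)=\varphi$ that underlies Propositions~\ref{gug1} and~\ref{gug3}---the relations $\Psi^{-1}\Psi=\Id$, $\Psi\Psi^{-1}=\Id$ force $k\cup h=\eta\varepsilon=h\cup k$, so $h$ is invertible in $(\Hom(C,A),\cup)$. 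Feeding this $h$ into~\eqref{htw2} gives $\tau_2=h\ast\tau_1$, whence $\tau_1$ and $\tau_2$ determine the same class in $\DDD(C,A)$.

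Surjectivity is then immediate from the definition of a bundle together with Proposition~\ref{gug1}. A $(C,A)$-bundle $N$ comes with an isomorphism $\lambda\colon N\to C\otimes A$ of graded right $A$-modules and left $C$-comodules; transporting the differential of $N$ along $\lambda$ equips $C\otimes A$ with a bundle differential $D$, and $\lambda$ becomes an $(\Id,\Id)$-isomorphism $N\to(C\otimes A,D)$. Proposition~\ref{gug1} identifies $(C\otimes A,D)=C\otimes_{\tau_D}A$ for a unique twisting cochain $\tau_D$, so $N\cong C\otimes_{\tau_D}A$ and the class of $N$ lies in the image.

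For the augmented, coaugmented and supplemented variants I would repeat the three steps, replacing $\GGG(C,A)$ by $\GGG_{\mathrm{aug}}(C,A)$, $\GGG_{\mathrm{coaug}}(C,A)$ or $\GGG_{\mathrm{supp}}(C,A)$ and invoking the correspondingly decorated clauses of Propositions~\ref{gug1} and~\ref{gug3}. The one point deserving care---and the main, if modest, obstacle---is to confirm that invertibility never leaves the relevant subgroup. Here I would observe that $\varphi\mapsto\varepsilon\varphi$ and $\varphi\mapsto\varphi\eta$ are multiplicative for the convolution product (since $\varepsilon$ is an algebra map and $\eta$ a coalgebra map), so the conditions $\varepsilon\varphi=\varepsilon$ and $\varphi\eta=\eta$ cut out subgroups and are inherited by $\varphi^{-1}$. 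Granting this, the homotopy $h$ realizing an augmented (resp. coaugmented) isomorphism is invertible through augmented (resp. coaugmented) homotopies, and the orbit argument above carries over unchanged. The remaining checks---that $\varphi\cap$ really is an $(\Id,\Id)$-morphism and that composition of cap-actions matches the convolution product---are the routine verifications already subsumed in Propositions~\ref{gug1}--\ref{gug3}, which I would simply cite.
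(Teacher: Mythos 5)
Your proposal is correct and follows essentially the same route as the paper, which likewise deduces well-definedness and injectivity from Proposition~\ref{gug3} (via the gauge/invertible-homotopy dictionary~\eqref{htw2}) and surjectivity from Proposition~\ref{gug1}. The paper's proof is a two-line sketch; your version merely supplies the details it leaves implicit (faithfulness of the cap action, multiplicativity of $(\varphi\cup\psi)\cap=\varphi\cap\circ\psi\cap$, and closure of the decorated subgroups under convolution inverses), all of which check out.
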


\begin{proof}
Proposition \ref{gug3}
implies that  homotopic twisting cochains determine isomorphic
$(C,A)$-bundles, that is,
the map from 
$\DDD(C,A)$
to the set of isomorphism classes
of
$(C,A)$-bundles
is well defined and that, furthermore,
 this map 
is injective.
 Proposition \ref{gug1}
implies that this map 
is surjective.
\end{proof}

\begin{prop}
For   a cocomplete coaugmented differential graded coalgebra $(C,\eta)$ 
and a differential graded algebra $A$,
a degree zero morphism
$h \colon C \to A$ such that $h \eta = \eta \colon R \to A$
is invertible, i.e., belongs to $\GGG_{\mathrm{coaug}}(C,A)$.
\end{prop}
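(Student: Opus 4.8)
The plan is to show that $h$ is invertible in the convolution algebra $(\Hom(C,A),\cup)$, whose unit is the composite $\mathbf 1=\eta\varepsilon\colon C\to R\to A$; since $h\eta=\eta$ is assumed, invertibility then places $h$ in $\GGG_{\mathrm{coaug}}(C,A)$ by definition. First I would pass to the reduced morphism $\bar h=h-\eta\varepsilon$. The hypothesis $h\eta=\eta$ together with $\varepsilon\eta=\Id_R$ gives $\bar h\,\eta=h\eta-\eta\varepsilon\eta=\eta-\eta=0$, so $\bar h$ annihilates the coaugmentation, i.e.\ $\bar h$ vanishes on $\mathrm F_0C=R\subseteq C$. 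Via the splitting $C=R\oplus\JJJ C$ this means $\bar h$ factors as $\bar h=(\bar h|_{\JJJ C})\circ\pr$ through the projection $\pr\colon C\to\JJJ C$.

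Next I would establish the key nilpotency estimate: for every $n\ge 1$ the $n$-fold cup power $\bar h^{\cup n}$ vanishes on $\mathrm F_{n-1}C$. Writing $\bar h^{\cup n}=\mu^{[n]}\circ\bar h^{\otimes n}\circ\Delta^{[n]}$ in terms of the iterated product $\mu^{[n]}\colon A^{\otimes n}\to A$ and the iterated diagonal $\Delta^{[n]}\colon C\to C^{\otimes n}$, the fact that $\bar h$ kills $R$ forces every tensor factor involving the coaugmentation to drop out, so that $\bar h^{\otimes n}\circ\Delta^{[n]}$ factors through the composite $\pr^{\otimes n}\circ\Delta^{[n]}\colon C\to(\JJJ C)^{\otimes n}$, which is precisely the $(n-1)$-fold iterate of the reduced diagonal $\JJJ\Delta$ applied after $\pr$. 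By the very definition of the coaugmentation filtration, $\mathrm F_{n-1}C$ is the kernel of this map into $(\JJJ C)^{\otimes n}$; hence $\bar h^{\cup n}$ vanishes on $\mathrm F_{n-1}C$, as claimed.

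Finally I would invoke cocompleteness. Since $C=\bigcup_n\mathrm F_nC$, each $c\in C$ lies in some $\mathrm F_NC$, and then $\bar h^{\cup n}(c)=0$ for all $n>N$; consequently the series $\psi=\sum_{n\ge 0}(-1)^n\bar h^{\cup n}$ (with $\bar h^{\cup 0}=\mathbf 1$) converges pointwise to a well-defined degree zero morphism $\psi\colon C\to A$. A telescoping computation, valid because on each $c$ only finitely many terms are nonzero, yields $h\cup\psi=(\mathbf 1+\bar h)\cup\psi=\mathbf 1=\psi\cup h$, so $\psi$ is a two-sided inverse of $h$. Thus $h$ is an invertible member of $\Hom(C,A)$, i.e.\ $h\in\GGG(C,A)$, and since $h\eta=\eta$ we conclude $h\in\GGG_{\mathrm{coaug}}(C,A)$.

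The hard part will be the nilpotency estimate of the second paragraph: one must carefully check that annihilation of the coaugmentation by $\bar h$ converts cup powers into iterated \emph{reduced} diagonals, and then match the resulting index $n$ with the correct level $n-1$ of the coaugmentation filtration as defined in the preliminaries. Everything else — the factorization through $\pr$, the telescoping identity, and the pointwise convergence furnished by cocompleteness — is then routine.
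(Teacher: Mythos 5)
Your proposal is correct and follows essentially the same route as the paper's proof: decompose $h=\eta\varepsilon+\widetilde h$ with $\widetilde h\,\eta=0$, observe that $\widetilde h^{\cup n}$ vanishes on $\mathrm F_{n-1}C$, and use cocompleteness to make the geometric series $\sum_{n\geq 0}(-\widetilde h)^{\cup n}$ a well-defined two-sided convolution inverse. The only difference is one of detail: the paper merely asserts the nilpotency estimate, whereas you spell out its verification via the factorization of cup powers through the iterated reduced diagonal, which is a welcome elaboration rather than a departure.
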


\begin{proof}
Write
$h = \eta  \varepsilon  + \widetilde h\colon C \to A$ such that  
$\widetilde h  \eta=0$.
Then
$h^{-1} =\eta  \varepsilon
+\sum_{j \geq 1}  (-  \widetilde h)^{\cup j}$.

Indeed,
for $p \geq 1$, the term $ \widetilde h^{\cup p}$ is zero
on $\FF_{p-1}C$ whence, restricted to $\FF_{p-1}C$, the infinite  sum
$\eta  \varepsilon 
+\sum_{j \geq 1}  (-  \widetilde h)^{\cup j}$
has only finitely many non-zero terms.
This implies the claim since $C$ is cocomplete, i.e., 
$C = \cup \FF_jC$. 
\end{proof}

Thus, for a cocomplete coaugmented coalgebra $C$ and a differential 
graded algebra $A$, there is no need to distinguish between
the members of $\GGG_{\mathrm{coaug}}(C,A)$ and coaugmented homotopies of
coaugmented twisting cochains from $C$ to $A$,
and this is, likewise, true in the supplemented case.

\section{Enriched categories, dg categories and dg cocategories}

Henceforth we  take every object,
e.~g., category, cocategory, etc. upon which we  carry out
an algebraic construction to be {\em small\/} without explicitly saying so.
Thus a (directed) graph (precategory, or quiver) 
of the kind mentioned above is supposed to be small.
However the universe, that is, the
closed monoidal category of 
graphs (or quivers  or precategories),
enriched in the closed monoidal category 
of chain complexes over $R$,
is not taken to be small.
For more details, the reader may consult, e.g., \cite{MR1567445},
\cite{MR651714},
\cite{MR294454}.

A small category is an {\em interpretation\/} 
\cite{MR1809685}
of the category axioms within set theory, cf. \cite{MR0354798}.
Thus an (oriented) {\em graph\/} $(O,A,s,t)$
consists of a set $O$ of {\em objects\/},
a set $A$ of {\em arrows\/}, and two
maps $s,t \colon A \to O$, the map $s$ being referred to
as {\em source\/} and the map $t$ as {\em target\/} map.
The {\em product\/} 
\[
A \times _OA = \{(g,f); g,f \in A,\, t(g)=s(f)\}
\]
of $A$ with itself {\em over $O$\/}
is the set of {\em composable\/} arrows.
When the set $O$ is fixed, it is common to refer to 
the graph $(O,A,s,t)$ as
an $O$-{\em graph\/}
\cite{MR0354798}.
A {\em morphism\/} of graphs and, likewise,
a {\em morphism\/} of $O$-graphs, is defined in the obvious way.
With this notion of morphism, graphs constitute a category
$\mathcal G$  and 
$O$-graphs form a subcategory $\mathcal G_O$ thereof.
A graph is {\em discrete\/} when its only arrows are the identity maps
between objects, so that the set of arrows coincides with its set of objects
and so that the source and target maps are necessarily
the identity.
A set $O$ determines a unique discrete graph 
$(O,O,\mathrm{Id},\mathrm{Id})$
in an obvious way and, with a slight abuse of notation, 
we denote this graph by $O$ as well
and  refer to it as the {\em discrete graph\/} $O$.

A (small) {\em category\/} is a graph 
$(O,A,s,t)$ together with  two  maps
\[
\mathrm{Id}\colon O \longrightarrow A,
\quad
c \colon A \times _O A \longrightarrow A,
\]
referred to as {\em identity\/} and {\em composition\/},
subject to the familiar constraints.
The set of arrows $A$ is then commonly referred to as that of 
{\em morphisms\/}. Thus, relative to the product over objects,
a category is a {\em monoid in the category of graphs\/}.
Likewise a {\em cocategory\/} is a comonoid in the category of graphs.
The discrete graph $O=(O,O,\mathrm{Id},\mathrm{Id})$
is a category in an obvious manner, the {\em discrete category\/}.
A {\em topological\/} category
is a small category having as  objects and morphisms topological spaces
with continuous  structure maps.
A {\em smooth\/} category
is, likewise,  a small category having as  objects and morphisms 
smooth manifolds
with smooth   structure maps.

Let  $R \mathcal G$ denote the category of
graphs enriched in the closed monoidal category $\modr$
of $R$-modules.
An $R$-{\em graph\/} is an object
of the category $R \mathcal G$.
 Given the two $R$-graphs $\mathcal A$ and $\mathcal B$,
a {\em morphism\/}  $f\colon \mathcal A \to \mathcal B$ of
$R$-{\em graphs\/} is defined in the obvious way: 
It consists of a map
\[
\mathrm{Ob}(f)\colon \mathrm{Ob}\mathcal A 
\longrightarrow \mathrm{Ob}\mathcal B
\]
and, for each ordered pair $(x,y)$ of objects of $\mathcal A$,
of a morphism
\[
f_{x,y}\colon \mathcal A(x,y) \longrightarrow  \mathcal B(fx,fy)
\]
of $R$-modules.
We refer to an $R$-graph having object set $O$ as
an $RO$-{\em graph\/}.  Plainly, $RO$-graphs constitute a subcategory
of $R \mathcal G$.

A particular $RO$-graph $R[O]$ arises from the 
discrete $O$-graph determined by $O$, with source and target maps
the identity map of $O$; we refer to this $R$-graph as the 
{\em discrete\/}  $RO$-graph.
With the obvious notions of composition and identity,
$R[O]$ becomes a category, indeed, 
$R[O]$ acquires a {\em ringoid\/} structure in an obvious manner
and, with the obvious interpretation of the term \lq\lq module\rq\rq,
a general $RO$-graph is then a {\em module\/} over $R[O]$;
frequently we will use the terminology 
$R[O]$-{\em module\/} rather than {\em module\/} over $R[O]$.
Thus $R[O]$-module and $RO$-graph are synonymous
notions.
We can realize the ringoid $R[O]$ as a functor from $O$ to
the category $\modr$ of $R$-modules.

Likewise, let $\crg$ be
the closed monoidal category 
of 
{\em graphs\/} enriched in the closed monoidal category 
$\chr$ of $R$-chain complexes.
With these preparations out of the way,
an $R$-{\em category\/} is
a unital associative algebra in the 
closed monoidal category
$R\mathcal G$ of $R$-graphs;
a {\em differential graded\/} $R$-{\em category\/}
(dg category)
is a unital associative algebra
in the category
$\crg$.
We will also refer to an object 
of $\crg$ having object set $O$ as an $R[O]$-{\em chain complex\/}.

The discrete $R$-graph $R[O]$ acquires obvious 
$R$-category, $R$-cocategory and, more generally,
dg category and dg cocategory structures.
The dg category $\mathcal A$ with object set $O$ being {\em unital\/}
signifies that the unit
$\eta\colon  R[O] \to \mathcal A$
is a morphism of dg categories.
This unit  encodes of course the identities in $\mathcal A$.
In the same vein, a (counital) {\em differential graded\/} $R$-cocategory
(dg cocategory) is a
counital coassociative coalgebra in the 
category $\crg$.
The dg cocategory $\mathcal C$ with object set $O$ being {\em counital\/}
signifies that the counit
$\varepsilon\colon \mathcal C\to R[O] $
is a morphism of dg cocategories.
A  dg category $\mathcal A$ with object set $O$ endowed with a morphism
$\varepsilon\colon \mathcal A\to R[O]$ of dg categories
such that $\varepsilon\eta$ is the identity of 
the dg category $R[O]$ is defined to be {\em augmented\/};
likewise
a  dg cocategory $\mathcal C$ with object set $O$ endowed with a morphism
$\eta\colon R[O] \to \mathcal C$ of dg cocategories
such that $\varepsilon\eta$ is the identity of 
the dg cocategory $R[O]$ is defined to be {\em coaugmented\/}.

\section{The nerve of a category}

\subsection{Preliminaries}
Let $\Ord$ denote the category of finite ordered sets
$[p]=(0,1,\ldots, p)$, $p \geq 0$, and monotone maps.
Let $\Cat$ be a category.
A {\em simplicial object\/}
in $\Cat$ is a contravariant functor from
$\Ord$ to $\Cat$;
a {\em cosimplicial object\/}
in $\Cat$ is a (covariant) functor from
$\Ord$ to $\Cat$.
The assignment to $[p]$ ($p \geq 0$) of the standard simplex 
$\nabla[p]$ yields a cosimplicial space $\nabla$.

Let $\Cat$ be a category, not necessarily small.
Let $\XX$ be an object of $\Cat$.
This object defines a \lq\lq trivially\rq\rq\ 
simplicial object in $\Cat$, and we denote this simplicial object
by $\XX$ again.
It has $X_p = X$, for $p \geq 0$, and every arrow the identity.

\subsection{The nerve}

The {\em nerve\/} $N\Cat $ of a small
category $\Cat$ is the simplicial set
having the objects as vertices, the morphisms as edges,
the triangular commutative diagrams  
as 2-simplicies, etc.
More formally, the nerve arises in the following way \cite{MR0232393}:
Regard an ordered set $S$  as a category  
with $S$ as set of objects 
and with just one morphism from $x\in S$ to $y\in S$ whenever
$x \leq y$.
Given the category $\mathcal C$, let
$N\mathcal C(S)$ be the set of functors from $S$ to $\mathcal C$.
As $S$ ranges over the finite ordered sets, this 
construction yields the {\em nerve\/}
of the category $\mathcal C$.

For convenience, here is the standard elementary description of the 
face and degeneracy operators on the degree $p$ constituent
$\NNN_p(\Cat) = \Mor_\Cat \times_O \ldots  \times_O \Mor_\Cat$ 
($p \geq 1$ factors) 
of the nerve $\NNN(\Cat)$ of $\Cat$,
with $\NNN_0(\Cat) = O$, and with the notation
$[x_0|\ldots|x_{p-1}] \in \NNN_p(\Cat)$ for $p \geq 1$ 
for the members of $\NNN_p(\Cat)$, 
in particular $p$ composable morphisms for $p \geq 2$:
\begin{equation}
\begin{aligned}
\pd_0[x]&= s(x) \in O,
\\
\pd_1[x]&= t(x) \in O,
\\
s_0(y)&= [\Id_y] \in \Mor_\Cat(y,y), 
\\
\pd_j[x_0|\ldots|x_{p-1}]&= 
\begin{cases}
[x_1|\ldots|x_{p-1}], & j = 0,
\\
[x_0|\ldots|x_{j-1} x_j| \ldots|x_{p-1}],
&1 \leq j \leq p-1, 
\\
[x_0|\ldots|x_{p-2}], &  j=p,
\end{cases}
\\
s_j[x_0|\ldots|x_{p-1}]&= [x_0|\ldots |x_{j-1}|\Id_{t(x_{j-1})}  |x_j| \ldots  |x_{p-1}],
\ 0 \leq j \leq p.
\end{aligned}
\end{equation}

\subsection{Bar and cobar constructions}
Let $\mathcal A$ be an augmented small dg category having object set $O$.
Its {\em nerve\/} $\NNN\mathcal A$ carried out
relative to the operation of taking the  tensor product over $R[O]$
(the appropriate coend)
is a simplicial
differential graded $R[O]$-module,
that is, a simplicial object in 
$\crg$; 
cf. \cite[IX.6 p.~226 ff.]{MR0354798}
 for the notion of coend.
Condensation, that is, totalization and normalization, yields the
differential graded $R[O]$-module
\begin{equation}
\mathcal B \mathcal A=|\NNN\mathcal A|,
\end{equation}
by construction, a differential graded
$R[O]$-graph having, in particular, $O$ as its set of objects.
The ordinary {\em Alexander-Whitney\/} diagonal $\Delta$ turns
$\mathcal B \mathcal A$ into a dg cocategory having $O$ as its set of 
objects. The resulting
dg cocategory $\mathcal B \mathcal A$ is the
{\em reduced normalized bar construction\/} for $\mathcal A$.
When $O$ consists of a single element so that 
$\mathcal A$ is an ordinary augmented dg algebra,
the cocategory $\mathcal B \mathcal A$ 
has a single object and
is the ordinary 
reduced normalized bar construction for $\mathcal A$.

An alternate construction of the 
reduced normalized bar construction relies on the observation that,
given the set $O$ and the dg category $\mathcal A$ with object set $O$, 
the functor which assigns to
a dg $R[O]$-module the induced $\mathcal A$-module
is left adjoint to the forgetful functor,
and the adjunction determines a comonad.
The bar construction then arises from
 the associated standard construction.

In the same vein, 
let $\mathcal C$ be a coaugmented small dg cocategory having object set $O$ .
The construction dual to the bar construction yields the
dg category
$\Omega \mathcal C$,
the {\em reduced normalized cobar construction\/} 
for $\mathcal C$.
When $O$ consists of a single element so that 
$\mathcal C$ is an ordinary coaugmented dg coalgebra,
$\Omega \mathcal C$ is the ordinary 
reduced normalized cobar construction for $\mathcal C$.
Similarly as before,
an alternate construction relies on the observation that
the appropriate adjunction determines a monad.
The cobar construction then arises from
the associated dual standard construction.

\subsection{The path object, its nerve, and twisted objects}
\label{pathob}
Let $\Cat$ be a small category.
Let $\XX$ be an object of $\Cat$.
Let $\PPP \XX$ be the category having
$\ob(\PPP \XX) =\XX$
and $\mor(\PPP \XX) =\XX\times \XX$, with
\begin{align*}
s,t \colon  \XX \times \XX 
&\longrightarrow \XX ,
\ s(x_1,x_2) = x_1, \  t(x_1,x_2) = x_2,\ x_1,x_2 \in \XX,
\\
\Id\colon   \XX  
&\longrightarrow\XX \times \XX,
\ 
\Id(x) = (x,x),\ x \in \XX,
\\
c \colon  \mor(\PPP \XX) \times_\XX \mor(\PPP \XX)
&\longrightarrow \mor(\PPP \XX),\ 
c((x_1,x_2),(x_2,x_3))= (x_1,x_3) ,\ x_1,x_2,x_3 \in \XX.
\end{align*}
Thus $\PPP \XX$ has
a unique morphism
between each pair of members of $\XX$.
To have a name, we refer to
$\PPP \XX$ as the {\em path category\/}
associated to $\XX$.

Let $PX = N\PPP \XX$, the nerve of $\PPP \XX$.
This is the familiar simplical object in $\Cat$ having
$(PX)_p =X^{\times (p+1)}$, for $p \geq 0$, with the standard
face and degeneracy maps
\begin{equation}
\begin{aligned}
\pd_j(y_0,\ldots,y_p)&= 
\begin{cases}
(y_1,\ldots,y_p), & j = 0,
\\
(y_0,\ldots, y_{j-1}, y_{j+1},\ldots,y_p),
&1 \leq j \leq p-1, 
\\
(y_0,\ldots,y_{p-1}), &  j=p,
\end{cases}
\\
s_j(y_0,\ldots,y_p)&=(y_0,\ldots, y_{j-1}, y_j, y_j, y_{j+1},\ldots,y_p), 
\ 0 \leq j \leq p,
\end{aligned}
\end{equation}
cf., e.g., \cite[Section 2 p.~294]{MR0345115}, 
also known as the {\em path object\/}
associated to $\XX$.

The path category
$ \PPP \Cat$ of $\Cat$ has 
\begin{align*}
\ob( \PPP \Cat)&=\Mor_\Cat
\\
\Mor( \PPP \Cat)&=\Mor_\Cat \times  \Mor_\Cat,
\ {\rm written\ as}\ \{(y_0,y_1),\  y_0, y_1 \in \Mor_\Cat \}
\\
s &\colon \Mor_\Cat \times  \Mor_\Cat \to \Mor_\Cat,\ 
s (y_0,y_1)= y_0,
\\
t &\colon \Mor_\Cat \times  \Mor_\Cat \to \Mor_\Cat,\ 
t (y_0,y_1)= y_1,
\\
\Id  &\colon \Mor_\Cat \to  \Mor_\Cat \times \Mor_\Cat ,\ 
\Id (y)= (y,y)
\\
c &\colon (\Mor_\Cat \times \Mor_\Cat) \times_{\Mor_\Cat}
(\Mor_\Cat \times \Mor_\Cat)
  \to \Mor_\Cat,\ 
c((y_0,y_1), (y_1,y_2) )= (y_0,y_2).
\end{align*}

Then $P \Cat = \NNN \PPP \Cat$ is the simplicial category
having $\ob_p(P \Cat) = O^{\times (p+1)}$
and  $\Mor_p(P \Cat) = \Mor_\Cat^{ (p+1)}$, for $p \geq 0$.

For $p \geq 1$, let $\NNN_p(\Cat) \times_O \Mor_\Cat$ denote the 
set of $(p+1)$-tuples
$[x_0|x_1|\ldots| x_{p-1}]x$ of composable morphisms in $\Cat$
 and interpret 
$\NNN_0(\Cat) \times_O \Mor_\Cat$ 
as $\Mor_\Cat$ 
in the obvious way.
The map
\begin{equation}
\begin{aligned}
\NNN_p(\Cat) \times_O \Mor_\Cat 
&= \Mor_\Cat^{\times_O p} \times_O \Mor_\Cat
\longrightarrow\Mor_\Cat^{ \times(p+1)}=\Mor_p(P \Cat)
\\
[x_0|x_1|\ldots| x_{p-1}]x &\mapsto (y_0,\ldots,y_p)
\\
(y_0,\ldots,y_p)&=(x_0 x_1\ldots x_{p-1} x,x_1\ldots x_{p-1}x, 
\ldots, x_{p-1}x, x)
\end{aligned}
\label{welldefined}
\end{equation}
is well defined.
Setting, for $p \geq 0$,
\begin{equation}
\begin{aligned}
\pd_j[x_0|\ldots|x_{p-1}]x&
=\begin{cases}
[x_1|\ldots|x_{p-1}]x, & j = 0,
\\
[x_0|\ldots|x_{j-1} x_j| \ldots|x_{p-1}]x,
&1 \leq j \leq p-1, 
\\
[x_0| \ldots|x_{p-2}]x_{p-1}x, &  j=p,
\end{cases}
\\
s_j[x_0|\ldots|x_{p-1}]x&
=
[x_0|\ldots |x_{j-1}|\Id_{t(x_{j-1})}  |x_j| \ldots  |x_{p-1}]x, 
\ 0 \leq j \leq p,
\end{aligned}
\end{equation}
defines a simplicial structure
on $\NNN(\Cat) \times_O \Mor_\Cat =(\NNN_p(\Cat) \times_O \Mor_\Cat )_{p\geq 0}$,
and
the canonical projection
\begin{equation}
\NNN(\Cat) \times_O \Mor_\Cat \longrightarrow \NNN(\Cat)
\label{canproj}
\end{equation}
and the maps
 \eqref{welldefined}
are compatible with the simplicial structures. 
We say that
$\NNN(\Cat)\times_O \Mor_\Cat$ is the 
{\em principal simplicial twisted object
associated to\/} $\Cat$.

The  degeneracy operators of $\NNN(\Cat)$ plainly determine
those  of $\NNN(\Cat)\times_O \Mor_\Cat$,
and this is also true of the face operators
apart from the last one.
For $p \geq 1$, the 
last face operator $\partial_p\colon \NNN_p(\Cat) \to 
\NNN_{p-1}(\Cat)$
together with the map 
\begin{equation}
\begin{aligned}
\rho_p &\colon  \NNN_p(\Cat) \longrightarrow \Mor_\Cat,
\\
\rho_p[x_0|\ldots|x_{p-1}] &= x_{p-1} ,
\end{aligned}
\end{equation}
determines the last face operator
of 
$\NNN(\Cat)\times_O \Mor_\Cat$
as
\begin{equation}
\begin{aligned}
\pd_p^\rho&\colon \NNN_p(\Cat) \times_O \Mor_\Cat \longrightarrow 
\NNN_{p-1}(\Cat) \times_O \Mor_\Cat
\\
\pd_p^\rho[x_0|\ldots|x_{p-1}]x
&=\pd_p[x_0|\ldots|x_{p-1}]\rho_p[x_0|\ldots|x_{p-1}] x .
\end{aligned}
\end{equation}
The
sequence
$\rho_1,\rho_2,\ldots$ of maps
$\rho_p \colon \NNN_p(\Cat) \to \Mor_\Cat$ ($p \geq 1$) enjoys the properties
\begin{align}
\rho(\partial_p b) \rho(b) &= \rho(\partial_{p-1}(b)), \ 
b \in \NNN_p(\Cat),\  p \geq 2,
\label{enjoy1}
\\
\rho(s_p(b))&= \Id_{t(\rho(b))} \in \Mor_\Cat(t(\rho(b),t(\rho(b)), 
\
b \in \NNN_p(\Cat),\  p \geq 1.
\label{enjoy2}
\end{align}
For a simplicial principal bundle, such a map has come to be known
as a {\em twisting function\/},
 cf. \eqref{enjoy3} and \eqref{enjoy4} in Section \ref{spb} below and 
 \cite{MR111028}, \cite{MR279808}, \cite[p.~406]{MR0301736}, 
\cite{MR111032}.
For $p \geq 2$, the commutative diagram
\begin{equation}
\begin{gathered}
\xymatrixcolsep{3pc}
\xymatrix{
\NNN_p(\Cat) \ar[r]^{\Delta\phantom{aaaaa}} 
\ar[d]^{\partial^{\mathrm{next\ to\ last}}}
& \NNN_p(\Cat) \times \NNN_p(\Cat) \ar[r]^{(\partial^\last,\rho_p)\phantom{aaa}}&
\NNN_{p-1}(\Cat) \times_O \Mor_\Cat  \ar[r]^{(\rho, \partial^\last)}
& \Mor_\Cat \times_O \Mor_\Cat \ar[d]^c
\\
\NNN_{p-1}(\Cat) \ar[rrr]_{\rho_{p-1}}& & & \Mor_\Cat 
}
\end{gathered}
\end{equation}
depicts property \eqref{enjoy1}.
We refer to 
$\rho$ as the {\em universal twisting function
for\/}  $\Cat$ and write the  
principal simplicial twisted object
$\NNN(\Cat)\times_O \Mor_\Cat$ 
associated to $\Cat$
 as  $\NNN(\Cat)\times_\rho \Mor_\Cat$.

\subsection{Group case}
\label{groupc}
Let $\GG$ be a group, discrete, topological, or a Lie group,
and consider $\GG$ as a category
with a single object having each morphism an isomorphism.
Below we simultaneously treat the discrete, topological and smooth cases
without further mention.
With $\GG$ substituted for $\Cat$ and for $\Mor_\Cat$,
consider the universal
 twisting function  $\rho \colon \NNN(\GG) \to \GG$ for $\GG$;
it  has constituents $\rho_p \colon \NNN_p(\GG) \to \GG$
($p \geq 1$),
and this twisting function 
determines the total space
 $\NNN(\GG) \times_\rho \GG$
of the
 simplicial  principal (right) $\GG$-bundle 
\begin{equation}
\NNN(\GG) \times_\rho \GG \to \NNN(\GG)
\label{sprr}
\end{equation}
(discrete, topological, smooth).

Consider
the path category
$\PPP \GG$ of $\GG$.
Segal  \cite{MR0232393} writes this category as $\overline \GG$.
The group $\GG$ acts freely 
on $\PPP \GG$
from the right, 
by right translation on $\ob (\PPP \GG)= \GG$ and 
diagonal  right translation on
$\mor(\PPP \GG) = \GG \times \GG$.
The association
\begin{equation}
\begin{aligned}
 (\GG \times \GG, \GG) &\longrightarrow (\GG, \{e\}),
\ 
(x_1,x_2,x) \mapsto (x_1x_2^{-1},e),\ x_1,x_2, x \in \GG,
\end{aligned}
\end{equation}
determines a smooth functor $\Pi \colon \PPP \GG \to \GG$ inducing
an isomorphism $(\PPP \GG)/\GG \to \GG$
of categories, a homeomorphism in the tolological case and a 
diffeomorphism in the smooth case.
Taking nerves, we obtain the  simplicial principal right $\GG$-bundle
\begin{equation}
N\Pi \colon N(\PPP \GG) \longrightarrow N\GG.
\label{simpprr}
\end{equation}
The (lean) geometric realization of $N\Pi$
yields the universal principal $\GG$-bundle
$E\GG \to B\GG$ over the classifying space $B\GG$
 \cite{MR0232393}, see also \cite{MR413122}.
The geometric realization of $N(\PPP \GG)$ is contractible
for completely formal reasons.

The constituent 
$N_0\Pi \colon N_0(\PPP \GG) \to N_0\GG$
is the trivial map $\GG \to \{e\}$, viewed as a principal
$\GG$-bundle and,
for $q \geq 1$, the constituent
$N_q\Pi \colon N_q(\PPP \GG) \to N_q\GG$
is the principal right $\GG$-bundle
\begin{equation}
N_q\Pi \colon \GG^{q+1} \longrightarrow \GG^q,\ 
 (y_0, y_1, \dots,y_q)
\mapsto
(y_0y_1^{-1}, y_1y_2^{-1},\dots,y_{q-1}y_q^{-1}).
\label{simpprqq}
\end{equation}
By construction, the group $\GG$ acts by diagonalwise right translation, 
that is
\begin{equation}
\begin{aligned}
 \GG^{q+1} \times \GG &\longrightarrow  \GG^{q+1},
\ 
 (y_0, y_1, \dots,y_q,y) \mapsto  (y_0y, y_1y, \dots,y_qy).
\end{aligned}
\end{equation}
The simplicial morphism  \eqref{welldefined} now takes the form
\begin{equation}
\NNN(\GG) \times _\rho \GG \longrightarrow \NNN(\PPP G),
\label{welldefined2}
\end{equation}
is plainly $\GG$-equivariant,
and has inverse
\begin{equation}
\begin{aligned}
\NNN(\PPP G) &\longrightarrow \NNN(\GG) \times _\rho \GG
\\
(y_0,\ldots,y_p)&\longmapsto [y_0y_1^{-1}| y_1y_2^{-1}|\ldots|y_{p-1}y_p^{-1}]y_p,\ 
p \geq 0.
\end{aligned}
\label{welldefined3}
\end{equation}
Thus \eqref{welldefined2} yields an isomorphism 
of simplicial principal right $\GG$-bundles
from
\eqref{sprr}  to \eqref{simpprr}.

\begin{rema}
{\rm
For the bar resolution,
it is common to refer to the $(p+1)$-tuples of the kind
$ (y_0, y_1, \dots,y_p)\in \GG^{p+1}$ 
as {\em homogeneous generators\/}
and to those of the kind
$[x_0|x_1|\ldots| x_{p-1}]x$
as {\em non-homogeneous generators\/}
\cite[IV.5 p.~ 119]{maclaboo}.
}
\end{rema}

\section{Simplicial principal bundles}
\label{spb}

Let $B$ be a simplicial set and $\KK$ a simplicial group.
A {\em twisting function\/} 
$\rho \colon B \to \KK$ consists of a sequence
$\rho_1,\rho_2,\ldots$ of maps
$\rho_p \colon B_p \to \KK_{p-1}$ ($p \geq 1$) subject to
\begin{align}
\rho(\partial_p b) \partial_{p-1}(\rho(b)) &= \rho(\partial_{p-1}(b)), 
\ b \in B_p, \ p \geq 2,
\label{enjoy3}
\\
\rho(s_p(b))&= e \in \KK_p, \ b \in B_p, \ p \geq 1,
\label{enjoy4}
\end{align}
 cf.  \cite{MR111028}, \cite{MR279808}, \cite[p.~406]{MR0301736}, 
\cite{MR111032}.
Similarly as before, the commutative diagram
\begin{equation}
\begin{gathered}
\xymatrixcolsep{3.5pc}
\xymatrix{
B \ar[r]^\Delta \ar[d]_{\partial^{\mathrm{next\ to\ last}}}& B \times B \ar[r]^{(\partial^\last,\rho)}&
B \times \KK  \ar[r]^{(\rho, \partial^\last)}
& \KK \times \KK \ar[d]^\mult
\\
B \ar[rrr]_{\rho}& & & \KK
}
\end{gathered}
\end{equation}
depicts property \eqref{enjoy3}.

\begin{rema}
{\rm
The terminology \lq twisting function\rq\ 
is consistent with the terminology
in Subsection \ref{groupc}
relative to an ordinary group 
via the assignment to an ordinary group 
of its associated trivially simplicial group.
}
\end{rema}

Let $\rho \colon B \to \KK$
be a twisting function. The 
{\em twisted cartesian product\/}
$B \times_\rho \KK$ is the simplicial $\KK$-set having
$B \times \KK$ as underlying graded object and
\begin{align*}
\pd_j&\colon B_p \times \KK_p \longrightarrow B_{p-1} \times \KK_{p-1},\ p \geq 1, 
\\
\pd_j(u,x)&= 
\begin{cases}
(\pd_j(u),\pd_j(x)), & 0 \leq j < p,
\\
(\pd_p(u),\rho_p(u)\pd_p(x)), & j=p,
\end{cases}
\\
s_j&\colon B_p \times \KK_p \longrightarrow B_{p+1} \times \KK_{p+1},\ p \geq 0, 
\\
s_j(u,x)&=(s_j(u),s_j(x)),\   0 \leq j \leq p,
\end{align*}
 \cite{MR111028} , \cite{MR279808}, \cite[p.~406]{MR0301736}, \cite{MR111032}.
The twisted cartesian product 
$B \times_\rho \KK$ is the total space of the resulting simplicial principal
$\KK$-bundle $\mathrm{pr}_B\colon B \times_\rho \KK\to B$.
Every simplicial principal (right) $\KK$-bundle
arises in this manner
 \cite{MR111028}, \cite{MR279808}, \cite{MR111032}; see also Remark \ref{W}
below.

\begin{prop}
\label{gug11}
For a simplicial set $B$ and
a simplicial group $\KK$,
the assignment to a simplicial principal bundle structure   
 on $B \times K$ of the degree $-1$ morphism 
\begin{equation}
\rho \colon B
\stackrel{\Id \times \{e\}} \longrightarrow B \times \KK
\stackrel {\pd^\last}\longrightarrow B \times \KK
\stackrel{\pr_\KK} \longrightarrow  \KK 
\end{equation}
of the underlying graded objects 
establishes a bijection between
twisting functions from $B$ to $\KK$ and
simplicial principal bundle structures
 $B \times \KK$ in such a way that
$B \times _\rho \KK$
recovers the simplicial principal bundle structure.
Thus
every simplicial principal bundle structure
is of the kind $B \times_\rho \KK$, for some uniquely determined twisting 
function
$\rho\colon B \to \KK$. 
\qed
\end{prop}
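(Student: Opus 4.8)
The plan is to run, in the simplicial world, the argument behind Proposition~\ref{gug1}: a bundle structure compatible with a fixed fibrewise action is completely determined by its ``corner'', and that corner is precisely a twisting datum. Throughout, $B \times \KK$ carries its standard graded right $\KK$-set structure $(u,x)\cdot g = (u,xg)$ in degree $p$, and a \emph{simplicial principal bundle structure} on it is a choice of face and degeneracy operators turning $B\times\KK$ into a simplicial $\KK$-set for which $\pr_B$ is simplicial. The map of the statement sends such a structure to $\rho = \pr_\KK\,\pd^\last\,(\Id\times\{e\})$, and I must show it is inverse to $\rho \mapsto B\times_\rho\KK$.

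First I would pin down the shape of an arbitrary such structure. Compatibility of $\pr_B$ with faces forces the first coordinate of $\pd_j(u,x)$ to be $\pd_j u$, so $\pd_j(u,x) = (\pd_j u,\psi_j(u,x))$ for a map $\psi_j$ into $\KK$. Equivariance gives $\psi_j(u,xg)=\psi_j(u,x)\,\pd_j(g)$, whence $\psi_j(u,x)=c_j(u)\,\pd_j(x)$ with $c_j(u)=\psi_j(u,e)=\pr_\KK\,\pd_j(u,e)$; the same reasoning applied to degeneracies gives $s_j(u,x)=(s_j u,\,c'_j(u)\,s_j(x))$. Thus the whole structure is encoded by the $\KK$-valued functions $c_j,c'_j$ on $B$, exactly as a bundle differential in Proposition~\ref{gug1} is encoded by its corner $\tau_D$. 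By construction the last-face twist $c_p$ is the map $\rho$ of the statement.

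Next I would feed the simplicial identities into this encoding. Because the $\KK$-action is free, after applying any identity one may cancel the common $\KK$-factor coming from the structures of $B$ and $\KK$ themselves, leaving an equation purely among the twists $c_j,c'_j$. The identity $\pd_i\pd_p=\pd_{p-1}\pd_i$ splits into cases: for $i=p-1$ it reduces to precisely $\rho(\pd_p b)\,\pd_{p-1}\rho(b)=\rho(\pd_{p-1}b)$, that is, \eqref{enjoy3}, while the last-degeneracy identity $\pd_{p+1}s_p=\Id$ together with the mixed relations $\pd_i s_j$ reduce to $\rho(s_p b)=e$, that is, \eqref{enjoy4}, along with the vanishing of all remaining twists $c_j$ ($j<p$) and $c'_j$. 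Hence any structure is forced into the form $\pd_j(u,x)=(\pd_j u,\pd_j x)$ for $j<p$, $\pd_p(u,x)=(\pd_p u,\rho_p(u)\pd_p x)$, $s_j(u,x)=(s_j u,s_j x)$, which is exactly $B\times_\rho\KK$; conversely, as recalled in Section~\ref{spb}, $B\times_\rho\KK$ is a genuine simplicial principal bundle for every twisting function $\rho$. The two assignments are then visibly mutually inverse, and reading off $c_p$ returns the $\rho$ one started with.

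The main obstacle is the final step of this bookkeeping: showing that the face--face and face--degeneracy identities \emph{force} the non-last twists $c_j$ ($j<p$) and all $c'_j$ to vanish, rather than merely constraining them. The honest way to see this is to organize the identities so that each non-last twist is expressed through twists on degenerate simplices, which \eqref{enjoy4} kills; this is the content of the classical structure theorem that every simplicial principal bundle is a twisted cartesian product, for which I would appeal to \cite{MR111028}, \cite{MR279808}, \cite{MR111032}. Once that normalization is in hand, injectivity and surjectivity of the correspondence are immediate from the uniqueness of the corner datum, exactly paralleling the proof of Theorem~\ref{param1}.
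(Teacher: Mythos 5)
Your opening reduction is sound, but the pivotal claim in your second and third paragraphs --- that the simplicial identities \emph{force} the non-last twists $c_j$ ($j<p$) and all the $c'_j$ to be trivial --- is false, and no reorganization of the bookkeeping can rescue it. Concretely: take any twisting function $\rho$ and any degree-zero graded map $\vartheta\colon B\to\KK$ violating \eqref{mort}, and transport the structure of $B\times_\rho\KK$ along the graded bijection $\Psi(u,x)=(u,\vartheta(u)x)$. Since $\Psi$ commutes with the standard $\KK$-action and with $\pr_B$, the conjugated operators $\pd_j'=\Psi^{-1}\pd_j\Psi$ and $s_j'=\Psi^{-1}s_j\Psi$ again constitute a simplicial $\KK$-set structure on the graded set $B\times\KK$ with free action and simplicial projection onto $B$ --- a ``simplicial principal bundle structure'' in your broad sense --- yet its non-last twists $c_j(u)=\vartheta(\pd_j u)^{-1}\,\pd_j\vartheta(u)$ are nontrivial for generic $\vartheta$, and its corner $\rho'(u)=\vartheta(\pd_p u)^{-1}\,\rho(u)\,\pd_p\vartheta(u)$ is not even a twisting function unless $\vartheta$ satisfies \eqref{mort}, by Lemma \ref{lem11}. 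The classical theorem you appeal to (Barratt--Gugenheim--Moore, Kan, Curtis) says every simplicial principal bundle is \emph{isomorphic} to a twisted cartesian product, via a choice of normalized pseudo-cross-section; it does not say that every such structure on $B\times\KK$ literally \emph{is} one, which is what your forcing claim needs. Under your broad reading the proposition itself would be false, so the reading must be wrong.

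The statement is to be read --- and the paper reads it, giving no argument beyond the \qed{} --- in the definitional sense exactly parallel to Proposition \ref{gug1}: a simplicial principal bundle structure on $B\times\KK$ is one in which the faces $\pd_j$, $0\le j<p$, and all degeneracies are the untwisted product operators, only the last face being allowed to deviate (equivalently, the pseudo-cross-section $b\mapsto(b,e)$ is regular; the existence of such a normalization for an abstract bundle is the separate classical fact recorded before the proposition and in Remark \ref{W}). Under that reading the first half of your argument is already the entire proof: equivariance and compatibility with $\pr_B$ pin down $\pd_p(u,x)=(\pd_p u,\rho(u)\pd_p x)$ with $\rho=\pr_\KK\,\pd^\last\,(\Id\times\{e\})$; the identity $\pd_{p-1}\pd_p=\pd_{p-1}\pd_{p-1}$, after cancelling by freeness, is \eqref{enjoy3}; the identity $\pd_{p+1}s_p=\Id$ gives \eqref{enjoy4}; the remaining identities hold automatically; and the two assignments are visibly inverse. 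The isomorphism-versus-equality phenomenon your counterexample exhibits is precisely what Proposition \ref{gug13} and Theorem \ref{thm4} are designed to handle: the gauge maps $\vartheta$ subject to \eqref{mort} account for isomorphisms between \emph{distinct} twisted structures, and only after passing to the orbit set $\DDD(B,\KK)$ are they absorbed.
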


\begin{rema}
{\rm 
As in Subsection \ref{groupc} above,
 we here give preferred treatment  to the {\em last\/} face operator, 
as  in 
\cite{MR394720} and
\cite{MR111032, MR111033}.
This procedure is appropriate 
for principal bundles with  structure group acting
on the  total space
from the {\em right\/}
and simplifies comparison with the bar construction.

}
\end{rema}

The degree zero morphisms $\Mor_0(B,\KK)$ of the underlying graded
 sets from $B$ to $\KK$
form a group under
pointwise multiplication.
Let $\Mor_{-1}(B,\KK)$ 
denote the degree $-1$ morphisms
of the underlying graded
 sets from $B$ to $\KK$.
The association
\begin{equation}
\xymatrixcolsep{3.5pc}
\begin{aligned}
{}&\Mor(B,\KK) \times \Mor_{-1}(B,\KK) \longrightarrow \Mor_{-1}(B,\KK),
\
(\vartheta,\rho)  \mapsto \vartheta* \rho
\\
{}\vartheta* \rho&\colon \xymatrix{
B \ar[r]^{\Delta \phantom{aaa}} & 
B \times B  \times B\ar[r]^{(\pd^\last,\rho,\vartheta)\phantom{}}&
B \times \KK\times \KK \ar[r]^{(\vartheta,\Id,\pd^\last)} &
\KK \times \KK\times \KK\ar[r]^{\phantom{aaaaa}(\mathrm{left},\mathrm{right}^{-1})}&
\KK,
}
\end{aligned}
\label{act1}
\end{equation}
in formulas,
\begin{equation}
(\vartheta* \rho)(b)=\vartheta(\pd_p(b)) \rho(b)\vartheta(\pd_p(b))^{-1},
\ b \in B_p,
\end{equation}
yields an action of $\Mor(B,\KK)$ on $\Mor_{-1}(B,\KK)$.

\begin{lem}
\label{lem11}
Let $\rho_1\colon B \to \KK$ be a twisting function,
let $\vartheta \colon B \to \KK$
be a degree zero morphism of the underlying graded sets,
and let $\rho_2 = \vartheta * \rho_1\colon B \to \KK$.
Then $\rho_2$ is a twisting function if and only if
\begin{align}
\pd^\last\vartheta \pd^{\mathrm{ntlast}}&=\pd^\last \pd^\last\vartheta
\label{mort}
\end{align}
or, equivalently,
\begin{align}
\pd_{p-1}\vartheta(\pd_{p-1}(b))&=\pd_{p-1}(\pd_p(\vartheta(b))),\ b \in B_p,
\ p \geq 1.
\end{align}
\end{lem}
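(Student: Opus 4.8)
The strategy is to compute directly with the defining identity \eqref{enjoy3} for a twisting function, tracking how conjugation by $\vartheta$ perturbs it. Since $\rho_2 = \vartheta * \rho_1$ is defined pointwise by $(\vartheta*\rho_1)(b)=\vartheta(\pd_p(b))\,\rho_1(b)\,\vartheta(\pd_p(b))^{-1}$ for $b\in B_p$, the degeneracy condition \eqref{enjoy4} for $\rho_2$ is immediate from that for $\rho_1$: applying the formula to $s_p(b)$ and using $\rho_1(s_p(b))=e$ together with the simplicial identities yields $\rho_2(s_p(b))=e$ regardless of \eqref{mort}. So the degeneracy condition plays no role, and the entire content of the lemma lives in the face condition \eqref{enjoy3}.

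First I would write out the face condition \eqref{enjoy3} for $\rho_2$, namely $\rho_2(\pd_p b)\,\pd_{p-1}(\rho_2(b))=\rho_2(\pd_{p-1}b)$ for $b\in B_p$, and substitute the conjugation formula for each of the three occurrences of $\rho_2$. This produces a product of terms of the form $\vartheta(\pd_j(\cdots))$, $\rho_1(\cdots)$, their inverses, and applications of $\pd_{p-1}$ to such conjugates. The plan is then to use the known face identity for $\rho_1$ to cancel the $\rho_1$-factors, reducing the whole equation to a purely $\vartheta$-theoretic identity involving only the various face operators applied to $\vartheta$. The key bookkeeping step is to commute $\pd_{p-1}$ past the conjugation: $\pd_{p-1}$ is a group homomorphism $\KK_{p-1}\to\KK_{p-2}$, so $\pd_{p-1}\bigl(\vartheta(\pd_p b)\,\rho_1(b)\,\vartheta(\pd_p b)^{-1}\bigr)=\pd_{p-1}\vartheta(\pd_p b)\cdot\pd_{p-1}\rho_1(b)\cdot\bigl(\pd_{p-1}\vartheta(\pd_p b)\bigr)^{-1}$, and one must carefully match the arguments $\pd_{p-1}\pd_p b$, $\pd_p\pd_{p-1}b$, etc. using the standard simplicial face relations $\pd_i\pd_j=\pd_{j-1}\pd_i$ for $i<j$.

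The main obstacle will be this argument-matching: after cancelling the $\rho_1$-factors via \eqref{enjoy3} for $\rho_1$, one is left comparing $\vartheta$ evaluated on several different composites of face maps applied to $b$, and showing these collapse exactly to the claimed identity \eqref{mort}. I expect the equation to reduce, after conjugating both sides by a common $\vartheta$-factor and using the simplicial identities to rewrite $\pd_{p-1}\pd_p=\pd_{p-1}\pd_{p-1}$, to the statement that $\pd_{p-1}\vartheta(\pd_{p-1}b)=\pd_{p-1}\pd_p(\vartheta(b))$, which is precisely the second displayed form of \eqref{mort}; and the first displayed form $\pd^\last\vartheta\pd^{\mathrm{ntlast}}=\pd^\last\pd^\last\vartheta$ is just the same identity written operator-theoretically. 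Since every implication in the calculation is reversible — each substitution and cancellation is an equivalence once the $\rho_1$ identity is assumed — this simultaneously establishes both directions of the "if and only if," so no separate converse argument is needed.
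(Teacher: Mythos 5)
Your overall strategy---substitute the gauge formula into the face identity \eqref{enjoy3}, use that $\pd_{p-1}$ is a group homomorphism, cancel the $\rho_1$-factors via \eqref{enjoy3} for $\rho_1$, and note that every step is a pointwise equivalence so both directions come at once---is exactly the strategy of the paper's proof. But your computation is anchored to the wrong formula for the action, and this is fatal rather than cosmetic. You take $(\vartheta*\rho_1)(b)=\vartheta(\pd_p b)\,\rho_1(b)\,\vartheta(\pd_p b)^{-1}$, i.e.\ pure conjugation by $\vartheta(\pd_p b)$. That formula does appear in the paper's ``in formulas'' line, but it is a misprint there: the diagram \eqref{act1} (whose third coordinate is $\pd^\last\vartheta(b)$, not $\vartheta(\pd^\last b)$), the requirement that $\vartheta*(b,x)=(b,\vartheta(b)x)$ commute with the last face operator (Proposition \ref{gug13}), and the paper's own proof of the lemma all use the twisted conjugation
\[
(\vartheta*\rho_1)(b)=\vartheta(\pd_p b)\,\rho_1(b)\,\bigl(\pd_p\vartheta(b)\bigr)^{-1},\qquad b\in B_p,
\]
the simplicial analogue of $\varphi*\tau=\varphi\cup\tau\cup\varphi^{-1}-(D\varphi)\varphi^{-1}$; the ``derivative'' term $(D\varphi)\varphi^{-1}$ is precisely what your formula drops.

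With pure conjugation the reduction you ``expect'' cannot happen. The right-hand side of \eqref{mort} is $\pd_{p-1}\pd_p\vartheta(b)$, a face operator applied to $\vartheta(b)$ itself, and in your formula $\vartheta$ is only ever evaluated on faces of $b$, so no such term can arise. Concretely, set $u=\vartheta(\pd_{p-1}\pd_pb)$, $g=\pd_{p-1}\vartheta(\pd_pb)$, $r=\rho_1(\pd_pb)$, $r'=\pd_{p-1}\rho_1(b)$; carrying out your plan, the face condition for $\rho_2$ becomes $uru^{-1}\cdot gr'g^{-1}=u\,rr'\,u^{-1}$, i.e.\ that $u^{-1}g$ centralizes $r'$---a condition entangled with $\rho_1$, not the $\rho_1$-free identity \eqref{mort}, so the claimed ``if and only if'' fails. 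Under the correct twisted formula, by contrast, the factor $\bigl(\pd_{p-1}\vartheta(\pd_pb)\bigr)^{-1}$ produced by $\rho_2(\pd_pb)$ cancels against $\pd_{p-1}\vartheta(\pd_pb)$ coming from $\pd_{p-1}\bigl(\rho_2(b)\,\pd_p\vartheta(b)\bigr)=\pd_{p-1}\bigl(\vartheta(\pd_pb)\rho_1(b)\bigr)$, and the residue is exactly $\pd_{p-1}\pd_p\vartheta(b)$ versus $\pd_{p-1}\vartheta(\pd_{p-1}b)$, as in the paper. Your dismissal of the degeneracy condition \eqref{enjoy4} rests on the same misprinted formula: under the correct action $\rho_2(s_pb)=\vartheta(b)\bigl(\pd_{p+1}\vartheta(s_pb)\bigr)^{-1}$, which is not automatically $e$ for a mere degree-zero morphism of graded sets (the paper's proof is silent on this point too, so you are in good company, but it cannot simply be waved away as ``immediate''). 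Had you executed the computation instead of deferring it, the impossibility of producing $\pd_{p-1}\pd_p\vartheta(b)$ would have flagged the inconsistency between the ``in formulas'' line and \eqref{act1}.
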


\begin{proof}
Let $b \in B$.
 Since $\rho_2 = \vartheta* \rho_1$, by  definition,
\begin{align*}
\rho_2(\pd_pb) &=
\vartheta(\pd_{p-1}\pd_pb) \rho_1 (\pd_pb)(\pd_{p-1}(\vartheta(\pd_pb)))^{-1}
\\
\partial_{p-1}(\rho_2(b)(\pd_p(\vartheta b)))&=
\partial_{p-1}(\vartheta(\pd_pb) \rho_1(b))
=[\pd_{p-1}(\vartheta(\pd_pb))] 
\pd_{p-1}( \rho_1(b))
\\
\rho_2(\pd_pb) \partial_{p-1}(\rho_2(b))\partial_{p-1}(\pd_p(\vartheta b))
&=
\vartheta(\pd_{p-1}\pd_pb) \rho_1 (\pd_pb)
\pd_{p-1}( \rho_1(b))
\\
&=
\vartheta(\pd_{p-1}
\pd_pb)\rho_1(\pd_{p-1}b) .
\end{align*}
On the other hand, still by definition,
\begin{align*}
\rho_2(\pd_{p-1}b)\pd_{p-1}\vartheta(\pd_{p-1}b)&
=\vartheta(\pd_{p-1}(\pd_pb)) 
\rho_1(\pd_{p-1}b).
\end{align*}
Hence 
\begin{equation*}
\rho_2(\pd_pb) \partial_{p-1}(\rho_2(b))=\rho_2(\pd_{p-1}b)
\end{equation*}
if and only if
$\partial_{p-1}(\pd_p(\vartheta b))= \pd_{p-1}\vartheta(\pd_{p-1}b)$.
\end{proof}

Let $\TTT(B,\KK)\subseteq \Mor_{-1}(B,\KK)$ denote the set of twisting 
functions from $B$ to $\KK$, and let 
$\Mort(B,\KK) \subseteq  \Mor_0(B,\KK)$ be
the subset consisting of those $\vartheta$ that are subject to \eqref{mort},
necessarily a subgroup.
The following is an immediate consequence of Lemma \ref{lem11}.

\begin{prop} The action {\rm {\eqref{act1}}}
restricts to
an  action of $\Mort(B,\KK)$ on $\TTT(B,\KK)$. \qed
\end{prop}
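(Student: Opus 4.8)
The plan is to treat the statement as the standard fact that a group action restricts to a subgroup acting on a stable subset. With the ambient action \eqref{act1} of $\Mor_0(B,\KK)$ on $\Mor_{-1}(B,\KK)$ already in hand, the two things to check are that $\Mort(B,\KK)$ is genuinely a subgroup of $\Mor_0(B,\KK)$ and that the subset $\TTT(B,\KK)$ of twisting functions is carried into itself by the elements of $\Mort(B,\KK)$. Once these are in place, the action axioms—triviality of the identity and compatibility with multiplication—are inherited verbatim from \eqref{act1}, so nothing further needs to be verified.

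First I would record that $\Mort(B,\KK)$ is closed under the group operations. Since $\KK$ is a simplicial group, each face operator $\pd_j$ of $\KK$ is a homomorphism and hence respects pointwise products and inverses. Reading condition \eqref{mort} in the pointwise form $\pd_{p-1}\vartheta(\pd_{p-1}b)=\pd_{p-1}\pd_p(\vartheta(b))$ for $b \in B_p$, one sees at once that if $\vartheta$ and $\psi$ each satisfy it then so do the pointwise product $\vartheta\psi$ and the inverse $\vartheta^{-1}$, while the constant map to the identity satisfies it trivially. Thus $\Mort(B,\KK)$ is a subgroup, as was already asserted when it was defined.

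The crux is stability of $\TTT(B,\KK)$, and here I would simply invoke Lemma \ref{lem11}. Given $\vartheta \in \Mort(B,\KK)$ and an arbitrary twisting function $\rho \in \TTT(B,\KK)$, that lemma tells us that $\vartheta * \rho$ is again a twisting function precisely because $\vartheta$ obeys \eqref{mort}. The decisive observation is that \eqref{mort} constrains $\vartheta$ alone and makes no reference to $\rho$: membership $\vartheta \in \Mort(B,\KK)$ therefore forces $\vartheta * \rho \in \TTT(B,\KK)$ uniformly in $\rho$, which is exactly the required stability. I expect no genuine obstacle here; the only point demanding a little care is the simplicial degree bookkeeping in \eqref{mort}—for $b \in B_p$ both sides land in $\KK_{p-2}$—together with the recognition that the separation of the $\vartheta$-condition from the datum $\rho$ is precisely what upgrades the pointwise equivalence of Lemma \ref{lem11} into a bona fide restriction of the action.
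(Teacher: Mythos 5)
Your proof is correct and follows essentially the same route as the paper, which disposes of the proposition in one line as ``an immediate consequence of Lemma \ref{lem11}'': the iff in that lemma isolates condition \eqref{mort} on $\vartheta$ alone, independently of $\rho$, which is exactly the stability of $\TTT(B,\KK)$ you extract. Your additional verification that $\Mort(B,\KK)$ is closed under pointwise products and inverses (using that the face operators of the simplicial group $\KK$ are homomorphisms) is a welcome spelling-out of the subgroup claim the paper merely asserts with ``necessarily a subgroup'' when defining $\Mort(B,\KK)$.
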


The assignment to a pair $(B,\KK)$ consisting of a simplicial set $B$ and  
a simplicial group $\KK$  of
the $\Mort(B,\KK)$-orbits 
\begin{equation}
\DDD(B,\KK)=\TTT(B,\KK)/\Mort(B,\KK)
\end{equation}
is a functor from the category of
pairs of the kind  $(B,\KK)$
to the category of sets.
This functor is the simplicial version of 
{\em Berikashvili\/}'s functor.

For a morphism $\phi\colon \KK_1 \to \KK_2$ of 
simplicial groups and a morphism $\chi \colon B_1 \to B_2$ 
of simplicial sets,
a $(\phi,\chi)$-{\em morphism\/} 
from the simplicial principal $\KK_1$-bundle
$E_1 \to B_1$
to the  simplicial principal $\KK_2$-bundle
$E_2 \to B_2$
is a commutative diagram
\begin{equation*}
\begin{gathered}
\xymatrix{
E_1\ar[r]^\Psi \ar[d]& 
E_2\ar[d]
\\
B_1\ar[r]_\chi & B_2
}
\end{gathered}
\end{equation*}
 in the category of simplicial sets
having $\Psi$ equivariant relative to $\KK_1$, 
the $\KK_1$-action on $E_2$
being via $\phi$.

Consider a  degree zero
morphism $\vartheta\colon B \to \KK$ of the underlying graded  sets. Define
the map
\begin{equation}
\vartheta *\colon
B\times \KK \stackrel{(\Delta,\Id)}\longrightarrow  B\times B \times\KK
\stackrel{(\Id,\vartheta,\Id)}\longrightarrow  B\times \KK \times\KK
\stackrel{(\Id,\mult)}\longrightarrow  B\times \KK,
\end{equation}
in formulas
\begin{equation}
\vartheta*(b,x) = (b, \vartheta(b) x).
\end{equation}
For twisting functions $\rho_1$ and $\rho_2$ from $B$ to $\KK$
such that $\vartheta* \rho_1 = \rho_2$, the map  $\vartheta *$ is an
$(\Id,\Id)$-morphism
\begin{equation}
\vartheta *\colon
B\times_{\rho_1} \KK \longrightarrow   B\times_{\rho_2} \KK 
\end{equation}
of simplicial principal bundles, necessarily an isomorphism.
Every $(\Id,\Id)$-morphism 
of simplicial principal $\KK$-bundles
from  $B \times_{\rho_1}\KK$ to $B \times_{\rho_2}\KK$
is necessarily of this kind,
cf. 
\cite[6.2 p. 100]{MR112135} (for left principal bundles).
The following is a consequence of Lemma \ref{lem11}.

\begin{prop}
\label{gug13}
For a simplicial set $B$,
a simplicial group $\KK$,
and two twisting functions   $\rho_1, \rho_2\colon B \to \KK$,
the assignment to an $(\Id,\Id)$-morphism  
$\Psi \colon B \times_{\rho_1} \KK\to  B \times_{\rho_2} \KK$ 
of simplicial principal bundles
of the  morphism 
\begin{equation}
\vartheta_\Psi\colon B 
\stackrel{\Id \times \{e\}} \longrightarrow B \times_{\rho_1} \KK
\stackrel \Psi\longrightarrow B \times_{\rho_2} \KK
\stackrel{\pr_\KK} \longrightarrow  \KK 
\end{equation}
of simplicial sets
establishes a bijection between
morphisms $\vartheta\colon B \to \KK$ of simplicial sets such that
$\vartheta* \rho_1 = \rho_2$
and $(\Id,\Id)$-morphisms of  simplicial principal bundles 
in such a way that
\begin{equation}
\Psi = \vartheta_\Psi * \colon   B \times_{\rho_1} \KK
\stackrel \Psi\longrightarrow B \times_{\rho_2} \KK. 
\end{equation}
Thus
any $(\Id,\Id)$-morphism of simplicial principal bundles from
$B \times_{\rho_1} \KK$ to
$B \times_{\rho_2} \KK$
is of the kind 
$\vartheta*\colon
B \times_{\rho_1} \KK \to
B \times_{\rho_2} \KK$, for some uniquely determined 
degree zero morphism $\vartheta \colon B \to \KK$ of 
the underlying graded
sets subject to
{\rm{\eqref{mort}}}. \qed
\end{prop}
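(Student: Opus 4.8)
The plan is to verify that the two assignments $\vartheta\mapsto\vartheta *$ and $\Psi\mapsto\vartheta_\Psi$ are mutually inverse, so that the correspondence is a bijection realizing the normal form $\Psi=\vartheta_\Psi *$. The discussion recorded just before the statement already supplies well-definedness of $\vartheta\mapsto\vartheta *$ together with its surjectivity onto the $(\Id,\Id)$-morphisms; the substance left to the proof is to construct the inverse explicitly and to confirm that $\vartheta_\Psi$ satisfies $\vartheta_\Psi *\rho_1=\rho_2$.

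First I would start from an $(\Id,\Id)$-morphism $\Psi\colon B\times_{\rho_1}\KK\to B\times_{\rho_2}\KK$ and recover $\vartheta_\Psi$. Compatibility with the projections to $B$ (the $\chi=\Id$ clause) forces $\Psi$ to preserve the first coordinate, and equivariance under the right $\KK$-action (the $\phi=\Id$ clause) forces the second coordinate to be left translation by an element depending only on the base simplex; evaluating at the identity of $\KK$ identifies that element as $\vartheta_\Psi(b)=\pr_\KK\Psi(b,e)$, whence $\Psi(b,x)=(b,\vartheta_\Psi(b)x)$, that is, $\Psi=\vartheta_\Psi *$. Next I would impose that $\Psi$ be a morphism of simplicial sets. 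Since the degeneracy operators and the face operators $\partial_0,\dots,\partial_{p-1}$ of both twisted products are undeformed, commutation of $\Psi$ with these amounts exactly to $\vartheta_\Psi$ intertwining them, while all of the twisting is concentrated in the terminal face operator $\partial_p$; there the requirement $\Psi\partial_p=\partial_p\Psi$, evaluated on $(b,x)$ and stripped of the common right factor $\partial_p x$, reads $\vartheta_\Psi(\partial_p b)\,\rho_1(b)=\rho_2(b)\,\partial_p\vartheta_\Psi(b)$. By the definition of the action in \eqref{act1} this is precisely $\vartheta_\Psi *\rho_1=\rho_2$, and Lemma \ref{lem11} then guarantees compatibility with $\rho_2$ being a twisting function, i.e. that \eqref{mort} holds. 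Conversely, for any $\vartheta$ with $\vartheta *\rho_1=\rho_2$ the discussion preceding the statement gives that $\vartheta *$ is an $(\Id,\Id)$-isomorphism of bundles.

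It then remains to check that the two assignments undo one another, which is routine: in one direction $\vartheta_{\vartheta *}(b)=\pr_\KK\bigl((\vartheta *)(b,e)\bigr)=\pr_\KK(b,\vartheta(b))=\vartheta(b)$, and in the other the form $\Psi=\vartheta_\Psi *$ obtained above is exactly $(\vartheta_\Psi)*=\Psi$; injectivity of $\vartheta\mapsto\vartheta *$ is immediate from the first identity. I expect the only genuine computation to be the terminal-face identity, and the point demanding care is the degree shift: $\vartheta_\Psi(b)$ lies in $\KK_p$ whereas $\rho_1(b)$, $\rho_2(b)$ and $\vartheta_\Psi(\partial_p b)$ lie in $\KK_{p-1}$, so that $\partial_p$ must be applied to $\vartheta_\Psi(b)$ before the product is formed. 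This is exactly what makes the resulting relation match the twisted action \eqref{act1}, with $\partial_p\vartheta_\Psi(b)$ in the final factor, rather than naive conjugation by $\vartheta_\Psi(\partial_p b)$, and it is the same bookkeeping that underlies the proof of Lemma \ref{lem11}.
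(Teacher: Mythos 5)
Your proposal is correct and is in substance the paper's own argument: the paper states the proposition without a separate proof, delegating the normal form $\Psi=\vartheta_\Psi*$ to the discussion immediately preceding the statement (with a citation of Gugenheim \cite[6.2 p.~100]{MR112135}) and the condition \eqref{mort} to Lemma \ref{lem11}, and your computation simply makes those two steps explicit. Your degree-shift bookkeeping in the last-face identity $\vartheta_\Psi(\partial_p b)\,\rho_1(b)=\rho_2(b)\,\partial_p\vartheta_\Psi(b)$ agrees with the diagram form of \eqref{act1} and with the computations in the proof of Lemma \ref{lem11} (note that the paper's displayed formula for $\vartheta*\rho$, whose final factor reads $\vartheta(\partial_p(b))^{-1}$ rather than $(\partial_p\vartheta(b))^{-1}$, contains a typo), so the point you flag as demanding care is handled correctly.
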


\begin{prop}
\label{gug12}
Let $\KK_1,\KK_2$ be simplicial groups,
 $B_1,B_2$ simplicial sets,
$\rho_1\colon B_1 \to \KK_1$ and $\rho_2\colon B_2 \to \KK_2$
twisting functions,
  $\phi\colon \KK_1 \to \KK_2$ a morphism of simplicial groups,
 $\chi \colon B_1 \to B_2$ a morphism of simplicial sets,
and $\vartheta \colon  B_1 \to \KK_2$
a degree zero morphism of the underlying graded  sets such that
\begin{equation}
\vartheta* (\phi  \rho_1) = \rho_2  \chi \colon B_1 \to \KK_2.
\label{subj1}
\end{equation}
Then the  composite
\begin{equation}
[\chi,\vartheta,\phi] \colon
B_1\times_{\rho_1} \KK_1 \stackrel{\Id \times \phi}\longrightarrow 
B_1\times_{\phi  \rho_1} \KK_2\stackrel {\vartheta*}
\longrightarrow 
B_1\times_{ \rho_2  \chi} \KK_2
\stackrel{\chi \times \Id }\longrightarrow 
B_2\times_{\rho_2} \KK_2
\label{compo2}
\end{equation}
is a $(\phi,\chi)$-morphism  of simplicial principal bundles, and every  
 $(\phi,\chi)$-morphism of simplicial principal bundles from
$B_1\times_{\rho_1} \KK_1$ to $B_2\times_{\rho_2} \KK_2$
 arises in this manner from a suitable degree zero
morphism $ \vartheta \colon B_1 \to \KK_2$
of the underlying graded sets subject to {\rm \eqref{mort}} 
and {\rm \eqref{subj1}}.
\end{prop}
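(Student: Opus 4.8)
The plan is to transcribe the proof of Proposition~\ref{gug2}, with twisting cochains replaced by twisting functions, the cap action replaced by the action \eqref{act1}, and the $(\Id,\Id)$-classification of Proposition~\ref{gug3} replaced by that of Proposition~\ref{gug13}. Throughout I would use two preliminary observations, both immediate from \eqref{enjoy3} and \eqref{enjoy4}: since $\phi$ is a morphism of simplicial groups and $\rho_1$ a twisting function, the composite $\phi\rho_1$ is again a twisting function; and since $\chi$ is a morphism of simplicial sets and $\rho_2$ a twisting function, the composite $\rho_2\chi$ is again a twisting function. This is what makes the intermediate twisted cartesian products in \eqref{compo2} meaningful.

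For the first assertion I would verify that each of the three arrows in \eqref{compo2} is a morphism of simplicial principal bundles. The map $\Id\times\phi$ is plainly $\KK_1$-equivariant via $\phi$ and covers $\Id_{B_1}$; its compatibility with the last face operator reduces to the identity $\phi(\rho_1(b)\pd_p x)=\phi(\rho_1(b))\phi(\pd_p x)$, valid because $\phi$ is simplicial and multiplicative, so that $\Id\times\phi$ is a $(\phi,\Id)$-morphism. Symmetrically, $\chi\times\Id$ is an $(\Id,\chi)$-morphism. Finally $\vartheta*$ is an $(\Id,\Id)$-morphism by Proposition~\ref{gug13}: the hypothesis \eqref{subj1} is precisely the relation $\vartheta*(\phi\rho_1)=\rho_2\chi$ required there, and, $\rho_2\chi$ being a twisting function, Lemma~\ref{lem11} then guarantees that $\vartheta$ also satisfies \eqref{mort}. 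A composite of bundle morphisms being a bundle morphism, $[\chi,\vartheta,\phi]$ is a $(\phi,\chi)$-morphism.

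For the converse, let $F\colon B_1\times_{\rho_1}\KK_1\to B_2\times_{\rho_2}\KK_2$ be an arbitrary $(\phi,\chi)$-morphism. Since $F$ covers $\chi$ and is $\KK_1$-equivariant via $\phi$, putting $\vartheta=\pr_{\KK_2}\circ F\circ(\Id\times\{e\})\colon B_1\to\KK_2$ and using $F(b,x)=F((b,e)\cdot x)=F(b,e)\cdot\phi(x)$ forces $F(b,x)=(\chi(b),\vartheta(b)\phi(x))$ on underlying graded sets. A direct inspection then shows that, on underlying graded sets, this is exactly the composite $(\chi\times\Id)\circ(\vartheta*)\circ(\Id\times\phi)$. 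Hence, once it is known that the middle map $\vartheta*\colon B_1\times_{\phi\rho_1}\KK_2\to B_1\times_{\rho_2\chi}\KK_2$ is itself a morphism of simplicial principal bundles, Proposition~\ref{gug13} identifies it as $\vartheta*$ for a unique $\vartheta$ subject to \eqref{mort} and to $\vartheta*(\phi\rho_1)=\rho_2\chi$, i.e.\ \eqref{subj1}, whence $F=[\chi,\vartheta,\phi]$, as claimed.

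The one genuine computation, and the main obstacle, is therefore to see that $F$ being simplicial is equivalent to $\vartheta*$ being simplicial. I would compare $F$ and $\vartheta*$ one structure map at a time, evaluating the $\KK_2$-component at the unit $x=e$; this suffices because the two sides carry a common right factor $\phi(\pd_j x)$ that disappears there. For the degeneracies and the faces $\pd_j$ with $0\le j<p$, which do not see the twisting, this translates simpliciality of $F$ into the statement that $\vartheta$ commutes with degeneracies and with non-last faces, and these are exactly the corresponding conditions for $\vartheta*$; via the simplicial identity $\pd_{p-1}\pd_{p-1}=\pd_{p-1}\pd_p$ they also make \eqref{mort} automatic. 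For the last face the twisting enters, and equating the $\KK_2$-components of $F\pd_p$ and $\pd_p F$ at $x=e$ produces precisely the identity expressing that $\vartheta*$ commutes with the last face of $B_1\times_{\phi\rho_1}\KK_2$ and $B_1\times_{\rho_2\chi}\KK_2$; by the definition \eqref{act1} of the action, this identity is nothing but \eqref{subj1}. This yields the equivalence, completes the factorization, and finishes the proof.
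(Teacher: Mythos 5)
Your proof is correct and takes essentially the same route as the paper: the paper's own proof of Proposition \ref{gug12} just says the argument is similar to that of Proposition \ref{gug2} and leaves the details to the reader, and what you do — factor the given $(\phi,\chi)$-morphism on underlying graded sets as $(\chi\times\Id)\circ(\vartheta*)\circ(\Id\times\phi)$ and classify the middle arrow via Lemma \ref{lem11} and Proposition \ref{gug13} — is exactly that intended argument with the details supplied. Your extra care at the one point the paper glosses over (checking, structure map by structure map and cancelling the common right factor $\phi(\pd_j x)$, that simpliciality of the composite is equivalent to simpliciality of $\vartheta*$, with the last face yielding precisely \eqref{subj1} and the simplicial identity $\pd_{p-1}\pd_p=\pd_{p-1}\pd_{p-1}$ yielding \eqref{mort}) is sound and is needed since the outer arrows are in general neither injective nor surjective.
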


\begin{proof}
The argument is similar to that for the proof of Proposition \ref{gug2}.
We leave the details to the reader.
\end{proof}

\begin{thm}
\label{thm4}
For a simplicial set $B$ and a simplicial group $\KK$,
the assignment to a twisting function  $\rho \colon B \to \KK$
of  the $\KK$-principal twisted cartesian product
$B \times_\rho \KK$ 
induces a bijection between $\DDD(B,\KK)=\TTT(B,\KK)/\Mort(B,\KK)$
and isomorphism classes of
simplicial principal $\KK$-bundles on $B$.
Thus the value $\DDD(B,\KK)$ of  Berikashvili's functor $\DDD$ 
on $(B,\KK)$ 
parametrizes isomorphism classes of
simplicial principal $\KK$-bundles on $B$.

\end{thm}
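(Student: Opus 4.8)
The plan is to mirror the argument for Theorem \ref{param1}, replacing the twisted-tensor-product dictionary of Section \ref{bundles} by its simplicial counterpart developed in this section. The assignment $\rho \mapsto B \times_\rho \KK$ first has to be shown to descend to a well-defined map from $\DDD(B,\KK)$ to the set of isomorphism classes of simplicial principal $\KK$-bundles on $B$, and then this map has to be shown to be bijective. Surjectivity is essentially immediate: by the discussion preceding Proposition \ref{gug11}, every simplicial principal (right) $\KK$-bundle on $B$ is isomorphic to one of the form $B \times_\rho \KK$, and Proposition \ref{gug11} identifies, for a bundle structure on $B \times \KK$, the twisting function $\rho$ it carries, uniquely. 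Hence every isomorphism class lies in the image.

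For well-definedness on orbits and for injectivity I would invoke Proposition \ref{gug13} together with Lemma \ref{lem11}. First, if $\rho_2 = \vartheta * \rho_1$ for some $\vartheta \in \Mort(B,\KK)$, then the map $\vartheta *$ of Proposition \ref{gug13} is an $(\Id,\Id)$-isomorphism $B \times_{\rho_1} \KK \to B \times_{\rho_2} \KK$, so $\rho_1$ and $\rho_2$ determine the same isomorphism class; this shows that the map factors through the $\Mort(B,\KK)$-orbits. Conversely, suppose $B \times_{\rho_1} \KK$ and $B \times_{\rho_2} \KK$ are isomorphic as simplicial principal $\KK$-bundles on $B$, that is, via an $(\Id,\Id)$-morphism. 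By Proposition \ref{gug13} any such morphism equals $\vartheta *$ for a uniquely determined degree zero morphism $\vartheta \colon B \to \KK$ subject to \eqref{mort}, hence $\vartheta \in \Mort(B,\KK)$, and with $\vartheta * \rho_1 = \rho_2$. Therefore $\rho_1$ and $\rho_2$ lie in the same $\Mort(B,\KK)$-orbit, which is precisely injectivity.

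The one point that requires care — and the only place where the simplicial setting differs in substance from the chain-level statement — is the interplay between the bare action \eqref{act1} of $\Mor_0(B,\KK)$ on $\Mor_{-1}(B,\KK)$ and its restriction to an action of $\Mort(B,\KK)$ on $\TTT(B,\KK)$. A general degree zero $\vartheta$ need not carry a twisting function to a twisting function; Lemma \ref{lem11} pins down that $\vartheta * \rho_1$ again satisfies the twisting-function identity \eqref{enjoy3} exactly when $\vartheta$ satisfies \eqref{mort}, which is the defining condition for membership in $\Mort(B,\KK)$. Thus $\TTT(B,\KK)/\Mort(B,\KK)$ really is the correct orbit set, and I expect the verification of this compatibility — that the isomorphisms produced by Proposition \ref{gug13} are exactly the orbit moves permitted by $\Mort(B,\KK)$ — to be the main, albeit routine, technical obstacle. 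Once it is in place, combining surjectivity from Proposition \ref{gug11} with the two-sided correspondence of Proposition \ref{gug13} yields the asserted bijection.
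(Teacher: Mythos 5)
Your proposal is correct and follows essentially the same route as the paper: the paper's proof likewise deduces well-definedness and injectivity on $\Mort(B,\KK)$-orbits from Proposition \ref{gug13} (with Lemma \ref{lem11} underlying the orbit structure) and surjectivity from Proposition \ref{gug11} together with the classical fact that every simplicial principal $\KK$-bundle is a twisted cartesian product. Your additional remarks on why $\TTT(B,\KK)/\Mort(B,\KK)$ is the correct orbit set merely make explicit what the paper leaves implicit.
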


\begin{proof}
Proposition \ref{gug13}
implies that  twisting functions
in the same $\Mort(B,\KK)$-orbit
 determine isomorphic simplicial principal
$(B,\KK)$-bundles, that is,
the map from 
$\DDD(B,\KK)$
to the set of isomorphism classes
of simplicial principal $\KK$-bundles on $B$s
is well defined and that, furthermore,
 this map 
is injective.
 Proposition \ref{gug11}
implies that this map 
is surjective.
\end{proof}

Recall that $|R(B\times \KK) |$, $|RB|$ and $|R\KK|$
denote the normalized chain complexes, 
each one, endowed with the Alexander-Whitney diagonal,
a differential graded coalgebra, and the latter furthermore,
 an augmented  differential graded algebra
under the multiplication map  which the group structure induces.
Consider the Eilenberg-Zilber contraction
\begin{equation}
 (\negthickspace\xymatrix{|RB|\otimes |R\KK|
\ar@<0.5ex>[r]^{\iota\phantom{a}}
&\ar@<0.5ex>[l]^{\ppi\phantom{a}}
{|R(B\times \KK)|} ,
h 
}\negthickspace) 
\label{ezcon1}
\end{equation}
\cite[Theorem 2.1 p.~51]{MR0065162},
written in \cite[Section 4 p.~404]{MR0301736}, with $\KK$ substituted 
for $F$,   as
\begin{equation}
 (\negthickspace\xymatrix{B\otimes \KK
\ar@<0.5ex>[r]^{\nabla\phantom{a}}
&\ar@<0.5ex>[l]^{f\phantom{a}}
{B\times \KK} ,
\Phi
}\negthickspace) .
\end{equation}
Let $\rho \colon B \to \KK$ be  twisting function. It
determines the perturbation
\begin{equation*}
\begin{aligned}
\pd^\rho\colon  |R(B\times \KK) | &\longrightarrow |R(B\times \KK) |,\\ 
\pd^\rho (b,x)&= (\pd_p(b),(\rho_p(b)-1) \pd_p(x)),\ (b,x) \in |R(B 
\times \KK)|_p,
\end{aligned}
\end{equation*}
of the differential $d$ on the normalized chain complex
$|R(B\times \KK) |$ of $B\times \KK$ so that
 $d_\rho = d + \pd^\rho$ is a  differential on $|R(B\times \KK) |$,
and we write the resulting chain complex as $|R(B\times_\rho \KK) |$.
The perturbation lemma 
yields a perturbation  $\dell^\rho$ 
of the tensor product differential $d^{\otimes}$
on 
$|RB|\otimes  |R\KK |$ together with 
 a contraction
\begin{equation}
(\negthickspace\xymatrix{(|RB|\otimes  |R\KK |,d^{\otimes}+ \dell^\rho)
\ar@<0.5ex>[r]^{\phantom{aaaa}\iota_\rho}
&\ar@<0.5ex>[l]^{\phantom{aaaa}g_\rho}
{|R(B\times_\rho \KK )|} ,
h_\rho 
}\negthickspace) ,
\label{contra1}
\end{equation}
and 
\cite[4.4 Lemma and  4.5 Lemma,   Section 4]{MR0301736}
assert that $(|RB|\otimes  |R\KK |,d^{\otimes}+ \dell^\rho)$ is a principal twisted object
or, equivalently, a $(|RB|, |R\KK |)$-bundle.
By Proposition \ref{gug1}, the composite
\begin{equation}
\tau^\rho\colon |RB| \stackrel{\Id \otimes \eta} 
\longrightarrow |RB| \otimes |RK| 
\stackrel{\dell^\rho} \longrightarrow  |RB| \otimes |RK|  \
\stackrel{\varepsilon \otimes \Id} \longrightarrow |RK|
\end{equation}
is a twisting cochain,
 the differential
 $d^{\otimes}+\dell^\rho$ is the twisted differential
 $d^{\tau^\rho}=d^{\otimes} +\tau^\rho \cap$,
see also \cite[p.~410]{MR0301736}, and the contraction \eqref{contra1}
takes the form
\begin{equation}
(\negthickspace\xymatrix{|RB|\otimes_{\tau^\rho}  |R\KK |
\ar@<0.5ex>[r]^{\iota_\rho}
&\ar@<0.5ex>[l]^{g_\rho}
{|R(B\times_\rho \KK )|} ,
h_\rho 
}\negthickspace) .
\label{contra2}
\end{equation}
By \eqref{1.1.1},
\begin{align*}
\dell^\rho &= \sum_{n\geq 1} \dell^\rho_n,
\
\dell^\rho_j=  \ppi\partial^\rho (h\partial^\rho)^{j-1}\iota 
=\ppi(\partial^\rho h)^j\partial^\rho\iota , \ j \geq 1.
\end{align*}
Thus, with the notation
\begin{equation*}
\tau^\rho_j\colon |RB| \stackrel{\Id \otimes \eta} 
\longrightarrow |RB| \otimes |RK| 
\stackrel{\dell^\rho_j} \longrightarrow  |RB| \otimes |RK|  
\stackrel{\varepsilon \otimes \Id} \longrightarrow |RK|,\ j \geq 1,
\end{equation*}
the twisting cochain $\tau^\rho$ takes the form
\begin{align}
\tau^\rho &= \sum_{j\geq 1} \tau^\rho_j.
\label{form1}
\end{align}

We now recall the \lq\lq Serre filtrations\rq\rq:
A member $(b,x)$ of $B \times \KK$ 
has filtration $\leq p$ when $b$ is the degeneration of 
 a member of $B_p$, and
\begin{equation*}
F_p(|RB| \otimes |R\KK|) = \sum_{0 \leq i \leq p} |RB|_i \otimes |R\KK|.
\end{equation*}
In terms of the Moore complexes, with the notation
$\pr \colon R(B \times_\rho K) \to RB$ for the projection,
\begin{align*}
F_p |RB| =F_p \Moo(RB) &=  \sum_{j \leq p} \Moo_j(RB)
\\
F_p \Moo(R(B \times_\rho K)) &= \pr^{-1} F_p \Moo(RB) . 
\end{align*}

\begin{thm}
\label{thm5}
For a simplicial set $B$ and a simplicial group $\KK$, 
the assignment to a twisting function $\rho \colon B \to \KK$
of the twisting cochain  $\tau^\rho \colon |RB|\to |R\KK|$ 
induces a map
\begin{equation}
\TTT(B,\KK) \longrightarrow \TTT_{\mathrm{aug}}(|RB|,|R\KK|)
\label{map1}
\end{equation}
that is natural in the data
in the following sense:
Let  $\rho_1\colon B \to \KK$ be a twisting function,
let $\vartheta \colon B \to \KK$ be a morphism of simplicial sets
subject to \eqref{mort},
and let $\rho_2 = \vartheta*\rho_1$, necessarily a twisting function.
Then $\vartheta$ induces  an augmented  homotopy
\begin{equation*}
h^\vartheta \colon \tau^{\rho_1}\simeq \tau^{\rho_2}
\colon |RB| \longrightarrow |RK|
\end{equation*}
 of twisting cochains
such that $ h^\vartheta \cap \colon 
|RB| \otimes_{\tau_1} |RK| \to|RB| \otimes_{\tau_2} |RK| $
is an $(\Id,\Id)$-isomorphism of augmented  $(|R B|, |R K|)$-bundles.
Hence the map {\rm \eqref{map1}}
passes to a  map
\begin{equation}
\DDD(B,\KK) \longrightarrow \DDD_{\mathrm{aug}}(|RB|,|R\KK|)
\end{equation}
and hence to a map from isomorphism classes of
simplicial principal $\KK$-bundles on $B$
to the isomorphism classes
of augmented  $(|R B|, |R K|)$-bundles.
\end{thm}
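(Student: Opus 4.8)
The plan is to dispose first of the opening assertion and then to obtain the naturality statement by transporting, through the perturbed Eilenberg--Zilber contractions \eqref{contra2}, the bundle isomorphism that $\vartheta$ already produces at the simplicial level. That $\tau^\rho$ is a twisting cochain is precisely the conclusion of the discussion preceding the statement, where Proposition \ref{gug1} is applied to the bundle differential $d^\otimes+\dell^\rho$ delivered by the perturbation lemma. To see that $\tau^\rho$ lands in $\TTT_{\mathrm{aug}}(|RB|,|R\KK|)$, I would note that the perturbation $\pd^\rho(b,x)=(\pd_p(b),(\rho_p(b)-1)\pd_p(x))$ takes values in $|RB|\otimes\overline{|R\KK|}$, where $\overline{|R\KK|}=\ker(\varepsilon)$ is the augmentation ideal, because $\rho_p(b)-1\in\overline{|R\KK|}$; a routine check then propagates this through the series \eqref{1.1.1} defining $\dell^\rho$ and yields $\varepsilon\tau^\rho=0$. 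Hence $\rho\mapsto\tau^\rho$ defines the map \eqref{map1}.

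For the naturality clause, let $\rho_1$, $\vartheta$ (subject to \eqref{mort}) and $\rho_2=\vartheta*\rho_1$ be as stated. By Proposition \ref{gug13}, $\vartheta$ determines an $(\Id,\Id)$-isomorphism $\vartheta*\colon B\times_{\rho_1}\KK\to B\times_{\rho_2}\KK$ of simplicial principal $\KK$-bundles, given on underlying graded sets by $(b,x)\mapsto(b,\vartheta(b)x)$; in particular $\vartheta*$ covers $\Id_B$, is right $\KK$-equivariant, and preserves the Serre filtration. Applying the normalized chain functor produces a filtration-preserving chain isomorphism $\Theta=|R(\vartheta*)|\colon|R(B\times_{\rho_1}\KK)|\to|R(B\times_{\rho_2}\KK)|$. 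With the contractions \eqref{contra2} for $\rho_1$ and for $\rho_2$ in hand, the plan is to form the composite
\begin{equation*}
\Psi=g_{\rho_2}\circ\Theta\circ\iota_{\rho_1}\colon|RB|\otimes_{\tau^{\rho_1}}|R\KK|\longrightarrow|RB|\otimes_{\tau^{\rho_2}}|R\KK|,
\end{equation*}
which is a chain map, being a composite of chain maps.

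The hard part is to verify that $\Psi$ is an $(\Id,\Id)$-morphism of $(|RB|,|R\KK|)$-bundles, that is, that it respects the standard graded right $|R\KK|$-module and left $|RB|$-comodule structures (which by Proposition \ref{gug1} are the extended ones, independent of the twisting). The comodule half is straightforward: $\vartheta*$ covers $\Id_B$, so $\Theta$ is a map of $|RB|$-comodules, and the Eilenberg--Zilber maps entering $\iota_{\rho_1}$ and $g_{\rho_2}$ are compatible with the Alexander--Whitney coproduct. The module half is the genuine obstacle: one must know that the (twisted) Eilenberg--Zilber contraction intertwines the right $\KK$-action on $|R(B\times_\rho\KK)|$ with the extended $|R\KK|$-action on the twisted tensor product, which is the multiplicative refinement of the Eilenberg--Zilber theorem underlying \cite[4.4 Lemma, 4.5 Lemma, Section 4]{MR0301736}; granting this and using that $\vartheta*$ is $\KK$-equivariant, $\Psi$ is $|R\KK|$-linear. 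Proposition \ref{gug3} then yields a unique homotopy $h^\vartheta\colon\tau^{\rho_1}\simeq\tau^{\rho_2}$ with $\Psi=h^\vartheta\cap$. Because $\vartheta*$ preserves augmentations (in degree $0$ one has $\varepsilon(\vartheta(b)x)=\varepsilon(x)$, as $\vartheta(b)\in\KK_0$), the homotopy $h^\vartheta$ is augmented, i.e. $\varepsilon h^\vartheta=\varepsilon$; and since $\Psi$ is invertible (being built from the isomorphism $\Theta$), $h^\vartheta$ is invertible in the convolution algebra $(\Hom(|RB|,|R\KK|),\cup)$. Indeed, under the bijection of Proposition \ref{gug3} the operator $(\cdot)\cap$ is injective and turns composition into convolution, with unit $(\eta\varepsilon)\cap=\Id$, so that $\Psi^{-1}=h'\cap$ forces $h^\vartheta\cup h'=\eta\varepsilon=h'\cup h^\vartheta$.

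Invertibility of $h^\vartheta$ turns \eqref{htw} into \eqref{htw2}, which reads $\tau^{\rho_2}=h^\vartheta*\tau^{\rho_1}$ with $h^\vartheta\in\GGG_{\mathrm{aug}}(|RB|,|R\KK|)$; thus $\tau^{\rho_1}$ and $\tau^{\rho_2}$ define the same class in $\DDD_{\mathrm{aug}}(|RB|,|R\KK|)$. Consequently $\rho\mapsto[\tau^\rho]$ is constant on the $\Mort(B,\KK)$-orbits and descends to the asserted map $\DDD(B,\KK)\to\DDD_{\mathrm{aug}}(|RB|,|R\KK|)$. Composing on the source with the bijection of Theorem \ref{thm4} and on the target with that of Theorem \ref{param1} then yields the induced map from isomorphism classes of simplicial principal $\KK$-bundles on $B$ to isomorphism classes of augmented $(|RB|,|R\KK|)$-bundles, as claimed. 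I expect the only genuinely nontrivial input to be the multiplicative compatibility of the Eilenberg--Zilber contraction invoked above; the remaining steps are bookkeeping already packaged in Propositions \ref{gug1}, \ref{gug3} and \ref{gug13}.
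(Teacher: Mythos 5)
Your route is essentially the paper's own: the paper likewise forms $\Psi^\vartheta = g_{\rho_2}\,|R(\vartheta*)|\,\iota_{\rho_1}$ from the perturbed Eilenberg--Zilber contractions \eqref{contra2}, verifies that it is a morphism of augmented $(|RB|,|R\KK|)$-bundles, and extracts $h^\vartheta$ via Proposition \ref{gug3}. The one point where the paper does genuine work --- the module and comodule compatibility of $\Psi^\vartheta$ --- it settles not by citation but by naturality of the Eilenberg--Zilber contraction and of the perturbation lemma applied to the auxiliary objects $B\times\KK\times\KK$ and $B\times B\times\KK$, encoded in two commutative diagrams; your appeal to the multiplicative refinement of the Eilenberg--Zilber theorem in Gugenheim is the same idea, outsourced, and is acceptable.

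There is, however, one incorrect step: your justification that $\Psi$ is invertible ``being built from the isomorphism $\Theta$'' fails, because $g_{\rho_2}$ and $\iota_{\rho_1}$ are only contraction data, not mutually inverse isomorphisms. From \eqref{contra2} one has $g_\rho\iota_\rho=\Id$ but only $\iota_\rho g_\rho=\Id + Dh_\rho$, so the natural candidate inverse $g_{\rho_1}\Theta^{-1}\iota_{\rho_2}$ composes with $\Psi$ to $\Id + D\bigl(g_{\rho_1}\Theta^{-1}h_{\rho_2}\Theta\,\iota_{\rho_1}\bigr)$, i.e.\ it is merely a chain-homotopy inverse. This matters, since the descent to $\DDD_{\mathrm{aug}}(|RB|,|R\KK|)$ genuinely requires $h^\vartheta\in\GGG_{\mathrm{aug}}(|RB|,|R\KK|)$: it is \eqref{htw2} that converts the homotopy into the orbit relation $\tau^{\rho_2}=h^\vartheta*\tau^{\rho_1}$. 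The repair is elementary but must be made. On a vertex $v$ one computes $h^\vartheta(v)=\vartheta(v)\in\KK_0$, a group element and hence invertible in $R\KK_0$; since, by the Alexander--Whitney diagonal, $(h^\vartheta\cup k)(\sigma)=\vartheta(v_0)\,k(\sigma)+(\text{terms involving }k\text{ on proper back faces of }\sigma)$ for a $p$-simplex $\sigma$ with first vertex $v_0$, a convolution inverse $k$ of $h^\vartheta$ is constructed by induction on simplicial degree (similarly for a left inverse, and the two coincide). Equivalently: $\Psi^\vartheta$ preserves the filtration by $|RB|$-degree and induces on the associated graded the map $\sigma\otimes a\mapsto\sigma\otimes\vartheta(v)a$, $v$ the last vertex of $\sigma$, which is an isomorphism, whence so is $\Psi^\vartheta$. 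Note that you cannot instead quote the cocompleteness proposition of Section \ref{bundles}: $|RB|$ is in general not cocomplete when $B$ has more than one vertex, and that proposition concerns the coaugmented, not the augmented, normalization. With this step repaired, the rest of your argument (augmentedness of $\tau^\rho$, which the paper dismisses as immediate, and the passage to $\DDD(B,\KK)\to\DDD_{\mathrm{aug}}(|RB|,|R\KK|)$) matches the paper's proof, which itself leaves the isomorphism claim to exactly this implicit filtration argument.
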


\begin{proof} The first claim is immediate.

Consider a twisting function $\rho_1\colon B \to \KK$,
let $\vartheta \colon B \to \KK$ be a morphism of simplicial sets
subject to \eqref{mort},
and let $\rho_2 = \vartheta*\rho_1$, necessarily a twisting function.
To simplify the notation, 
let $\tau_1 = \tau^{\rho_1}$, $\tau_2 = \tau^{\rho_2}$,
$\iota_1 = \iota_{\rho_1}$, $\iota_2 = \iota_{\rho_2}$,
$g_1 = g_{\rho_1}$, $g_2 = g_{\rho_2}$,
and let
\begin{equation*}
\Psi^\vartheta =g_2 |R(\vartheta *)| \iota_1\colon 
|RB|\otimes_{\tau_1}  |R\KK |
\longrightarrow
|RB|\otimes_{\tau_2}  |R\KK |,
\end{equation*}
necessarily a chain map.
Since $g_2 \iota_2 = |RB|\otimes_{\tau_2}  |R\KK |$, 
this map renders the diagram
\begin{equation*}
\begin{gathered}
\xymatrix{
|RB|\otimes_{\tau_1}  |R\KK | \ar[d]_{\Psi^\vartheta}
\ar[r]^{\iota_1}
&
{|R(B\times_{\rho_1} \KK )|} \ar[d]^{|R (\vartheta*)|}
\\
|RB|\otimes_{\tau_2}  |R\KK |
\ar[r]_{\iota_2}
&
{|R(B\times_{\rho_2} \KK )|}  
} 
\end{gathered}
\end{equation*}
commutative.

With $\KK \times \KK$ substituted for $\KK$, the 
Eilenberg-Zilber contraction \eqref{ezcon1}
reads
\begin{equation}
 (\negthickspace\xymatrix{|RB|\otimes |R\KK|\otimes |R\KK|
\ar@<0.5ex>[r]^{\iota_{\KK \times \KK}\phantom{}}
&\ar@<0.5ex>[l]^{\ppi_{\KK \times \KK}\phantom{}}
{|R(B\times \KK\times \KK)|} ,
h_{\KK \times \KK} 
}\negthickspace)
\label{ezcon2}
\end{equation}
and, applying the perturbation lemma yields,
for $k = 1,2$, the perturbed contraction
\begin{equation}
(\negthickspace\xymatrix{(|RB|\otimes_{\tau_k}  |R\KK |)\otimes |RK|
\ar@<0.5ex>[r]^{\widetilde \iota_k}
&\ar@<0.5ex>[l]^{\widetilde \ppi_k}
{|R((B\times_{\rho_k} \KK )\times K)|} ,
\widetilde h_k 
}\negthickspace) .
\label{contra12}
\end{equation}
In the same vein,
with $B \times B$ substituted for $B$, the 
Eilenberg-Zilber contraction \eqref{ezcon1}
reads
\begin{equation}
 (\negthickspace\xymatrix{|RB|\otimes |RB|\otimes |R\KK|
\ar@<0.5ex>[r]^{\iota_{B \times B}\phantom{}}
&\ar@<0.5ex>[l]^{\ppi_{B \times B}\phantom{}}
{|R(B\times B\times \KK)|} ,
h^{B \times B} 
}\negthickspace) 
\label{ezcon3}
\end{equation}
and, applying the perturbation lemma yields,
for $k = 1,2$, the perturbed contraction
\begin{equation}
(\negthickspace\xymatrix{|RB|\otimes(|RB|\otimes_{\tau_k}  |R\KK |)
\ar@<0.5ex>[r]^{\overline  \iota_k}
&\ar@<0.5ex>[l]^{\overline  \ppi_k}
{|R(B \times (B\times_{\rho_k} \KK)|} ,
\overline  h_k 
}\negthickspace) .
\label{contra13}
\end{equation}
Since $\vartheta*$ is an $(\Id,\Id)$-(iso)morphism of simplicial principal bundles,
the naturality of the constructions implies
that $\Psi^\vartheta$ is an $(\Id,\Id)$-(iso)morphism
of augmented $(|RB|,|RK|)$-bundles. Indeed, the diagram
\begin{equation*}
\begin{gathered}
\xymatrix{
 |RB|\otimes_{\tau_1}  |R\KK |)\otimes |RK|  
\ar[ddd]_{\Psi^\vartheta \otimes |RK|}
\ar[dr]_{\mult}
\ar[rrr]^{\widetilde \iota_1}
&&&
{|R((B\times_{\rho_1} \KK )\times \KK)|} 
\ar[ddd]^{|R ((\vartheta*)\times \KK)|}
\ar[dl]^{|R \mult|}
\\
&|RB|\otimes_{\tau_1}  |R\KK | \ar[d]_{\Psi^\vartheta}
\ar[r]^{\iota_1}
&
{|R(B\times_{\rho_1} \KK )|} \ar[d]^{|R (\vartheta*)|}
&
\\
&|RB|\otimes_{\tau_2}  |R\KK |
\ar[r]_{\iota_2}
&
{|R(B\times_{\rho_2} \KK )|} 
&
\\
|RB|\otimes_{\tau_2}  |R\KK |)\otimes |RK| \ar[ur]_{\mult}
\ar[rrr]_{\widetilde \iota_2}
&&&
{|R((B\times_{\rho_2} \KK )\times \KK)|} \ar[ul]^{|R \mult|}
} 
\end{gathered}
\end{equation*}
is commutative, and so is  the diagram
\begin{equation*}
\begin{gathered}
\xymatrix{
|RB|\otimes(|RB|\otimes_{\tau_1}  |R\KK |) 
\ar[ddd]_{|RB|\otimes\Psi^\vartheta }
\ar[rrr]^{\overline \iota_1}
&&&
{|R(B \times (B\times_{\rho_1} \KK ))|} 
\ar[ddd]^{|R (B \times (\vartheta*))|}
\\
&|RB|\otimes_{\tau_1}  |R\KK |\ar[ul]|-\Delta \ar[d]_{\Psi^\vartheta}
\ar[r]^{\iota_1}
&
{|R(B\times_{\rho_1} \KK )|}\ar[ur]|-{|R\Delta|} \ar[d]^{|R (\vartheta*)|}
&
\\
&|RB|\otimes_{\tau_2}  |R\KK |
\ar[r]_{\iota_2}\ar[dl]|-\Delta
&
{|R(B\times_{\rho_2} \KK )|} \ar[dr]|-\Delta
&
\\
|RB|\otimes(|RB|\otimes_{\tau_2}  |R\KK |)
\ar[rrr]_{\overline  \iota_2}
&&&
{|R(B \times (B\times_{\rho_2} \KK ))|} . 
} 
\end{gathered}
\end{equation*}
Since $\Psi^\vartheta$
is an $(\Id,\Id)$-morphism of augmented $(|RB|,|RK|)$-bundles,
by Proposition \ref{gug3},
we conclude that 
the composite
\begin{equation*}
h^\vartheta \colon|RB| \stackrel {|RB| \otimes \eta}
\longrightarrow |RB| \otimes |RK|
 \stackrel {\Psi^\vartheta}
\longrightarrow |RB| \otimes |RK|
\stackrel{\varepsilon \otimes |RK|}
\longrightarrow |RK|
\end{equation*}
is  an augmented  homotopy
\begin{equation*}
h^\vartheta \colon \tau^{\rho_1}\simeq \tau^{\rho_2}
\colon |RB| \longrightarrow |RK|
\end{equation*}
 of twisting cochains
such that $\Psi^\vartheta = h^\vartheta \cap \colon 
|RB| \otimes_{\tau_1} |RK| \to|RB| \otimes_{\tau_2} |RK| $.
\end{proof}

\begin{rema}
{\rm The  question emerges under which circumstances
the map from isomorphism classes of
simplicial principal $\KK$-bundles on $B$
to the isomorphism classes
of augmented  $(|R B|, |R K|)$-bundles
in Theorem \ref{thm5}
is an injection, surjection, or bijection.
}
\end{rema}

\begin{cor}
\label{cor6}
Let  $\chi \colon B_1 \to B_2$ be a morphism of simplicial sets,
 $\phi \colon \KK_1 \to \KK_2$ a morphism
of simplicial groups,
$\rho_1\colon B_1 \to \KK_1$ 
and $\rho_2\colon B_2 \to \KK_1$ 
twisting functions.
Then
  a degree zero morphism $\vartheta \colon B_1 \to \KK_2$
of the underlying graded sets subject to \eqref{mort} and \eqref{subj1} 
induces  an augmented  homotopy
\begin{equation*}
h^\vartheta \colon \phi \tau^{\rho_1}\simeq \tau^{\rho_2}\chi
\colon |RB_1| \longrightarrow |R\KK_2|
\end{equation*}
 of twisting cochains
such that, with the notation in {\rm \eqref{compo1}},
  {\rm \eqref{compo2}}, and {\rm\eqref{contra2}},
the diagram
\begin{equation*}
\begin{gathered}
\xymatrix{
|RB_1|\otimes_{\tau^{\rho_1}}  |R\KK_1| 
\ar[d]_{[|R\chi|,h^\vartheta,|R\phi|]}
\ar[r]^{\iota_{\rho_1}}
&
{|R(B_1\times_{\rho_1} \KK_1 )|} \ar[d]^{|R [\chi,\vartheta,\phi]|}
\\
|RB_2|\otimes_{\tau^{\rho_2}}  |R\KK_2 |
\ar[r]_{\iota_{\rho_2}}
&
{|R(B_2\times_{\rho_2} \KK_2 )|}  
} 
\end{gathered}
\end{equation*}
is commutative.  \qed
\end{cor}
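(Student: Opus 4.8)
The plan is to deduce the statement from Theorem~\ref{thm5} combined with the naturality of the twisting-cochain assignment $\rho \mapsto \tau^\rho$ in both variables, organized along the factorization recorded in Proposition~\ref{gug12}. Recall that the geometric map $[\chi,\vartheta,\phi]$ factors as $\Id\times\phi$, then $\vartheta*$, then $\chi\times\Id$, while the algebraic map $[|R\chi|,h^\vartheta,|R\phi|]$ factors, as in \eqref{compo1}, as $\Id\otimes|R\phi|$, then $h^\vartheta\cap$, then $|R\chi|\otimes\Id$. I would obtain the asserted commutative square by stacking the three commutative squares attached to these three steps, with the horizontal maps being the contractions $\iota_\rho$ of \eqref{contra2}.

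First I would record the naturality of $\tau^\rho$ under a morphism $\phi\colon \KK_1\to\KK_2$ of simplicial groups. The Eilenberg--Zilber contraction \eqref{ezcon1} is natural in $\KK$, and by Lemma~\ref{ordinary} the perturbed contraction is natural in the given data; hence the only point to verify is that $\Id\times\phi\colon B_1\times\KK_1\to B_1\times\KK_2$ intertwines the perturbation $\pd^{\rho_1}$ with the perturbation $\pd^{\phi\rho_1}$. This is immediate from the defining formula $\pd^\rho(b,x)=(\pd_p b,(\rho_p(b)-1)\pd_p x)$ together with $\phi$ being a homomorphism, since then $(\Id\times\phi)\pd^{\rho_1}(b,x)=(\pd_p b,(\phi\rho_1(b)-1)\pd_p(\phi x))=\pd^{\phi\rho_1}(b,\phi x)$. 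It follows that the induced perturbations $\dell^{\rho_1}$ and $\dell^{\phi\rho_1}$ are intertwined by $\Id\otimes|R\phi|$, that $\tau^{\phi\rho_1}=\phi\tau^{\rho_1}$ (using also $|R\phi|\eta=\eta$ and $\varepsilon=\varepsilon$), and that the square with horizontal edges $\iota_{\rho_1},\iota_{\phi\rho_1}$ and vertical edges $\Id\otimes|R\phi|$ and $|R(\Id\times\phi)|$ commutes. A symmetric computation, using that $\chi$ is simplicial and that $|R\chi|$ preserves counits, shows that $\chi\times\Id$ intertwines $\pd^{\rho_2\chi}$ with $\pd^{\rho_2}$, whence $\tau^{\rho_2\chi}=\tau^{\rho_2}\chi$ and the corresponding $\iota$-square commutes.

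Next I would apply Theorem~\ref{thm5} verbatim, but to the base $B_1$, the group $\KK_2$, and the twisting functions $\phi\rho_1$ and $\rho_2\chi=\vartheta*(\phi\rho_1)$, the latter being a twisting function by \eqref{subj1} and Lemma~\ref{lem11}. This produces the augmented homotopy $h^\vartheta\colon\tau^{\phi\rho_1}\simeq\tau^{\rho_2\chi}$ together with its commutative square relating $h^\vartheta\cap$ and $|R(\vartheta*)|$ through $\iota_{\phi\rho_1}$ and $\iota_{\rho_2\chi}$. Reading this through the two naturality identities of the previous paragraph identifies it with the asserted homotopy $h^\vartheta\colon\phi\tau^{\rho_1}\simeq\tau^{\rho_2}\chi$. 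Stacking the three squares---$\phi$-naturality on top, the Theorem~\ref{thm5} square in the middle, $\chi$-naturality at the bottom---then yields the claimed diagram: the left column composes to $[|R\chi|,h^\vartheta,|R\phi|]$ by \eqref{compo1}, the right column composes to $|R[\chi,\vartheta,\phi]|$ by \eqref{compo2}, and the outer horizontal edges are $\iota_{\rho_1}$ and $\iota_{\rho_2}$.

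I expect the only real subtlety to lie in the naturality bookkeeping: one must check that the perturbation lemma is applied to contractions genuinely connected by the functorial maps $\Id\times\phi$ and $\chi\times\Id$, so that $g_\rho$, $\iota_\rho$ and $\tau^\rho$ transform strictly rather than merely up to homotopy. Once the two perturbation-intertwining identities are in place this strictness is automatic, and the remainder is the purely formal pasting of commutative squares, requiring no homotopy-theoretic input beyond Theorem~\ref{thm5}.
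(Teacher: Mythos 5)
Your proposal is correct and follows exactly the route the paper intends: the paper states Corollary \ref{cor6} with no written proof, as an immediate consequence of Theorem \ref{thm5} together with the factorizations \eqref{compo1} and \eqref{compo2}, and your pasting of the two outer naturality squares (for $\Id\times\phi$ and $\chi\times\Id$, via naturality of the Eilenberg--Zilber contraction and of the perturbation lemma) around the middle square from the proof of Theorem \ref{thm5} is precisely that argument made explicit. Your verification that $\Id\times\phi$ and $\chi\times\Id$ intertwine the perturbations, whence $\tau^{\phi\rho_1}=\phi\,\tau^{\rho_1}$ and $\tau^{\rho_2\chi}=\tau^{\rho_2}\chi$, supplies exactly the bookkeeping the paper leaves implicit.
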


Finally we consider simplicial principal bundles
having structure group 
an ordinary  group $\GG$, taken as a
trivially simplicial group.
In this case, for a simplicial set,
the defining properties 
\eqref{enjoy3} and \eqref{enjoy4}
of a twisting function
 $\rho \colon B \to \GG$
come down to \eqref{enjoy1} and \eqref{enjoy2},
with
$B$ substituted for $N\Cat$ and $\GG$ for $\Mor_\Cat$
and, $\GG$ being viewed as an ordinary group,  we use the notation 
$\TTT_{\mathrm{triv}}(B,\GG)$
for the set of maps
 $\rho \colon B \to \GG$
subject to  \eqref{enjoy1} and \eqref{enjoy2},
that is, for the set of twisting functions
from $B$ to $\GG$ when $\GG$ is taken as an ordinary group.
A straightforward verification establishes the following.
\begin{compl}
\label{triviallys}
For a simplicial set $B$ and a group $\GG$, taken as a
trivially simplicial group,
the restriction 
$\TTT(B,\GG) \to \TTT_{\mathrm{triv}}(B,\GG)$
is a bijection and, furthermore,
a member $\vartheta$ of $\Mort(B,\GG)$ satisfies the identity
\begin{align*}
\vartheta(b)&=\vartheta(\pd_0 \ldots \pd_{p-2}\pd_{p-1}(b)),\ b \in B_p,\ p \geq 1.
\end{align*}
Hence the restriction
\begin{equation}
\Mort(B,\GG) \longrightarrow \Map(B_0,\GG)
\end{equation}
is an isomorphism of groups and, in terms of the induced action
of $\Map(B_0,\GG)$ on $\TTT_{\mathrm{triv}}(B,\GG)$, 
\begin{equation}
\DDD(B,\GG)=\TTT_{\mathrm{triv}}(B,\GG)/\Map(B_0,\GG). \qed
 \end{equation} 
\end{compl}

\begin{rema}
\label{W}
{\rm
Let $K$ be a simplicial group. Recall that the $W$-construction
yields the universal  simplicial $K$-principal bundle
\cite[\S 17]{MR0056295}:
For $n \geq 0$,
\begin{equation}
(WK)_n = K_0 \times \dots \times K_n,
\end{equation}
with face and degeneracy operators given by the formulas
\begin{equation}
\begin{aligned}
\pd_0(x_0,\dots, x_n) 
&= (\pd_0x_1,\dots, \pd_0 x_n)\\
\pd_j(x_0,\dots, x_n) 
&=(x_0,\dots,x_{j-2}, x_{j-1}\pd_j x_j, \pd_j x_{j+1},\dots,\pd_j x_n),
\quad 1 \leq j \leq n
\\
s_j(x_0,\dots, x_n) 
&=(x_0,\dots,x_{j-1}, e,s_j x_j,s_j x_{j+1},\dots,s_j x_n),
\quad 0 \leq j \leq n;
\end{aligned}
\label{2.4}
\end{equation}
further,
$(\overline WK)_0 = \{e\}$ and, for $ n \geq 1$
\begin{equation}
(\overline WK)_n = K_0 \times \dots \times K_{n-1},
\end{equation}
with face and degeneracy operators given by the formulas
\begin{equation}
\begin{aligned}
\pd_0(x_0,\dots, x_{n-1}) 
&= (\pd_0x_1,\dots, \pd_0 x_{n-1}),
\\
\pd_j(x_0,\dots, x_{n-1}) 
&=(x_0,\dots,x_{j-2}, x_{j-1}\pd_j x_j, \pd_j x_{j+1},\dots,\pd_j x_{n-1}),
\\
&\qquad 1 \leq j \leq n-1,
\\
\pd_n(x_0,\dots, x_{n-1}) 
&=(x_0,\dots, x_{n-2}), 
\\
s_0(e) &= e 
\in K_0,
\\
s_j(x_0,\dots, x_{n-1}) 
&=(x_0,\dots,x_{j-1}, e,s_j x_j,s_j x_{j+1},\dots,s_j x_{n-1}),
\\
&\qquad 0 \leq j \leq n.
\end{aligned}
\label{2.5}
\end{equation}
The formulas \eqref{2.4} and \eqref{2.5} are consistent with those in 
\cite[A.14]{MR394720} for a simplicial algebra;
they differ from those in \cite{MR279808} (pp. 136 and 161)
where the constructions are carried out
with structure group acting from the {\em left\/}.
The maps
\begin{equation}
\rho_n = \pr_{K_{n-1}}\colon (\overline WK)_n =K_0 \times \dots \times K_{n-1}
\to K_{n-1},\ n \geq 1,
\end{equation}
assemble to a twisting function $\rho \colon \overline WK \to K$,
and the canonical map
$(\overline WK) \times_\rho K \to WK$
is an isomorphism of right $K$-simplicial sets.
Any simplicial principal $K$-bundle $P \to B$
admits a classifying map $B \to\overline WK$
and can hence be written as  
a twisted Cartesian product of the base $B$ with $K$.
For a modern account of the $W$-construction, 
with structure group acting from the left,
see
\cite[\S V.4 p.~269]{MR1711612}.

}
\end{rema}

\section{Principal bundles on the nerve of a category}
\label{pbs}

Let  $G$ be a group (discrete, topological, Lie, according 
to the case considered), viewed as a 
category with a single object, and let
$F\colon \Cat \to G$ be a functor.
Thus  $F$ assigns to every 
object of $\Cat$
 the identity element of $G$
and to every morphism $f \colon x_0\to x_1$ of
$\Cat$ a group element $F(f) \in G$
such that, whenever
$f_1\colon x_0\to x_1$ and $f_2\colon x_1\to x_2$,
\[
F(f_1f_2) = F(f_1)F(f_2).
\]
Since $F$ is a functor, this assignment extends to a morphism
\[
NF\colon N \Cat \longrightarrow NG
\]
of simplicial objects.
The pullback diagram
\begin{equation}
\begin{gathered}
\xymatrix{
\PPP_{\Cat,F} \ar[d]_{\Pi_F} \ar[r]^{F_\GG}& \PPP \GG \ar[d]^{\Pi}
\\
\Cat \ar[r]_{F} & \GG
}
\end{gathered}
\end{equation}
of categories characterizes the pullback category $\PPP_{\Cat,F}$.
By construction, the group $\GG$ acts freely from the right 
on $\PPP_{\Cat,F}$, and the canonical functor
$\PPP_{\Cat,F}/\GG \to \Cat$ is an isomorphism of categories.
Hence the nerve construction yields the simplicial principal right $\GG$-bundle
\begin{equation}
N\Pi_F \colon N(\PPP_{\Cat,F}) \longrightarrow N\Cat.
\label{nerveBG}
\end{equation}
The composite 
$\rho(N F) \colon N\Cat \to \GG$
of the induced morphism $N F \colon  N\Cat\to  N\GG$
of simplicial sets with the universal twisting function
$\rho \colon N(\GG) \to \GG$, cf. Subsection \ref{groupc},
yields the twisting function $N\Cat \to \GG$
which recovers the simplicial right $\GG$-set $N(\PPP_{\Cat,F})$,
the total object of \eqref{nerveBG}, as
the twisted cartesian product
$N\Cat \times_{\rho(N F)}\GG$ in such a way that
the bundle projection map 
$N \Pi_F\colon N\Cat \times_{\rho(N F)}\GG\to N\Cat$ 
amounts to  the canonical projection
to $N\Cat$.

The group $\Map(\ob_\Cat,\GG)$ of maps from $\ob_\Cat$ to $\GG$
with pointwise multiplication
acts on the set $\mathrm{Funct}(\Cat,\GG)$
of functors form $\Cat$ to $\GG$ via natural transformations of functors:
Consider two functors $F_1,F_2\colon \Cat \to \GG$
and a map $\Phi \colon \ob_\Cat \to \GG$
such that, for two objects $x$ and $y$ of $\Cat$ 
and a morphism $f$ from $x$ to $y$,
the diagram
\begin{equation}
\xymatrix{
F_1(x) \ar[r]^{F_1(f)} \ar[d]_{\Phi(x)}& F_1(y) \ar[d]^{\Phi(y)}
\\
F_2(x) \ar[r]_{F_2(f)} & F_2(y) 
}
\end{equation}
is commutative. Then $F_2 = \Phi(F_1)$ is the result
of the action by $\Phi \in \Map(\ob_\Cat,\GG)$ on $F_1$.

The assignment to a pair $(\Cat,\GG)$ consisting of a category $\Cat$
 and  
a group $\GG$  of
the $\Map(\ob_\Cat,\GG)$-orbits 
\begin{equation}
\DDD(\Cat,\GG)=\mathrm{Funct}(\Cat,\GG)/\Map(\ob_\Cat,\GG)
\end{equation}
is a functor from the category of
pairs of the kind  $(\Cat,\GG)$
to the category of sets.
This functor is the present version of 
{\em Berikashvili\/}'s functor.

\begin{thm} 
\label{bericat}
For a category
$\Cat$ and a group $\GG$,
the assignment to a functor $F \colon \Cat \to \GG$
of the simplicial principal bundle $N\Pi_F$,
cf. {\rm \eqref{nerveBG}}, induces  an injection  from $\DDD(\Cat,\GG)$
to the set of isomorphism classes of
simplicial principal $\GG$-bundles on 
$N\Cat$.
Thus the value $\DDD(\Cat,\GG)$ 
of Berikashvili's functor $\DDD$ on $(\Cat,\GG)$ 
parametrizes the isomorphism classes
of simplicial principal $\GG$-bundles on  $N\Cat$
of the kind $N\Pi_F$, for some functor $F \colon \Cat \to \GG$.
\end{thm}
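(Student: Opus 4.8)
The plan is to deduce the theorem from Theorem~\ref{thm4} and Complement~\ref{triviallys} by recognizing the assignment $F\mapsto N\Pi_F$ as the map on orbit sets induced by $F\mapsto\rho(NF)$. First I would invoke the reduction already prepared before the statement: for a functor $F\colon\Cat\to\GG$ the twisting function $\rho(NF)\colon N\Cat\to\GG$ recovers the total object of \eqref{nerveBG} as the twisted cartesian product $N\Cat\times_{\rho(NF)}\GG$. Viewing $\GG$ as a trivially simplicial group and taking $B=N\Cat$, so that $B_0=\ob_\Cat$, Theorem~\ref{thm4} sends the isomorphism class of $N\Pi_F$ to the class of $\rho(NF)$ in $\DDD(N\Cat,\GG)$, and Complement~\ref{triviallys} computes the latter as $\TTT_{\mathrm{triv}}(N\Cat,\GG)/\Map(\ob_\Cat,\GG)$ under the isomorphism $\Mort(N\Cat,\GG)\cong\Map(\ob_\Cat,\GG)$. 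Thus the theorem reduces to showing that $F\mapsto[\rho(NF)]$ is a well-defined injection $\DDD(\Cat,\GG)\to\DDD(N\Cat,\GG)$, whose image is by construction the set of classes of bundles of the form $N\Pi_F$.

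The heart of the argument is a single computation on $1$-simplices. Since $N_1\Cat=\Mor_\Cat$ and, on a morphism $f$ from $x$ to $y$, one has $\pd_0[f]=x$, $\pd_1[f]=y$, and $\rho(NF)_1[f]=F(f)$, the assignment $F\mapsto\rho(NF)$ is injective, $F$ being read off from the restriction of $\rho(NF)$ to $N_1\Cat$. For the equivariance I would first observe that, because $\GG$ is trivially simplicial, the defining identity \eqref{enjoy3} expresses $\rho_p$ through $\rho_{p-1}$ via the last two face operators; hence a member of $\TTT_{\mathrm{triv}}(N\Cat,\GG)$ is determined by its restriction to $N_1\Cat$, and two such twisting functions coincide as soon as they agree on morphisms. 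Given $\Phi\in\Map(\ob_\Cat,\GG)$, let $\vartheta_\Phi\in\Mort(N\Cat,\GG)$ be its image under Complement~\ref{triviallys}, supported on vertices with $\vartheta_\Phi[f]=\Phi(x)$ by the formula $\vartheta(b)=\vartheta(\pd_0\cdots\pd_{p-1}b)$. Reading the action off the defining diagram \eqref{act1}, rather than off the inline formula, its value on $[f]$ transforms $F(f)$ by the vertex data $\Phi(x),\Phi(y)$ in exactly the pattern of the commutative naturality square defining $\Phi(F)$ in Section~\ref{pbs}, so that $(\vartheta_\Phi*\rho(NF))_1[f]=\rho(N(\Phi(F)))_1[f]$. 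By the determinacy just noted, this $1$-simplex identity upgrades to the equality of twisting functions $\vartheta_\Phi*\rho(NF)=\rho(N(\Phi(F)))$.

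This identity settles both halves at once. For well-definedness, if $[F_1]=[F_2]$ in $\DDD(\Cat,\GG)$, say $F_2=\Phi(F_1)$, then $\rho(NF_2)=\vartheta_\Phi*\rho(NF_1)$ lies in the $\Mort(N\Cat,\GG)$-orbit of $\rho(NF_1)$, so $F\mapsto[\rho(NF)]$ descends to $\DDD(\Cat,\GG)$. For injectivity, if $[\rho(NF_1)]=[\rho(NF_2)]$, choose $\vartheta\in\Mort(N\Cat,\GG)$ with $\vartheta*\rho(NF_1)=\rho(NF_2)$; writing $\vartheta=\vartheta_\Phi$ via Complement~\ref{triviallys} and restricting this equality to $N_1\Cat$ reproduces the naturality square, whence $F_2=\Phi(F_1)$ and $[F_1]=[F_2]$. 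Composing the resulting injection $\DDD(\Cat,\GG)\hookrightarrow\DDD(N\Cat,\GG)$ with the bijection of Theorem~\ref{thm4} and Proposition~\ref{gug13} then yields the asserted injection onto the isomorphism classes of bundles of type $N\Pi_F$.

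The step I expect to be the main obstacle is the $1$-simplex evaluation of the action \eqref{act1}: it requires pinning down several order conventions simultaneously — the source/target convention $\pd_0[f]=s(f)$, $\pd_1[f]=t(f)$ for the nerve, the precise placement of the inverse in the diagram \eqref{act1} (whose inline rendering appears to carry a typo), the product order in $\GG$ implicit in $F(f_1f_2)=F(f_1)F(f_2)$, and the vertex-support description from Complement~\ref{triviallys} — so that the output matches the commutative square of Section~\ref{pbs} verbatim rather than up to an inversion or a left/right swap. Once these conventions are aligned, the determinacy of twisting functions by their restriction to $N_1\Cat$ carries out the remaining bookkeeping and no further simplexwise verification is needed.
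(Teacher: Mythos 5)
Your proposal is correct and follows essentially the same route as the paper's own proof: both rest on Complement \ref{triviallys} together with Proposition \ref{gug13} (equivalently, Theorem \ref{thm4}), matching natural transformations $\Phi \in \Map(\ob_\Cat,\GG)$ with the members $\vartheta \in \Mort(N\Cat,\GG)$ that implement bundle isomorphisms over the identity of $N\Cat$. The only substantive difference is that you actually verify the key identity $\vartheta_\Phi * \rho(NF_1)=\rho(NF_2)$ — which the paper's proof asserts without computation — by the correct observation that, via \eqref{enjoy3}, a twisting function into a trivially simplicial group is determined by its restriction to $1$-simplices, and your caution about convention alignment (including the typo in the inline rendering of the action \eqref{act1}) is well placed, since the orbit-level conclusion is insensitive to the residual choice between $\Phi$ and its pointwise inverse because Complement \ref{triviallys} is a group isomorphism.
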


\begin{proof}
Consider two functors $F_1,F_2\colon \Cat \to \GG$
and a map $\Phi \colon \ob_\Cat \to \GG$
such that $F_2 = \Phi(F_1)$.
By Complement \ref{triviallys},
the map $\Phi$ determines a unique map
$\vartheta \colon N\Cat \to \GG$
subject to \eqref{mort},
and 
\begin{equation} 
\vartheta*\colon N\Cat \times_{\rho(N F_1)}\GG \longrightarrow N\Cat \times_{\rho(N F_2)}\GG
\end{equation}
yields an isomorphism of simplicial principal $\GG$-bundles
on  $N\Cat$.
By Proposition \ref{gug13},
every isomorphism $N \Pi_{F_1} \to N \Pi_{F_2}$ 
of simplicial principal $\GG$-bundles
over the identity of  $N\Cat$ arises in this manner.
\end{proof}

\begin{examp}
\label{examp1}
{\rm
Let $B$ be a simplicial complex or, more generally, simplicial set,
and let $\mathcal C_B$ be the category 
whose set of objects $\ob_B$ is the set $S$ of simplices in $B$, 
with one morphism from $x\in S$ to $y\in S$ whenever
$x \leq y$, that is, whenever $x$ is a face of $y$.
As a simplicial set,
the nerve  $N\mathcal C_B$ of $\mathcal C_B$ is the barycentric subdivision
of $B$.
Thus a functor $F\colon \Cat_B \to \GG$ assigns to every simplex of $B$,
that is, to every vertex of the barycentric subdivision
$N \mathcal C_B$ of $B$, the identity element of $G$,
and to every oriented edge (1-simplex) $x_0\subseteq x_1$ of
$N \mathcal C_B$ a group element $F(x_0\subseteq x_1) \in G$
such that, whenever
$x_0\subseteq x_1$ and $x_1\subseteq x_2$,
\[
F(x_0\subseteq x_2) = F(x_0\subseteq x_1)F(x_1\subseteq x_2).
\]
Since $F$ is a functor, this assignment extends to a morphism
\[
NF\colon N \mathcal C_B \longrightarrow NG
\]
of simplicial objects.
Since, as a simplicial set,
the nerve  $N\mathcal C_B$ of $\mathcal C_B$ is the barycentric subdivision
of $B$,
by Theorem \ref{bericat},
applying Berikashvili's functor to $(\Cat_B,\GG)$
yields a set
$\DDD(\Cat_B,\GG)$ 
parametrizing the isomorphism classes
of simplicial principal $\GG$-bundles on  
the barycentric subdivision $N\mathcal C_B$
of $B$
of the kind $N\Pi_F$, for some functor $F \colon \Cat_B \to \GG$.

The lean geometric realization $||N\mathcal C_B||$ of the nerve 
$N\mathcal C_B$ of $\mathcal C_B$ is homeomorphic to the 
lean
geometric realization
$||B||$ of $B$. When $B$ is a simplicial complex
the homeomorphism is actually natural.
The proof where $B$ is a general simplicial set
is in \cite{MR222881} together with the observation  that
the homeomorphism cannot be taken to be natural
(\lq\lq Korollar\rq\rq\  p.~508).
Given the functor $F \colon \Cat_B \to \GG$,
geometric realization yields the principal $\GG$-bundle
\begin{equation}
||N\Pi_F|| \colon ||N(\PPP_{\Cat_B,F})|| \longrightarrow ||N\Cat_B|| .
\label{nerveBG2}
\end{equation}
Hence the value
$\DDD(\Cat_B,\GG)$ 
of Berikashvili's functor on $(\Cat_B,\GG)$
parametrizes  isomorphism classes
of  principal $\GG$-bundles 
of the kind \eqref{nerveBG2}
on the geometric realization of the barycentric subdivision of $B$.
}
\end{examp}

\begin{examp}
\label{examp2}
{\rm
Let  $M$ be a (topological, smooth, analytic, algebraic, according to the case considered)  manifold and
 $\UUU = \{ U_\lambda\}_{\lambda \in \Lambda}$
an open cover
of  $M$. Consider $\UUU$ as a partially orderet set,
with order relation by inclusion.
Let $\Cat_\UUU$ be the associated category.
It has $\Ob_{\Cat_\UUU} =\UUU$
and, for two members $U$ and $V$ of $\UUU$
with $U \subseteq V$
 a morphism from $U$ to $V$.

For a subset $\sigma$ of $\Lambda$, let 
$U_\sigma =\cap_{\alpha \in \sigma} U_\alpha$. 
The family $\{U_\sigma\}$ of the non-empty $U_\sigma$
as $\sigma$ ranges over finite subsets of $\Lambda$ 
forms an open cover of $M$ as well, and
we denote this open cover by $B\UUU$.
Here we use the letter $B$ as a mnemonic for \lq barycentric subdivision\rq, see below.
For $\tau \subseteq \sigma \subseteq \Lambda$, necessarily
$U_\sigma \subseteq U_\tau$.
The associated category $\Cat_{B\UUU}$ has
 $\Ob_{\Cat_{B\UUU}} =B\UUU$
and, for two objects $U_\sigma$ and $U_\tau$ of $B\UUU$
with $\tau \subseteq \sigma \subseteq \Lambda$,
 a morphism from $U_\sigma$ to $U_\tau$.

Following \cite[\S 4]{MR0232393}, we assign to 
$\Cat_{B\UUU}$ a 
(topological, smooth, analytic, algebraic, according to the case considered)
category $\MU$ as follows
(the notation in  \cite[\S 4]{MR0232393} is $X$ for $M$ and
$\mathbf X_{\mathbf U}$ for $\MU$):
Let $\Ob(\MU)=\coprod_{\sigma \in \Lambda}U_\sigma$,
 the disjoint union of the non-empty  $U_\sigma$,
for finite subsets $\sigma$ of $\Lambda$.
Thus the objects of $\MU$ are pairs
$(x,U_\sigma)$ with $x \in U_\sigma$, for finite subsets
$\sigma$ of $\Lambda$.
Define a morphism $(x,U_\sigma) \to (y,U_\tau)$
of $\MU$ to be an inclusion
$i\colon U_\sigma \to U_\tau$
with $i(x)=y$, for $\tau \subseteq \sigma$.
Hence,
for two finite subsets $\sigma_0 \subseteq \sigma_1$ of $\Lambda$, 
 a morphism $(x_1,U_{\sigma_1}) \to (x_0,U_{\sigma_0})$
of $\MU$ is the inclusion
$i\colon U_{\sigma_1} \to U_{\sigma_0}$
with $i(x_1)=x_0$.
For an ascending sequence
$\sigma_0 \subseteq \sigma_1 \subseteq \sigma_2$
and two morphisms
 $(x_2,U_{\sigma_2}) \to (x_1,U_{\sigma_1})$ 
and
$(x_1,U_{\sigma_1}) \to (x_0,U_{\sigma_0})$
of $\MU$, the composite of
 $(x_2,U_{\sigma_2}) \to (x_1,U_{\sigma_1})$ 
and
$(x_1,U_{\sigma_1}) \to (x_0,U_{\sigma_0})$
is the morphism
\begin{equation}
c((x_2,U_{\sigma_2}) \to (x_1,U_{\sigma_1}),(x_1,U_{\sigma_1}) \to (x_0,U_{\sigma_0})) =
 (x_2,U_{\sigma_2})  \to(x_1,U_{\sigma_1}) \to (x_0,U_{\sigma_0}).
\end{equation}
Thus the disjoint union
$\coprod_{[\sigma_0 \subseteq \sigma_1]}U_{\sigma_1}$
of the non-empty  $U_{\sigma_1}$
over the
inclusions
$\sigma_0 \subseteq \sigma_1\subseteq \Lambda$
of finite subsets of $\Lambda$ parametrizes the space
$\Mor(\MU)$ of $\MU$. Consider  an inclusion
$\sigma_0 \subseteq \sigma_1\subseteq \Lambda$ 
of finite subsets of $\Lambda$
and
let $U_{\sigma_0 \subseteq \sigma_1}$ denote the constituent of
$\Mor(\MU)$ which that inclusion parametrizes.
The following arrows characterize the so far missing pieces of
structure that turn $\MU$ into a category:
\begin{align*}
s\colon U_{\sigma_0 \subseteq \sigma_1} =U_{\sigma_1}
&\stackrel{\mathrm{incl}}\longrightarrow U_{\sigma_0},
\\
t\colon U_{\sigma_0 \subseteq \sigma_1} =U_{\sigma_1} 
&\stackrel{=}\longrightarrow U_{\sigma_1},
\\
\Id\colon U_{\sigma_0}
&\stackrel{=}\longrightarrow U_{\sigma_0 \subseteq \sigma_0} .
\end{align*}
The nerve $N(\MU)$ of $\MU$ is the simplicial manifold having, for $p \geq 0$,
\begin{equation}
N(\MU)_p=\coprod_{[\sigma_0 \subseteq \ldots \subseteq \sigma_p]}U_{\sigma_p}
\end{equation}
 the disjoint union
of the non-empty  $U_{\sigma_p}$, 
parametrized  by 
 ascending  sequences 
\begin{equation}
\sigma_0 \subseteq \ldots \subseteq \sigma_p \subseteq \Lambda
\label{ascend}
\end{equation}
of finite subsets of $\Lambda$.
By construction, for an ascending sequence of subsets of $\Lambda$
of the kind \eqref{ascend}, necessarily
\begin{equation}
U_{\sigma_p} = \cap_{0 \leq j \leq p} U_{\sigma_j} .
\end{equation}
The face and degeneracy operators are the corresponding inclusions.
See also [Section 2 p.~237]\cite{MR413122}.
As a simplicial manifold, the nerve $N\MU$ of $\MU$
is the barycentric subdivision of the ordinary nerve
$N\UUU$ of $\UUU$.

A functor $F \colon \MU \to \GG$ determines the corresponding
simplicial principal $\GG$-bundle \eqref{nerveBG}, viz.
\begin{equation}
N\Pi_F \colon N(\PPP_{\MU,F}) \longrightarrow N\MU ,
\label{nerveBG3}
\end{equation}
on 
$N\MU$. Thus, by Theorem \ref{bericat},
the set 
$\DDD(\MU,\GG)$ 
of Berikashvili's functor on $(\MU,\GG)$
parametrizes the isomorphism classes
of simplicial principal $\GG$-bundles on  
$N\MU$ of the kind $N\Pi_F$, for some functor $F\colon \MU \to \GG$.

View the manifold $M$ as a trivially simplicial manifold $M$.
Then the canonical projection $\MU \to M$ is a
morphism of simplicial manifolds.
A partition of unity of $M$ subordinate to $B \UUU$ induces a section 
$\iota \colon M \to ||N\MU||$,
and $\iota$ and  the projection $||N\MU|| \to M$ 
constitute a deformation retraction.
Thus, when $M$ is topological or smooth and paracompact,
the 
value of  Berikashvili's functor on $(\MU,\GG)$
parametrizes certain isomorphism classes
of  principal $\GG$-bundles on $M$;
if, furthermore, 
every  object of
$\MU$ is contractible,
every simplicial principal bundle on
$N \MU$ arises from a functor from $\MU$ to $\GG$, and
the value $\DDD(\MU,\GG)$ of  Berikashvili's functor on 
$(\MU,\GG)$
parametrizes  isomorphism classes
of  principal $\GG$-bundles on $M$.
Below we make this observation more precise.

For a functor $F \colon \MU \to \GG$,
by construction, applying  geometric realization
to \eqref{nerveBG3}
yields the ordinary principal $\GG$-bundle
\begin{equation}
||N\Pi_F|| \colon ||N(\PPP_{\MU,F})|| \longrightarrow || N\MU ||,
\label{nerveBG4}
\end{equation}
and
 the classifying map
\begin{equation}
|| NF || \colon || N \MU || \longrightarrow  || N \GG ||
\end{equation}
thereof arises as the geometric realization of
the morphism 
$ NF  \colon  N \MU \to  N \GG $
of simplicial objects
which the functor $F$ induces.
Thus, within the present framework,
the functor $F$ determines the associated bundle and classifying map
for free.
The composite with the section  $\iota \colon M \to ||N\MU||$
arising from a
partition of unity of $M$ 
subordinate to $B \UUU$
then yields the classifying map
for the resulting $\GG$-bundle on $M$ for free.
If
every  member of
$\UUU$ is contractible,
every principal $\GG$-bundle on $M$ arises in this way.

Consider the special case where 
the open cover $\UUU$ of
$M$ has a single member, i.e., $M$ itself.
The nerve $N \UUU$ of $\UUU$
is the trivially simplicial space
associated to $M$ (having, for $p \geq 0$,  as degree $p$ constituent
a copy of $M$ and every arrow the identity).
The category $\MU$ is the 
smooth
category having $M$ as its space of objects
and the nerve  $N \MU$ of $\MU$ is, likewise,
 the trivially simplicial space
associated to $M$.
(The barycentric subdivision oa a point is still a point.)
A functor $F \colon \MU \to \GG$
assigns to each object $x\in M$ of  $\MU$
the identity $e$ of $M$
and to the single morphism $\Id_M$ of $\MU$
the identity element of $\GG$.
Hence the resulting $\GG$-bundle on $M$ is trivial.

The observation 
in \cite[\S 4]{MR0232393}
that
$\GG$-transition functions relative to 
the open cover $B\UUU$ of
$M$
amount to a functor $F \colon \MU \to \GG$
 reconciles the present characterization  of 
a principal $\GG$-bundle in terms of an open cover 
 with the more classical one.

To put flesh on the bones of the last remark,
recall a {\em system of $\GG$-valued transition functions 
on $M$ relative to the open cover\/} $\UUU$ consists of a family of maps
$g_{\lambda,\mu}\colon U_\lambda \cap U_\mu \to \GG$ 
($\lambda,\mu \in \Lambda$) subject to 
(T1) below:
\begin{enumerate}
\item[{\rm (T1)}] For  $\lambda,\mu,\nu \in \Lambda$, the diagram
\begin{equation}
\xymatrixcolsep{3pc}
\xymatrix{
(U_\lambda \cap U_\mu \cap U_\nu) \times (U_\lambda \cap U_\mu \cap U_\nu)
\ar[r]^{\phantom{aaaaaaaaaaaaa}(g_{\lambda,\mu}, g_{\mu,\nu})}&\GG \times \GG \ar[r]^\mult & \GG
\\
U_\lambda \cap U_\mu \cap U_\nu
\ar[u]^\Delta \ar[urr]_{g_{\lambda,\nu}}& &
}
\end{equation}
\end{enumerate}
is commutative;
see, e.g., \cite[2.4 Definition Section 5.2 p.~63]{MR1249482}.

A system $\{g_{\lambda,\mu}\colon U_\lambda \cap U_\mu \to \GG\}$ 
of $\GG$-valued transition functions 
on $M$ relative to $\UUU$ 
necessarily satisfies
(T2)  and (T3) below \cite[Section 5.2 p.~63]{MR1249482}:
\begin{enumerate}
\item[{\rm (T2)}]
For  $\lambda\in \Lambda$,
the map $g_{\lambda,\lambda} \colon U_\lambda  \to \GG$ factors as
\begin{equation}
g_{\lambda,\lambda} \colon U_\lambda \to \{e\} \to \GG .
\end{equation} 
\item[{\rm (T3)}]
For  $\lambda, \mu\in \Lambda$,
the maps $g_{\lambda,\mu},g_{\mu,\lambda} \colon U_\lambda\cap U_\mu  \to \GG$
coincide.
\end{enumerate}

Two systems $\{g_{\lambda,\mu}\colon U_\lambda \cap U_\mu \to \GG\}$ 
and $\{g'_{\lambda,\mu}\colon U_\lambda \cap U_\mu \to \GG\}$ 
of $\GG$-valued transition functions 
on $M$ relative to $\UUU$ are {\em equivalent\/}
 \cite[2.6 Definition Section 5.2 p.~63]{MR1249482}
 provided there exist maps $r_\lambda\colon U_\lambda \to \GG$
($\lambda \in \Lambda$)
satisfying the relations (E) below:
\begin{enumerate}
\item[{\rm (E)}]
For $\lambda,\mu \in \Lambda$ and $y \in U_\lambda \cap U_\mu$,
\begin{equation}
g'_{\lambda,\mu}(y) = r_\lambda(y)^{-1}g_{\lambda,\mu}(y) r_\mu(y) .
\end{equation}
\end{enumerate}
This relation is an equivalence relation among
systems of $\GG$-transition functions on $\UUU$.
The equivalence classes form the first non-abelian cohomology
set $\mathrm{H}^1(\UUU, \GG)$
of $\UUU$ with coefficients in $\GG$; see, e.g.,
\cite[I.3 p.~40]{hirzeboo}.

Relative to the open cover $B \UUU$
of $M$, consider a system of  $\GG$-valued transition functions.
Such a system
takes the form
$\{g_{\sigma_0\subseteq\sigma_1}
\colon U_{\sigma_0\subseteq\sigma_1}\to\GG\}_{\sigma_0 \subseteq \sigma_1}$,
as $\sigma_0 \subseteq \sigma_1$ ranges over injections
of finite subsets of $\Lambda$.
The constraints (T1) -- (T3)
say that this system of transition functions
defines the functor
\begin{equation}
\begin{aligned}
F \colon \MU &\longrightarrow \GG
\\
F \colon \Ob(\MU) &\longrightarrow \{e\} \subseteq \GG
\\
F|_{U_{\sigma_0\subseteq\sigma_1}}=g_{\sigma_0\subseteq\sigma_1} \colon 
& U_{\sigma_0\subseteq\sigma_1}\longrightarrow \GG,
\end{aligned}
\label{functorF}
\end{equation}
and every such functor determines a 
system of  $\GG$-valued transition functions. Hence:

\begin{thm} 
\label{bericatG}
The assignment to a system of $\GG$-transition functions
on $B \UUU$ of the $\GG$-valued functor $F$
on $\MU$ which the assignment {\rm \eqref{functorF}}
characterizes
induces a bijection 
\begin{equation}
\mathrm{H}^1(B\UUU,\GG) \longrightarrow \DDD(\MU,\GG)
\end{equation}
onto the set  $\DDD(\MU,\GG)$ 
(Berikashvili's functor evaluated on $(\MU,\GG)$),
and this bijection  is natural in the data. \qed
\end{thm}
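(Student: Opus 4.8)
The plan is to realize the asserted bijection as the descent to quotients of the cochain-level assignment \eqref{functorF}. As the discussion preceding the theorem records, the constraints (T1)--(T3) are equivalent to functoriality of \eqref{functorF}, and conversely every functor $F\colon\MU\to\GG$ restricts, on the constituents $U_{\sigma_0\subseteq\sigma_1}$ of $\Mor(\MU)$, to a system of $\GG$-valued transition functions on $B\UUU$; thus \eqref{functorF} is already a bijection between systems of $\GG$-transition functions on $B\UUU$ and functors $\MU\to\GG$, before any quotient is taken. Granting this, I would reduce the theorem to a single claim: that this bijection intertwines the equivalence relation on transition functions generated by (E) with the $\Map(\ob_\MU,\GG)$-action on $\mathrm{Funct}(\MU,\GG)$ by natural transformations. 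A bijection of sets satisfying $a\sim b\iff f(a)\sim' f(b)$ automatically descends to a bijection of the quotients $\mathrm{H}^1(B\UUU,\GG)$ and $\DDD(\MU,\GG)$, which is exactly the map the theorem describes.

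The heart of the matter is that the gauge data on the two sides coincide on the nose. Since $\Ob(\MU)=\coprod_{\sigma}U_\sigma$, a member of $\Map(\ob_\MU,\GG)$ is precisely a family $(r_\sigma\colon U_\sigma\to\GG)_\sigma$, which is the datum appearing in (E). I would then unwind the naturality square defining the action of $\Phi=(r_\sigma)$: the morphism of $\MU$ attached to an inclusion $\sigma_0\subseteq\sigma_1$ has source $U_{\sigma_1}$ and target $U_{\sigma_0}$ (the inclusion $\sigma_0\subseteq\sigma_1$ induces $U_{\sigma_1}\hookrightarrow U_{\sigma_0}$), so commutativity of that square forces
\begin{equation*}
g'_{\sigma_0\subseteq\sigma_1}(x)=r_{\sigma_1}(x)^{-1}\,g_{\sigma_0\subseteq\sigma_1}(x)\,r_{\sigma_0}(x),\quad x\in U_{\sigma_1}.
\end{equation*}
This is exactly relation (E) for $B\UUU$, once one matches the index carrying $r^{-1}$ on the left with the source index $\sigma_1$ and the index carrying $r$ on the right with the target index $\sigma_0$. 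Hence two systems are (E)-equivalent if and only if the associated functors lie in a common $\Map(\ob_\MU,\GG)$-orbit, in both directions, and the bijection of quotients follows.

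For the naturality statement I would simply note that every ingredient is functorial in the evident way: a group homomorphism $\GG\to\GG'$, a map of manifolds compatible with the covers, or a refinement of $\UUU$ induces compatible maps on $\mathrm{H}^1(B\UUU,-)$, on $\MU$, and hence on $\mathrm{Funct}(\MU,-)$ and $\DDD(\MU,-)$. Because the correspondence \eqref{functorF} is given by one and the same formula throughout, it commutes with all of these induced maps; this is routine once the underlying bijection is in hand, so I would indicate the argument rather than expand it.

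The main obstacle is bookkeeping of conventions rather than any conceptual difficulty. One must fix consistently: (i) the multiplication order relating the cocycle identity (T1) to composition in $\MU$, where a nested chain $\sigma_0\subseteq\sigma_1\subseteq\sigma_2$ yields $g_{\sigma_0\subseteq\sigma_2}=g_{\sigma_1\subseteq\sigma_2}\,g_{\sigma_0\subseteq\sigma_1}$; (ii) the source/target convention, namely that the smaller open set $U_{\sigma_1}$ is the source; and (iii) the placement of the inverse in (E) against that in the naturality square. A secondary point worth recording is that a system of transition functions on $B\UUU$ is determined by its values on nested pairs $\sigma_0\subseteq\sigma_1$ alone: for arbitrary $\sigma,\tau$ one has $U_\sigma\cap U_\tau=U_{\sigma\cup\tau}$ with $\sigma,\tau\subseteq\sigma\cup\tau$, and (T1)--(T3) express $g_{\sigma,\tau}$ through the nested data, which is precisely what makes the cochain-level correspondence with functors on $\MU$ a genuine bijection.
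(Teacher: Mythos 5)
Your proposal is correct and takes essentially the same route as the paper, whose proof is exactly the discussion preceding the statement (hence the theorem's immediate \textup{q.e.d.}): the cochain-level identification \eqref{functorF} of systems of $\GG$-transition functions on $B\UUU$ with functors $\MU \to \GG$, together with the matching of relation (E) against the $\Map(\ob_{\MU},\GG)$-action by natural transformations, after which the bijection of quotients $\mathrm{H}^1(B\UUU,\GG) \to \DDD(\MU,\GG)$ is automatic. Your explicit unwinding of the naturality square, with the source index $\sigma_1$ carrying $r^{-1}$ consistently with the paper's composition convention and with the reduction of arbitrary pairs $(\sigma,\tau)$ to nested ones via $U_\sigma \cap U_\tau = U_{\sigma\cup\tau}$, merely spells out the bookkeeping the paper leaves implicit.
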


Suppose that $M$ is smooth and that $M$ admits a partition of unity
subordinate to the open cover $B\UUU$ of $M$.
When $M$ is paracompact, such a partition of unity is available.
A classical result says that
two principal $\GG$-bundles on $M$ arising from the same
open cover are isomorphic if and only if
the corresponding systems of $\GG$-transition functions are
equivalent, cf.
 \cite[2.7 Theorem Section 5.2 p.~63]{MR1249482}.
Thus, when  
each member of the open cover $\UUU$ of $M$ is contractible,
 the value  $\DDD(\MU,\GG)$ of Berikashvili's functor
on the pair $(\MU,\GG)$
parametrizes isomorphism classes of principal $\GG$-bundles on $M$.

}
\end{examp}

\begin{rema}
\label{nonabcoh}
{\rm
Theorem \ref{bericatG}
suggests one could view
the value $\DDD(C,A)$ of Berikasvili's functor
in Section \ref{bundles} on the pair $(C,A)$,
the value $\DDD(B,\KK)$ of that functor
in Section \ref{spb} on the pair $(B,\KK)$,
and the value $\DDD(\Cat,\GG)$ of that functor
 on the pair $(\Cat,\GG)$ in the present Section
as a kind of first non-abelian cohomology set.
This is consistent with the interpretation of 
a moduli space of flat connections
as  a first non-abelian cohomology space.

}
\end{rema}

\section{Twisting cochains and Berikashvili's functor in a categorical setting}
\label{tcs}

Let $\mathcal C$ be  a coaugmented
dg $R$-cocategory, with object set $O_{\mathcal C}$ and
with coaugmentation $\eta\colon R[O_{\mathcal C}] \to \mathcal C$,
and let $\mathcal A$ be an augmented dg $R$-category,
with object set $O_{\mathcal A}$ and  
augmentation $\varepsilon \colon \mathcal A \to R[O_{\mathcal A}]$.
A {\em twisting cochain\/}
is  a degree $-1$ morphism 
$t\colon \mathcal C \to \mathcal A$
of graded  $R$-graphs,
subject to the requirements
\begin{equation}
Dt = t \cup t,\ \varepsilon t = 0,\ t \eta = 0.
\label{tc1}
\end{equation} 
We will now unravel the meaning of this definition.
In particular, when $O_{\mathcal C}$ and $O_{\mathcal A}$
consist of a single element,
this notion of twisting cochain comes down to the standard one.

We return to the general case. The values of the diagonal $\Delta$
characterizing the cocategory structure  of $\mathcal C$ 
lie in the appropriate
{\em tensor square\/} of $\mathcal C$ with itself
in the category of differential graded $R[O_{\mathcal C}]$-modules.
This tensor square is the appropriate {\em coend\/} 
$\mathcal C\otimes_{R[O_{\mathcal C}]}\mathcal C$,
that is, $\mathcal C\otimes_{R[O_{\mathcal C}]}\mathcal C$ is the
differential graded $R[O_{\mathcal C}]$-module having the same set of objects $O_{\mathcal C}$ as
$\mathcal C$ and, given the  pair $(x,y)$ of objects,
the $R$-chain complex $(\mathcal C\otimes_{R[O_{\mathcal C}]}\mathcal C)(x,y)$
of arrows from $x$ to $y$ is the sum
\[
\bigoplus_{z \in O_{\mathcal C}}\mathcal C(x,z) \otimes \mathcal C(z,y), 
\]
the tensor product for each object $z$ 
being the ordinary tensor product of $R$-chain complexes.

A {\em homogeneous morphism\/}
$\alpha\colon \mathcal C \to \mathcal A$
of {\em graded \/} $R$-{\em graphs\/}
of a fixed degree $k\in \mathbb Z$ 
consists of (i) a set map
\[
\alpha\colon O_{\mathcal C}=\mathrm{Ob}(\mathcal C) 
\longrightarrow \mathrm{Ob}(\mathcal A)  =O_{\mathcal A}
\]
together with (ii), for each (ordered) pair $(x,y)$ of objects of $\mathcal C$,
a morphism
\[
\alpha_{x,y}\colon \mathcal C(x,y) \longrightarrow \mathcal A(\alpha(x),\alpha(y))
\]
of graded $R$-modules of degree $k$;
we will denote the degree of such a homogeneous morphism $\alpha$ by
$|\alpha|$. Given two homogeneous morphisms
$\alpha,\beta\colon \mathcal C \to \mathcal A$
of graded  $R$-graphs
which coincide on objects, that is,
\begin{equation}
\alpha=\beta\colon \mathrm{Ob}(\mathcal C) \longrightarrow \mathrm{Ob}(\mathcal A),
\label{ob1}
\end{equation}
their {\em cup product\/} $\alpha \cup\beta$
is the homogeneous morphism
$\alpha \cup\beta\colon \mathcal C \to \mathcal A$
of graded $R$-graphs
of degree $|\alpha|+|\beta|$ which, on objects, is given by
\eqref{ob1} and which,
for any (ordered) pair $(x,y)$ of objects of $\mathcal C$,
is given as the composite of the following three morphisms:
\begin{equation}
\begin{CD}
\mathcal C(x,y)
@>{\Delta}>>
\bigoplus_{z \in O_{\mathcal C}}\mathcal C(x,z) \otimes \mathcal C(z,y)
\\
@.
@.
\\
\bigoplus_{z \in O_{\mathcal C}}\mathcal C(x,z) \otimes \mathcal C(z,y)
@>{\alpha_{x,z}\otimes \alpha_{z,x}}>>
\bigoplus_{z \in O_{\mathcal C}} 
\mathcal A(\alpha(x),\alpha(z)) \otimes \mathcal A(\alpha(z),\alpha(y))
\\
@.
@.
\\
\bigoplus_{z \in O_{\mathcal C}}
 \mathcal A(\alpha(x),\alpha(z)) \otimes \mathcal A(\alpha(z),\alpha(y))
@>c>>  \mathcal A(\alpha(x),\alpha(y))
\end{CD}
\label{composition}
\end{equation}
This composition is well defined since,
given $f \in \mathcal C(x,y)$,
the value 
\[
\Delta(f) \in 
\bigoplus_{z \in O_{\mathcal C}}\mathcal C(x,z) \otimes \mathcal C(z,y)
\]
has at most finitely many non-zero components whence,
even though the lower-most arrow in \eqref{composition}
is, perhaps, not well defined when $O_{\mathcal C}$ is infinite,
the evaluation relative to the composition 
$c$ is well defined.

With these preparations out of the way,
a twisting cochain $t \colon \mathcal C \to \mathcal A$
consists of a set map
\[
t\colon \mathrm{Ob}(\mathcal C) \longrightarrow \mathrm{Ob}(\mathcal A)
\]
together with, for each (ordered) pair $(x,y)$ of objects of $\mathcal C$,
a morphism
\[
\alpha_{x,y}\colon \mathcal C(x,y) \longrightarrow \mathcal A(\alpha(x),\alpha(y))
\]
of graded $R$-modules of degree $-1$;
and $t$ is required to satisfy the identities \eqref{tc1}.

The following is an immediate generalization of the corresponding
facts for differential graded algebras and coalgebras:

\begin{prop}
Let $\mathcal C$ be  a coaugmented
dg $R$-cocategory
and $\mathcal A$ an augmented dg $R$-category.
A twisting cochain $t \colon \mathcal C \to \mathcal A$
induces a morphism
\begin{equation}
\overline t \colon \mathcal C \longrightarrow \mathcal B \mathcal A
\end{equation}
of coaugmented dg cocategories and a morphism
\begin{equation}
\overline t \colon \Omega \mathcal C \longrightarrow \mathcal A
\end{equation}
of augmented dg categories.
\end{prop}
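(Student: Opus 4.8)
The plan is to reproduce, in the $R[O]$-enriched setting, the classical correspondence between twisting cochains and morphisms out of the cobar construction (resp.\ into the bar construction) for dg algebras and coalgebras. The only genuinely new feature is bookkeeping: the set map $t\colon O_{\mathcal C}\to O_{\mathcal A}$ underlying the twisting cochain now serves as the object map of the induced (co)functor, and all constructions are taken \emph{over} this map. Throughout, write $\overline{\mathcal A}=\ker(\varepsilon)$ for the augmentation ideal of $\mathcal A$ and $\overline{\mathcal C}=\mathrm{coker}(\eta)$ for the coaugmentation coideal of $\mathcal C$.

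First I would invoke the relevant universal property. As a graded cocategory, $\mathcal B\mathcal A$ is the cofree (cotensor) cocategory over $O_{\mathcal A}$ cogenerated by $s\overline{\mathcal A}$; consequently, for a graded cocategory $\mathcal C$ equipped with an object map $f\colon O_{\mathcal C}\to O_{\mathcal A}$, corestriction to cogenerators sets up a bijection between morphisms $\mathcal C\to\mathcal B\mathcal A$ of graded cocategories over $f$ and degree $0$ morphisms $\mathcal C\to s\overline{\mathcal A}$ of graded $R$-graphs over $f$. Given the twisting cochain $t$, the conditions $\varepsilon t=0$ and $t\eta=0$ say exactly that $t$ factors through $\overline{\mathcal A}$ and annihilates the coaugmentation, so its suspension $st$ is such a corestriction, with underlying object map $f=t$; let $\overline t\colon\mathcal C\to\mathcal B\mathcal A$ be the unique graded cocategory morphism it determines. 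Its length-$1$ and length-$2$ components are then $st$ and $(st\otimes st)\Delta$.

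Next I would verify that $\overline t$ commutes with the differentials. Since $\overline t$ is a morphism of graded cocategories and both $d_{\mathcal C}$ and the bar differential $d_{\mathcal B\mathcal A}$ are coderivations, the map $\psi=\overline t\, d_{\mathcal C}-d_{\mathcal B\mathcal A}\,\overline t$ is a $(\overline t,\overline t)$-coderivation into the cofree cocategory $\mathcal B\mathcal A$, hence is determined by its corestriction $\pr\circ\psi$ to cogenerators. As $d_{\mathcal B\mathcal A}$ has only a length-preserving component (the internal differential of $\mathcal A$) and a length-lowering component (the composition $c$ of $\mathcal A$), feeding in the two components of $\overline t$ recorded above shows that $\pr\circ\psi$ equals $s$ applied to $Dt-t\cup t$, up to the Koszul signs produced by the suspension. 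This vanishes by the defining identity $Dt=t\cup t$, so $\psi=0$ and $\overline t$ is a morphism of dg cocategories; that it is coaugmented follows from $t\eta=0$.

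The cobar statement is strictly dual and I would run it in parallel: $\Omega\mathcal C$ is the free dg category over $O_{\mathcal C}$ generated by $s^{-1}\overline{\mathcal C}$, restriction to generators identifies graded category morphisms $\Omega\mathcal C\to\mathcal A$ over a given object map with degree $0$ morphisms $s^{-1}\overline{\mathcal C}\to\mathcal A$ of graded $R$-graphs, and the desuspension of $t$ determines $\overline t\colon\Omega\mathcal C\to\mathcal A$; the difference $\overline t\, d_{\Omega\mathcal C}-d_{\mathcal A}\,\overline t$ is now a $(\overline t,\overline t)$-derivation out of a free category, so it is pinned down by its restriction to generators, which again evaluates to $\pm(Dt-t\cup t)=0$. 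I expect the main obstacle to be organizational rather than conceptual: one must formulate the cofreeness of $\mathcal B\mathcal A$ and the freeness of $\Omega\mathcal C$ as $R[O]$-enriched universal properties so that \lq over the object map $t$\rq\ is built in, and must control the suspension signs so that the corestriction (resp.\ restriction) computation unambiguously returns the master equation $Dt-t\cup t$. Once these are in place, the argument collapses, exactly as in the single-object case, to the single identity $Dt=t\cup t$.
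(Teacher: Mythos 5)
The paper gives no proof of this proposition at all: it is asserted as an ``immediate generalization'' of the single-object facts, and your argument is precisely the standard argument that assertion points to, written out in the $R[O]$-enriched setting. Your cobar half is complete and unconditional: $\Omega\mathcal C$ really is free as a graded category on $s^{-1}\overline{\mathcal C}$ over the object set $O_{\mathcal C}$, restriction to generators is a bijection with no hypothesis on $\mathcal C$, and your derivation trick pins down $\overline t\, d_{\Omega\mathcal C}-d_{\mathcal A}\,\overline t$ by its restriction to generators, where it evaluates to $\pm(Dt-t\cup t)=0$.

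The bar half, however, contains a step that fails as you state it. The tensor cocategory $\mathcal B\mathcal A$ is \emph{not} cofree against arbitrary graded cocategories: corestriction to cogenerators is a bijection only when the source is cocomplete (conilpotent), i.e.\ exhausted by its coaugmentation filtration, in the sense the paper recalls in Section 2. For any coaugmented cocategory morphism $g\colon\mathcal C\to\mathcal B\mathcal A$ one has $\pr_n g=(\pr_1 g)^{\otimes n}\overline{\Delta}{}^{(n-1)}$, so a $g$ with corestriction $st$ must have word-length-$n$ component $(st)^{\otimes n}\overline{\Delta}{}^{(n-1)}$, and for non-cocomplete $\mathcal C$ these need not vanish for large $n$, so the would-be $\overline t$ need not land in the direct sum $\mathcal B\mathcal A=\bigoplus_n\bigl(s\overline{\mathcal A}\bigr)^{\otimes n}$. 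Concretely, already in the one-object case take $\mathcal C=R\{1,c\}$ with $c$ group-like ($\Delta c=c\otimes c$, degree $0$, zero differential), so that $\overline{\Delta}{}^{(n-1)}(\overline c)=\overline c^{\,\otimes n}\neq 0$ for all $n$, and take $\mathcal A$ the free graded algebra on one generator $a$ of degree $-1$ with $da=a^2$ (consistent, since then $d(a^2)=(da)a-a(da)=0$). Then $t(\overline c)=a$ satisfies $Dt=t\cup t$, $\varepsilon t=0$, $t\eta=0$, yet $(st)^{\otimes n}\overline{\Delta}{}^{(n-1)}(\overline c)=(sa)^{\otimes n}\neq 0$ for every $n$, so \emph{no} morphism $\mathcal C\to\mathcal B\mathcal A$ with corestriction $st$ exists. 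Thus the first half of the proposition needs $\mathcal C$ cocomplete; the paper's statement silently carries the same defect, but since your proof explicitly invokes the universal property, this is the one hypothesis you must add — after which your coderivation computation, including the suspension-sign bookkeeping you defer, goes through exactly as you describe.
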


In this proposition, the notation $\overline t$ is slightly abused.
Under these circumstances,
we will refer to either $\overline t$ as the {\em adjoint\/} of $t$.

Under the present circumstances,
Berikashvili's functor is still defined:
As before, let $\mathcal C$ be  a 
coaugmented dg $R$-cocategory, with object set $O_{\mathcal C}$ and with 
coaugmentation $\eta\colon R[O_{\mathcal C}] \to \mathcal C$, and let 
$\mathcal A$ be an augmented dg $R$-category, with object set 
$O_{\mathcal A}$ and  augmentation 
$\varepsilon \colon \mathcal A \to R[O_{\mathcal A}]$.
Let
\[
\varphi \colon O_{\mathcal C} \longrightarrow O_{\mathcal A}
\]
be a fixed map and let $\mathcal A_{\varphi}$ be the graded $R$-module
of homogeneous morphisms $\mathcal C \to \mathcal A$
of graded $R$-graphs which, on objects, are given by $\varphi$.
The standard Hom-differential $D$ turns
$\mathcal A_{\varphi}$ into an $R$-chain complex and the cup product
turns $\mathcal A_{\varphi}$ into a differential graded algebra, with
unit given by the composite
\begin{equation*}
\begin{CD}
\mathcal C
@>\varepsilon>>
R[O_\mathcal C]
@>{R \varphi}>>
R[O_\mathcal A]
@>{\eta}>>
\mathcal A .
\end{CD}
\end{equation*}
Denote the set of twisting cochains in $\mathcal A_{\varphi}$
by $\mathcal T(\mathcal A_{\varphi})$.
Next
let $G$ be the group of invertible elements of
$\mathcal A_{\varphi}^0$, and define the action 
\[
G \times \mathcal T(\mathcal A_{\varphi})
\longrightarrow \mathcal T(\mathcal A_{\varphi}), \ 
(x,y) \mapsto x * y,\ x \in G, y \in \mathcal T (\mathcal A_{\varphi}),
\]
of $G$
on $\mathcal T(\mathcal A_{\varphi})$,
by means of the formula
\begin{equation}
x *y = x y x^{-1} + (Dx) x^{-1}.
\label{action}
\end{equation}
This is well defined, that is,
given $x \in G$ and $y \in \mathcal T (\mathcal A_{\varphi})$, the value
$x *y$ satisfies the requirements \eqref{tc1} as well.
This is readily seen by a
straightforward calculation relying on the formulas
\[
(Dx) x^{-1}+x Dx^{-1} = 0, \quad
(Dx^{-1}) x +x^{-1} Dx = 0
\]
which, in turn, follow from $x x^{-1} = 1$.
We denote the set 
of orbits $(\mathcal T(\mathcal A_{\varphi})) /G$
by $\DDD(\mathcal A_{\varphi})$.
The assignment to $(\mathcal C,\mathcal A,\varphi)$ of
\begin{equation}
\DDD(\mathcal C,\mathcal A,\varphi)=\DDD(\mathcal A_{\varphi})
\end{equation}
is a functor from the category of
triples of the kind $(\mathcal C,\mathcal A,\varphi)$
to the category of sets.
This functor is the present version of 
{\em Berikashvili\/}'s functor.
The set $\DDD(\mathcal C,\mathcal A,\varphi )$,
i.e.,  Berikashvili's functor $\DDD$, evaluated at 
$(\mathcal C,\mathcal A,\varphi )$,
still ressembles a {\em moduli space of gauge 
equivalence classes of 
connections\/}.

All the previous examples for Berikashvili's functor
can be subsumed under this general version of  Berikashvili's functor.
For example, return to the circumstances of Example \ref{examp1}.
Let  $R\mathcal C_B$ be the corresponding category
enriched in the category of $R$-modules.
Then $\NNN (R\mathcal C_B)$
amounts to the induced simplicial $R[O]$-module
$RN\mathcal C_B$, and the dg cocategory
\begin{equation}
\mathcal BR\mathcal C_B =|\NNN R\mathcal C_B| = |RN\mathcal C_B|
\end{equation}
(where as before $|\,\cdot \,|$ refers to the normalized chain complex functor)
recovers the ordinary dg coalgebra
of normalized $R$-chains  on the simplicial set
$N\mathcal C_B$ (the barycentric subdivision of $B$)
in the following way:
The $R$-chain complex which underlies the dg coalgebra 
$|N\mathcal C_B|$ (arising from the normalized $R$-chains on 
$N\mathcal C_B$) is the direct sum
\[
|N\mathcal C_B| = \bigoplus_{x,y \in O} |RN\mathcal C_B|(x,y),
\]
and the Alexander-Whitney diagonal
\[
\Delta \colon |N\mathcal C_B| \longrightarrow |N\mathcal C_B| \otimes |N\mathcal C_B| 
\] 
is induced by the cocategory diagonal of 
$\mathcal BR\mathcal C_B$.
Given an ordinary augmented dg algebra $A$, 
viewed as an augmented  dg category with a single object,
a twisting cochain
\[
t \colon \mathcal BR\mathcal C_B \longrightarrow A
\]
in the sense of the present discussion
comes down to an ordinary twisting cochain
$|N\mathcal C_B| \to A$, defined on the ordinary
coaugmented dg $R$-coalgebra $|N\mathcal C_B|$
of normalized $R$-chains on the barycentric subdivision $N\mathcal C_B$  
of $B$.

The  general version of Berikashvili's functor deserves further study.
To this end, the appropriate framework is, perhaps, that of
$A_{\infty}$-categories due to Kontsevich and Fukaya:
An  $A_{\infty}$-category is an \lq\lq $A_{\infty}$-algebra 
with more than one object\rq\rq.
More precisely:
A unital $A_{\infty}$-{\em category\/} (over $R$) 
is a unital $A_{\infty}$-algebra in the 
closed monoidal category
$\crg$ 
(category of {\em graphs\/} enriched in the closed monoidal category 
$\chr$ of $R$-chain complexes) \cite{MR1270931}; thus,
a unital $A_{\infty}$-{\em category\/} (over $R$) 
with object set $O$ is
 a unital $A_{\infty}$-algebra
in the category
of 
$R[O]$-chain complexes.
Further,
given two augmented dg categories
$\mathcal A_1$ and $\mathcal A_2$, an $A_{\infty}$-{\em functor\/}
from $\mathcal A_1$ to $\mathcal A_2$
see, e.g., \cite[p.~688]{MR2028075} and the literature there, 
is a twisting cochain from $\mathcal A_1$ to 
$\mathcal A_2$ or, equivalently,
an ordinary (dg) functor from $\Omega\mathcal B A_1$ to $\mathcal A_2$.
Also, the  general version of Berikashvili's functor
is, perhaps, relevant for
the results and observations in
\cite{schlstas}
and
\cite{MR517083}.
\section*{Acknowledgements}

I am indebted to P. Goerss  for some email discussion
and to J. Stasheff for a number of comments.
This work was supported in part  by the Agence Nationale de la Recherche 
under grant ANR-11-LABX-0007-01 (Labex CEMPI).

\bibliographystyle{alpha}

\def\cprime{$'$} \def\cprime{$'$} \def\cprime{$'$} \def\cprime{$'$}
  \def\cprime{$'$} \def\cprime{$'$} \def\cprime{$'$} \def\cprime{$'$}
  \def\dbar{\leavevmode\hbox to 0pt{\hskip.2ex \accent"16\hss}d}
  \def\cprime{$'$} \def\cprime{$'$} \def\cprime{$'$} \def\cprime{$'$}
  \def\cprime{$'$} \def\Dbar{\leavevmode\lower.6ex\hbox to 0pt{\hskip-.23ex
  \accent"16\hss}D} \def\cftil#1{\ifmmode\setbox7\hbox{$\accent"5E#1$}\else
  \setbox7\hbox{\accent"5E#1}\penalty 10000\relax\fi\raise 1\ht7
  \hbox{\lower1.15ex\hbox to 1\wd7{\hss\accent"7E\hss}}\penalty 10000
  \hskip-1\wd7\penalty 10000\box7}
  \def\cfudot#1{\ifmmode\setbox7\hbox{$\accent"5E#1$}\else
  \setbox7\hbox{\accent"5E#1}\penalty 10000\relax\fi\raise 1\ht7
  \hbox{\raise.1ex\hbox to 1\wd7{\hss.\hss}}\penalty 10000 \hskip-1\wd7\penalty
  10000\box7} \def\polhk#1{\setbox0=\hbox{#1}{\ooalign{\hidewidth
  \lower1.5ex\hbox{`}\hidewidth\crcr\unhbox0}}}
  \def\polhk#1{\setbox0=\hbox{#1}{\ooalign{\hidewidth
  \lower1.5ex\hbox{`}\hidewidth\crcr\unhbox0}}}
  \def\polhk#1{\setbox0=\hbox{#1}{\ooalign{\hidewidth
  \lower1.5ex\hbox{`}\hidewidth\crcr\unhbox0}}}

\end{document}